\titleformat{\subsection}[runin]
{\bfseries} {\thesubsection{.}}{0.15cm}{}[.]
\titleformat{\subsubsection}[runin]
{\em}{\thesubsubsection{.}}{0.15cm}{}[.]
\newtheorem{theorem}{Theorem}
\newtheorem{proposition}[theorem]{Proposition}
\newtheorem{lemma}[theorem]{Lemma}
\newtheorem{corollary}[theorem]{Corollary}
\theoremstyle{definition}
\newtheorem{definition}[theorem]{Definition}
\newtheorem{remark}[theorem]{Remark}
\newtheorem{problem}[theorem]{Problem}
\newtheorem{example}[theorem]{Example}
\numberwithin{equation}{section}
\numberwithin{figure}{section}
\newcommand\Ascr{\mathscr{A}}
\newcommand\Cscr{\mathscr{C}}
\newcommand\Escr{\mathscr{E}}
\newcommand\Lscr{\mathscr{L}}
\newcommand\Oscr{\mathscr{O}}
\newcommand\C{\mathbb{C}}
\newcommand\CP{\mathbb{CP}}
\newcommand\N{\mathbb{N}}
\renewcommand\P{\mathbb{P}}
\newcommand\R{\mathbb{R}}
\newcommand\Z{\mathbb{Z}}
\newcommand\igot{\mathfrak{i}}
\renewcommand\igot{\mathfrak{i}}
\newcommand\ggot{\mathfrak{g}}
\renewcommand\imath{\igot}
\newcommand\hra{\hookrightarrow}
\newcommand\lra{\longrightarrow}
\newcommand\longhookrightarrow{\ensuremath{\lhook\joinrel\relbar\joinrel\rightarrow}}
\newcommand\wt{\widetilde}
\newcommand\Oscrnc{\Oscr_\mathrm{nc}}
\begin{document}

\fancyhead[LO]{Holomorphic Legendrian curves in projectivised cotangent bundles }
\fancyhead[RE]{F.\ Forstneri\v c and F.\ L{\'a}russon} 
\fancyhead[RO,LE]{\thepage}

\thispagestyle{empty}

\vspace*{1cm}
\begin{center}
{\bf\LARGE Holomorphic Legendrian curves \\ in projectivised cotangent bundles}

\vspace*{0.5cm}

{\large\bf  Franc Forstneri{\v c} and Finnur L{\'a}russon} 
\end{center}


\vspace*{1cm}

\begin{quote}
{\small
\noindent {\bf Abstract}\hspace*{0.1cm}
We study holomorphic Legendrian curves in the standard complex contact structure on the projectivised 
cotangent bundle $X=\P(T^*Z)$ of a complex manifold $Z$  of dimension at least $2$. We provide a detailed
analysis of Legendrian curves degenerating to vertical curves and obtain several approximation and general 
position theorems. In particular, we prove that any vertical holomorphic curve $M\to X$ from a compact 
bordered Riemann surface $M$  can be deformed to a horizontal Legendrian curve by an arbitrarily small 
deformation. A similar result is proved in the parametric setting, provided that all vertical curves under consideration are 
nondegenerate. Stronger results are obtained when the base $Z$ is an Oka manifold or a Stein manifold with 
the density property.  Finally, we establish basic and 1-parametric h-principles for holomorphic Legendrian 
curves in $X$.

\vspace*{0.2cm}

\noindent{\bf Keywords}\hspace*{0.1cm} complex contact manifold, projectivised cotangent bundle, Legendrian curve, Riemann surface, Stein manifold, Oka principle, h-principle

\vspace*{0.1cm}

\noindent{\bf MSC (2010)}\hspace*{0.1cm} 53D10; 32E30, 32H02, 37J55}

\vspace*{0.1cm}
\noindent{\bf Date \rm 25 September 2018. Minor edits on February 6, 2022.}

\end{quote}


\section{Introduction} 
\label{sec:intro}

A {\em complex contact manifold} is a pair $(X,\xi)$, where $X$ is a complex manifold of 
(necessarily) odd dimension $2n+1\ge 3$ and $\xi$ is a {\em contact subbundle} of 
the holomorphic tangent bundle $TX$, that is, a maximally nonintegrable
holomorphic hyperplane subbundle  of $TX$.  More precisely, the {\em O'Neill tensor}
$O: \xi\times \xi \to TX/\xi$, $(v,w)\mapsto [v,w] \!\!\mod \xi$, is nondegenerate. 
Note that $\xi=\ker\alpha$ where $\alpha$  is a holomorphic $1$-form on $X$ with values in the 
normal line bundle $L=TX/\xi$ realising the quotient projection: 
\[  
	0 \lra \xi \longhookrightarrow TX \stackrel{\alpha}{\lra} L \lra 0.
\] 
The contact condition is equivalent to $\alpha\wedge (d\alpha)^n\ne 0$ at every point of $X$.
As in the real case, a complex contact structure has no local invariants and is locally at any point 
contactomorphic to the standard contact structure on $\C^{2n+1}$ given by the $1$-form
\begin{equation}\label{eq:standard}
	\alpha_{\mathrm{std}} = dz+\sum_{j=1}^n x_j dy_j,
\end{equation}
where $(x,y,z)$ are complex coordinates on $\C^{2n+1}$.
(See Darboux \cite{Darboux1882CRAS}, Moser \cite{Moser1965TAMS}, and 
Geiges  \cite[p.~67]{Geiges2008} for the real case, 
and \cite[Theorem A.2]{AlarconForstnericLopez2017CM} for the holomorphic case.)
A more complete introduction to the subject can be found in the papers
by LeBrun \cite{LeBrun1995IJM},  LeBrun and Salamon \cite{LeBrunSalamon1994IM},
the survey by Beauville \cite{Beauville2011}, and the recent paper by
Alarc\'on and the first named author   \cite{AlarconForstneric2017IMRN}
where a contact neighbourhood theorem has been established 
for any  immersed noncompact Legendrian curve in an arbitrary complex contact manifold.

It is well known that for any complex manifold $Z$ of complex dimension at least two, 
the projectivised holomorphic cotangent bundle  $\P(T^*Z)$ admits a natural complex contact structure.
It is conjectured that, among compact projective manifolds, this is one of only two types of 
manifolds admitting a complex contact structure; see Beauville \cite[Conjecture 6]{Beauville2011}.
The second type is contact Fano manifolds which arise as the unique closed orbit of 
the adjoint action of a simple complex Lie group $G$ on the 
projectivisation $\P(\ggot)$ of the Lie algebra $\ggot$ of $G$. 
We shall not consider manifolds of the second type in this paper. 

Let us  recall the definition of the contact structure on a projectivised cotangent bundle $X=\P(T^*Z)$;
cf.\ Geiges \cite[Lemmas 1.2.2 and 1.2.3]{Geiges2008} for the smooth case. 

A {\em contact element} in a complex manifold $Z$ is a complex hyperplane $\Sigma\subset T_z Z$ 
in a tangent space to $Z$. Such $\Sigma$ is the kernel of a covector $0\ne a \in T^*_z Z$ 
which is  determined up to a nonzero constant multiple, and hence by 
the complex line $\C a \subset T^*_z Z$. This shows that the space of contact elements of $Z$ 
can be naturally identified with the projectivised cotangent bundle $X=\P(T^*Z)$, 
a complex manifold of dimension $2n+1$ if $\dim Z=n+1$. 
Let $\pi\colon T^*Z\to Z$ denote the base projection. Then, $\pi$ induces the projection
$\pi\colon X\to Z$ which is a holomorphic fibre bundle with fibre $\CP^n$.
The cotangent bundle $T^*Z$ carries the tautological holomorphic $1$-form  $\eta$, 
called the {\em Liouville form} (see e.g.\ \cite[p.\ 19]{Geiges2008}) whose value on a tangent vector 
$v\in T_a(T^*Z)$ at a point $a\in T_z^* Z$ equals 
\begin{equation}\label{eq:eta0}
	\langle \eta,v\rangle = \langle a, (d\pi)_a(v)\rangle.  
\end{equation}
Here, $\langle \eta,v\rangle$ is the value of  $\eta$ on the vector $v$, and 
$\langle a, (d\pi)_a(v)\rangle$ is the value of the covector 
$a\in T_z^* Z$ on the vector $(d\pi)_a(v)\in T_z Z$.
The $1$-form $\eta$ is given in any local holomorphic coordinates 
$(z_0,\ldots,z_n)$ on $Z$ and associated fibre coordinates $(\zeta_0,\ldots,\zeta_n)$ on $T_z^*Z$ by 
\begin{equation}\label{eq:eta}
	 \eta= \sum_{j=0}^n \zeta_j dz_j. 
\end{equation}
Taking $(\zeta_0,\ldots,\zeta_n)$ as  homogeneous coordinates on $\P(T_z^*Z)$ 
we get the holomorphic contact structure $\xi=\ker\eta$ on $X=\P(T^*Z)$. 
On the affine chart $\zeta_j=1$ we have
\begin{equation}\label{eq:affine}
	\xi= \ker\bigl(dz_j + \sum_{i\ne j} \zeta_idz_i \bigr);
\end{equation}
this agrees with the standard contact structure \eqref{eq:standard} on $\C^{2n+1}$.
Note that $\P(T^*\C^{n+1})\cong \C^{n+1}\times \CP^{n}$ 
is covered by $n+1$ affine contact charts \eqref{eq:affine}.

We see from \eqref{eq:eta0} that the contact element $\xi_a$ at any point 
$a \in X$ is the unique complex hyperplane in $T_a X$ whose  projection to $T_{\pi(a)}Z$
is the hyperplane $\ker(a) \subset T_{\pi(a)}Z$. Note that the {\em vertical tangent space}
to $X$ at any point $a\in X$ (i.e., the tangent space to the fibre of the projection $X\to Z$)
is a subspace of the contact element $\xi_a$.

Clearly, $X$ is compact and simply connected if and only if $Z$ is.  Then the complex contact structure on $X$ that we have described is in fact the only one, up to a biholomorphism of $X$ isotopic to the identity, by a theorem of LeBrun and Salamon \cite[Proposition 2.2]{LeBrunSalamon1994IM}.

Assume now that $(X,\xi)$ is a complex contact manifold and $M$ is a smooth
manifold. A smooth map $f\colon M\to X$ is said to be 
{\em isotropic}, or an {\em integral submanifold} of the contact structure $\xi$, if 
\[
	df_x (T_x M)\subset \xi_{f(x)} \quad \text{for all $x\in M$}.
\]
This is equivalent to $f^*\alpha=0$ for any local $1$-form $\alpha$ with $\ker\alpha=\xi$.
If $f$ is an immersion at a generic point of $M$, then the contact property of $\xi$ implies 
$\dim_\R M\le 2n$ where $\dim_\C X=2n+1$. Immersed isotropic submanifolds of maximal dimension 
are called {\em Legendrian} and are necessarily complex submanifolds of $X$
(see \cite[Lemma 5.1]{AlarconForstneric2017IMRN}). For this reason, we shall restrict
attention to holomorphic Legendrian maps $M\to X$ from complex manifolds $M$.
In particular, isotropic complex curves in a contact threefold  are Legendrian; this is the case
for $X= \P(T^*Z)$ when $Z$ is a complex surface. 

We denote by $\Lscr(M,X)$ the space of all isotropic holomorphic maps $M\to X$.  
If $M$ is a Riemann surface, elements of $\Lscr(M,X)$ will be called {\em holomorphic Legendrian curves} 
in $X$ regardless of the dimension of $X$. 
Note that every map into a fibre of the projection
$\pi\colon X= \P(T^*Z) \to Z$ is isotropic; we call such maps \emph{vertical}. 
A holomorphic map $f\in \Oscr(M,X)$ is called {\em horizontal}
if the projection $g=\pi\circ f \colon M\to Z$ has discrete fibres;
equivalently, if $g$ is nonconstant on any complex curve in $M$.
If $M$ is a Riemann surface then any map $M\to X$ is either horizontal or vertical.
A more precise description of isotropic submanifolds in $\P(T^*Z)$ is given in Sect.\ \ref{sec:basic}.

The first main focus of the present paper is the analysis of holomorphic Legendrian curves in 
$X=\P(T^*Z)$ in a neighbourhood of vertical curves. We are particularly interested in the
problem of approximating and deforming vertical Legendrian curves
to horizontal ones. In general this is impossible.
For example, if the manifold $Z$ is Brody hyperbolic (i.e., it admits no nonconstant 
holomorphic images of $\C$), then every nonconstant holomorphic curve $\C\to X$ is vertical;
there are plenty of those since the fibre of the projection $\pi:X\to Z$ is $\CP^n$.
Similarly, if $M$ is a compact complex manifold, then a vertical holomorphic map $M\to X_z=\pi^{-1}(z)$
$(z\in Z)$ cannot be approximated by holomorphic maps with nonconstant projection to $Z$ in view
of the maximum principle.

The situation is rather different if $M$ is a compact bordered Riemann surface.
In this case, every vertical holomorphic curve $M\to X=\P(T^*Z)$ 
can be deformed to a horizontal Legendrian curve by an arbitrarily small deformation
(see Proposition \ref{prop:approx1}). 
We actually prove a considerably stronger parametric general position theorem (see  Theorem \ref{th:B}) 
which says that, for a compact locally contractible parameter space $P$ and a 
compact bordered Riemann surface $M$, a continuous family
of holomorphic Legendrian curves $f_p\colon M\to X$, $p\in P$, can be approximated 
by a continuous family of horizontal holomorphic 
Legendrian curves $M\to X$ provided that every vertical map in the
family $f_p$ has nondegenerate vertical component $h_p\colon M\to \P(T^*_z Z)\cong \CP^n$, 
in the sense that $h_p(M)$ is not contained in any proper projective subspace 
of $\CP^n$.  Moreover, the deformation may be kept fixed on any compact subset $Q\subset P$ 
such that the curve $f_p$ is horizontal for every $p\in Q$. 

We do not know whether Theorem \ref{th:B} holds in the presence of degenerate vertical maps in 
the family $\{f_p\}_{p\in P}$.  The analytic difficulties that appear 
in the degenerate case are reminiscent of those in the theory of conformal minimal 
surfaces in Euclidean spaces $\R^n$, $n\ge 3$, where {\em flat surfaces} (i.e., those lying in 
affine $2$-planes in $\R^n$) seem to be singular points of the space of all conformal minimal surfaces 
(see \cite{AlarconForstnericLopez2019JGEA,ForstnericLarusson2019CAG}).

Assume now that $M$ is an open Riemann surface. We have already remarked
that it is in general impossible to approximate a vertical holomorphic map $M\to X=\P(T^*Z)$ 
by horizontal ones. However, assuming that $Z$ is an Oka manifold
(see \cite[Sect.\ 5.4]{Forstneric2017E} for the definition and main properties
of this class of complex manifolds), 
we show that every vertical holomorphic map can be approximated uniformly on compacts
in $M$ by horizontal Legendrian maps (see Corollary \ref{cor:approx2}).
Furthermore, if $Z$ is a Stein manifold with the density property 
(see \cite[Definition 4.10.1]{Forstneric2017E} and note that
every such manifold is Oka, cf.\  \cite[Proposition 5.6.23]{Forstneric2017E}), then $\P(T^*Z)$  
admits plenty of {\em proper} holomorphic Legendrian curves from any open Riemann surface
(see Corollary \ref{cor:approx2}), and also isotropic Stein submanifolds of higher dimension
when $\dim Z>2$ (see Sect.\ \ref{sec:Stein}).

Another interesting phenomenon is the following. Given a complex surface $Z$ and a family 
$f^\epsilon : M \to \P(T^*Z)$ of horizontal Legendrian holomorphic curves depending 
continuously on a real (or complex) parameter $\epsilon$, a branch point of the projection 
$g^\epsilon =\pi\circ f^\epsilon : M\to Z$ is stable (see Proposition \ref{prop:singularity}). 
In particular, if $g^\epsilon :M\to Z$ is a continuous family of nonconstant holomorphic maps
such that $g^\epsilon $ is an immersion for $\epsilon\ne 0$ but $g^0$ 
has a branch point, then the family of their Legendrian liftings $M\to \P(T^*Z)$
is discontinuous at $\epsilon=0$. 
This causes considerable technical difficulties in the proofs of our main results. 
When constructing horizontal Legendrian approximations of vertical curves (see Proposition \ref{prop:approx1}, 
Remark \ref{rem:approx1}, and Theorem \ref{th:B}), we pay special attention to preserving their fibre 
components, thereby ensuring the existence of a homotopy of Legendrian curves connecting the original 
vertical curves to their horizontal approximants.

Building on our analytic results, we establish basic and 1-parametric h-principles for closed holomorphic Legendrian curves in the projectivisation $X$ of the cotangent bundle of a complex manifold $Z$ (see Theorem \ref{th:better-h-principle}).  By a {\em closed} holomorphic curve in $X$, we mean the germ of a holomorphic map into $X$ from an open neighbourhood of a compact bordered Riemann surface $M$, sitting as a smoothly bounded compact domain in an ambient surface.  

We prove the basic h-principle that {\em every closed holomorphic curve $f:M\to X$ can be deformed to a closed holomorphic Legendrian curve} $h:M\to X$ (see Theorem \ref{th:basic-h-principle}).  The basic h-principle also holds for open and bordered curves.  If we restrict ourselves to {\em strong immersions} $f$, meaning that the composition $\pi\circ f$ by the projection $\pi:X\to Z$ is an immersion (so $f$ itself is also an immersion), then we can prove that {\em the Legendrian curve $h$ is unique up to homotopy}.  In fact, when $\dim Z\geq 3$, we prove the following 1-parametric h-principle:  A continuous path through strongly immersive closed holomorphic curves joining two Legendrian curves can be deformed, with fixed end points, to a path through Legendrian curves.  This fails in general if $\dim Z=2$.

\begin{remark}
Closely related to the class of projectivised cotangent bundles is another natural class of complex contact manifolds.  Let $\Sigma$ be a complex manifold and denote by $J^1 \Sigma$ the manifold of $1$-jets of holomorphic functions on $\Sigma$. A point in $J^1\Sigma$ is a triple $(q,p,z)$ with $q\in \Sigma$, $p\in T^*_q\Sigma$, and $z\in \C$, and $J^1\Sigma$ may be canonically identified with $T^*\Sigma\times \C$.  On $T^*\Sigma$ we have the tautological $1$-form $pdq$, whose differential $d(pdq)= dp \wedge dq$ is the standard symplectic form on $T^*\Sigma$.  The $1$-form $\alpha=dz+pdq$ defines the standard contact structure $\xi=\ker\alpha$ on $J^1\Sigma$.

Note that $(J^1\Sigma,\xi)$  is a domain in the projectivised cotangent bundle $X=\P(T^*Z)$ of the manifold $Z=\Sigma\times \C$. Indeed, using coordinates $(q,p)$ on $T^*\Sigma$ and $(z,\zeta)$ on $T^*\C=\C\times \C$, the Liouville form on $T^*Z$ is $\zeta dz+p dq$, which coincides with \eqref{eq:eta}.  Taking $\zeta=1$, we get $dz+pdq$, which is the contact form \eqref{eq:affine} on $J^1\Sigma$, considered as an affine chart in $\P(T^*Z)$.  See \cite[Sect.\ 6.5]{CieliebakEliashberg2012} for more details.
\qed\end{remark}

%
%
\section{Basic results on isotropic submanifolds of $\P(T^*Z)$} 
\label{sec:basic}

We begin this preparatory section with a more precise geometric description of isotropic complex 
submanifolds in the projectivised cotangent bundle $X=\P(T^*Z)$ of a complex manifold $Z$. 
The main observations 
are summarised in Proposition \ref{prop:summary} for future reference. 

Let $\pi\colon X\to Z$ denote the base projection; this is a holomorphic fibre bundle
with fibre $\CP^n$, where $\dim Z=n+1\ge 2$. 
Recall that $\Lscr(M,X)$ denotes the space of all isotropic holomorphic maps $M\to X$
from a connected complex manifold $M$. In Sect.\ \ref{sec:intro} we have already 
introduced the notion of a {\em vertical} map $M\to X$, i.e., a map with values in a fibre 
$X_{z}=\pi^{-1}(z)$ of $\pi$, and we have noted that every such map is isotropic.
On the other hand, a map $f\in \Oscr(M,X)$ is  {\em horizontal}
if the projection $g=\pi\circ f \colon M\to Z$ has discrete fibres;
equivalently, if $g$ is nonconstant on every complex curve in $M$.

Let us now consider the problem of finding isotropic liftings $f:M\to X$ of a holomorphic map
$g:M\to Z$. It follows from the definition of the contact structure $\xi$ on $X$
(see \eqref{eq:eta}) that such a lifting corresponds to a 
choice of a hyperplane field $\Sigma_x\subset T_{g(x)}Z$, depending holomorphically 
on $x\in M$, such that
\begin{equation}\label{eq:hyperplanes}
	dg_x(T_x M) \subset \Sigma_x,\qquad x\in M.
\end{equation}
Assume that $U$ is an open subset of $M$ whose image $g(U)\subset Z$ lies in 
a coordinate chart $V\subset Z$. In holomorphic coordinates $(z_0,\ldots,z_n)$ 
on $V$ and associated fibre coordinates $(\zeta_0,\ldots,\zeta_n)$ on $T^*V$, 
an isotropic lifting of a map $z:U\to V$ is determined by a holomorphic
map $\zeta=[\zeta_0:\ldots:\zeta_n]:U\to \CP^n$ satisfying the equation   (cf.\ \eqref{eq:eta})
\begin{equation}\label{eq:lifting}
	  \sum_{j=0}^n \zeta_j(x) dz_j(x)=0,\qquad x\in U.
\end{equation}
The space $\Escr$ of germs of holomorphic solutions $(\zeta_0,\ldots,\zeta_n)$ 
of \eqref{eq:lifting} is a coherent analytic sheaf on $M$ (the sheaf of relations between the 
differentials $dz_j$). Although our considerations are local, 
it is easily seen that the sheaf $\Escr$ is well defined globally on $M$.

If $g:M\to Z$ is an immersion, then the sheaf $\Escr$ is clearly locally free, i.e., the sheaf of sections
of a holomorphic vector bundle $E\to M$ of rank $n+1-m$ where $m=\dim M$. Note that $E$ 
is in a natural way a holomorphic vector subbundle of the pullback bundle $g^*(T^*Z)$,
called the {\em conormal bundle} of the immersion $g$. (Likewise,
$\Escr$ is called the {\em conormal sheaf} of the map $g$ even if it is not an immersion.)
The projectivised bundle $\P E$ is then a holomorphic subbundle of $g^*(\P\, T^*Z)$ with
fibre $\CP^{n-m}$, and isotropic liftings of $g$ are in bijective correspondence with sections
$M\to \P E$. Note that every nowhere vanishing holomorphic section of $\P E$
over a contractible Stein neighbourhood $U\subset M$ lifts to a nowhere vanishing 
holomorphic section of $E|_U$ by the Oka-Grauert principle.
This means that any isotropic lifting $U\to X$ of $z=g|_U:U\to Z$ is determined
by a holomorphic map $\zeta:U\to \C^{n+1}_*=\C^{n+1}\setminus\{0\}$ satisfying \eqref{eq:lifting}.
Of course this need not hold globally.

If $\dim M=n=\dim Z-1$ and $g:M\to Z$ is an immersion, then $E$ is a line bundle 
and $\P E$ is a rank $0$ bundle, so  $g$ lifts to a unique Legendrian 
holomorphic map $f:M\to X$. Geometrically, for any $x\in M$, the fibre component 
of the point $f(x)\in X$ is the complex hyperplane (contact element) 
$dg_x(T_x M)\subset T_{g(x)}Z$. More generally, a holomorphic map $g\colon M\to Z$ 
lifts to a unique Legendrian map  at all immersion points as explained above, 
but the lifting may have points of indeterminacy at branch points of $g$.

If $\dim M\le \dim Z-2$ then an immersion $g\colon M\to Z$ need 
not lift to an isotropic holomorphic map $f\in \Lscr(M,X)$ (since the bundle
$\P E\to M$ need not admit any global holomorphic sections), 
and if it does, the lifting need not be unique. 

Let us consider more closely the special case of particular interest to us when $M$ is a 
connected Riemann surface. In this case there are only two types of Legendrian maps 
$M\to X=\P(T^*Z)$, namely vertical and horizontal. 

If $\dim Z=2$, then every nonconstant holomorphic curve $g\colon M\to Z$ lifts to a unique 
horizontal holomorphic Legendrian curve $f\colon M\to X$, given in local coordinates by
\begin{equation}\label{eq:liftingcurve}
	F(x)=\bigl(g_0(x),g_1(x), [-(dg_1)_x : (dg_0)_x]\bigr),\qquad x\in M.  
\end{equation}
Note that the fibre component of $f(x)$ is well defined also at points where both differentials vanish.
The map $F$ is an immersion if and only if $[-dg_1:dg_0]$ is an immersion 
at every branch point of $g$.  Let us analyse this condition more explicitly. 
Using a holomorphic coordinate $x$ on $M$ 
centred at a branch point $x_0$ of $g$, we have $g_0'=a(x)x^j$ and
$g_1'=b(x) x^k$ where $a,b$ are holomorphic functions 
with $a(0)\ne 0,\, b(0)\neq 0$ and $j,k\geq 1$, say $j\leq k$.  
If $\lvert j-k\rvert=1$ then $[-g_1':g_0']=[-{\tfrac b a}x:1]$ is an immersion at $x=0$, 
but if $\lvert j-k\rvert\geq 2$ then it is not.  
If $j=k$ then $[-g_1':g_0']$ is an immersion if and only if $x_0$ is not a critical point of 
the function $b/a$. If $g\colon M\to Z$ is an immersion, then the lifting $F \colon M\to X$ 
is injective if and only if $g$ has distinct tangents at every self-intersection point of its image, that is, 
if $g(x_0)=g(x_1)=z\in Z$ for a pair of distinct points $x_0,x_1\in M$, 
then the complex lines in $T_z Z$ spanned by the vectors $g'(x_0)$ and $g'(x_1)$ are different.

Note that a field of hyperplanes satisfying \eqref{eq:hyperplanes}
always exists if $M$ is a connected open Riemann surface, $Z$ is an arbitrary complex
manifold of dimension $n+1\ge 2$, and $g\colon M\to Z$ is a
nonconstant holomorphic map. Indeed, the tangent cone of $g$ 
at each point is one-dimensional and the conormal sheaf $\Escr$ is locally free. 
This is obvious at an immersion point of $g$. At a branch point $x_0$ of $g$, 
choose a local holomorphic coordinate $x$ on $M$ with $x(x_0)=0$ and local holomorphic coordinates 
$z=(z_0,\ldots,z_n)$ on $Z$ around $g(0)\in Z$,  
with $z(g(0))=0$. Then, $g_j(x)=a_j x^{k_j}+ O(x^{k_j+1})$ with $a_j\ne 0$ and $k_j\ge 2$
for $j=0,\ldots,n$. Applying a linear change of the $z$ coordinates
if necessary, we may assume that $k_j>k_0$ for all $j=1,\ldots, n$.
Then, the tangent cone to $g$ at $x_0=0$ is the line $\C\times \{0\}^{n}$ 
(the $z_0$-axis). After dividing all components $g'_j(x)$ by $x^{k_0-1}$, the equation
\eqref{eq:lifting} is 
\[
	\zeta_0(x)(a_0 k_0 + O(x)) + \sum_{j=1}^n \zeta_j(x) (a_j k_j x^{k_j-k_0-1}+ O(x^{k_j-k_0})) =0.
\]
Since the coefficient $a_0 k_0 + O(x)$ is nonvanishing at $x=0$,
the equation can be solved on $\zeta_0$ for every choice of functions
$\zeta_1,\ldots,\zeta_n$ in a neighbourhood of $0$, thereby proving our claim.
Hence, $\Escr$ determines  a holomorphic hyperplane subbundle $E$ of $g^*(T^* Z)$.
Since every holomorphic vector bundle on an open Riemann surface is trivial
\cite[Theorem 5.3.1]{Forstneric2017E},
we infer that every nonconstant holomorphic map $g:M\to Z$ admits an isotropic
lifting $M\to X=\P(T^* Z)$, and the lifting is unique if and only if $\dim Z=2$.

We summarise the preceding discussion in the following proposition. 

%
%
\begin{proposition}\label{prop:summary}
Let $Z$ be a connected complex manifold of dimension $n+1\ge 2$, and let $X=\P(T^*Z)$ 
be the projectivised cotangent bundle of $Z$ with the standard complex contact structure.
Assume that $M$ is a connected complex manifold of dimension $k\in\{1,\ldots,n\}$.
\begin{enumerate}[\rm (a)]
\item If $k=n$, then every holomorphic immersion $g\colon M\to Z$ lifts to a unique
holomorphic Legendrian immersion $f\colon M\to X$.
\vspace{1mm}
\item If $k=n=1$ (i.e., $M$ is a Riemann surface and $Z$ is a complex surface),
then every nonconstant holomorphic map $g\colon M\to Z$ lifts to a unique
holomorphic Legendrian curve $f\colon M\to X$. 
The map $f$ is an immersion if and only if $[-dg_1:dg_0]:M\to \CP^1$ 
is an immersion at every branch point of $g$ (see \eqref{eq:liftingcurve}). 
If $g$ is an immersion with normal crossings then $f$ is an embedding.
\vspace{1mm}
\item If $1\le k<n$, then a holomorphic immersion $g\colon M\to Z$ lifts 
to a holomorphic isotropic map $f\colon M\to X$ if and only if there exists
a field of complex hyperplanes $\Sigma_x\subset T_{g(x)}Z$ depending holomorphically 
on $x\in M$ and satisfying \eqref{eq:hyperplanes}. Every isotropic lifting of $g$
corresponds to such a hyperplane field. 
\vspace{1mm}
\item If $M$ is an open Riemann surface, then every nonconstant holomorphic map $M\to Z$ 
lifts to a holomorphic Legendrian curve $M\to X$. The lifting is unique if and only if $\dim Z=2$,
and in this case the lifting exists also if $M$ is a compact Riemann surface without boundary.
\end{enumerate}
\end{proposition}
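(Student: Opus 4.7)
The plan is to organise the observations from the discussion preceding the statement into a formal case-by-case argument, built around the conormal sheaf $\Escr\subset g^*(T^*Z)$. Locally, in coordinates $(z_0,\ldots,z_n)$ on $Z$, $\Escr$ is the sheaf of germs of holomorphic solutions $(\zeta_0,\ldots,\zeta_n)$ of the equation \eqref{eq:lifting}; it is coherent and globally well defined on $M$, and isotropic liftings of $g$ correspond bijectively to holomorphic sections of the associated projectivisation. Thus the whole proposition reduces to (i) showing that $\Escr$ is the sheaf of sections of a holomorphic vector bundle $E\subset g^*(T^*Z)$ of the expected rank, and (ii) studying sections of $\P E\to M$.

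First I verify local freeness of $\Escr$. At an immersion point of $g$ the rank of $dg_x$ is $k$, so the annihilator has dimension $n+1-k$; hence $E$ is a subbundle of $g^*(T^*Z)$ of rank $n+1-k$, and $\P E$ has fibre $\CP^{n-k}$. When $M$ is a Riemann surface and $x_0$ is a critical point of $g$, I carry out the local calculation already indicated in the text: after a linear change in the $z$-coordinates one arranges $k_0<k_j$ for $j\ge 1$ in the leading-order expansions $g_j(x)=a_j x^{k_j}+O(x^{k_j+1})$; then the coefficient of $\zeta_0$ in \eqref{eq:lifting} (after dividing through by $x^{k_0-1}$) is a unit, so the equation solves uniquely for $\zeta_0$ as a holomorphic function of arbitrary $(\zeta_1,\ldots,\zeta_n)$. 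Thus $\Escr$ is locally free of rank $n$ even at critical points.

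Given local freeness, the four cases follow directly. For (a), $E$ is a line bundle, so $\P E$ has fibre a single point and admits a unique section; the resulting $f$ is an immersion because $\pi\circ f=g$ is. For (b), the same reasoning applies at all points (including critical points, by the computation above), and the explicit formula \eqref{eq:liftingcurve} gives $f$; the immersion criterion in terms of $[-dg_1:dg_0]$ follows from the leading-term analysis with $g'_0=a(x)x^j$, $g'_1=b(x)x^k$ sketched in the text, and the embedding assertion follows because at a self-intersection $g(x_0)=g(x_1)$ the two fibre components of $f$ are exactly the tangent lines of $g$ at $x_0$ and $x_1$, which are distinct under the normal-crossings hypothesis. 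Case (c) is a direct translation of the bijection ``sections of $\P E\leftrightarrow$ hyperplane fields satisfying \eqref{eq:hyperplanes}.'' For (d), $E$ is a rank-$n$ holomorphic vector bundle on an open Riemann surface, hence trivial by the Oka--Grauert principle \cite[Theorem 5.3.1]{Forstneric2017E}, so $\P E\cong M\times \CP^{n-1}$ admits many global sections; uniqueness holds exactly when the fibre $\CP^{n-1}$ is a point, i.e.\ $n=1$, and in that case $\P E$ has rank $0$ and the argument gives a (necessarily unique) global section regardless of whether $M$ is open or a compact Riemann surface without boundary.

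The main obstacle is the local freeness of $\Escr$ at critical points of $g\colon M\to Z$ in the Riemann-surface case; everything else is essentially formal, being either a statement about the rank-$0$ projective bundle or an application of Grauert's triviality theorem. Without the coordinate normalisation $k_0<k_j$, one cannot a priori rule out a torsion contribution to $\Escr$ at $x_0$, which would obstruct both the uniqueness in (b) and the existence in (d). Once this local input is secured, the proposition assembles cleanly from the preceding discussion.
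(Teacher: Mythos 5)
Your proposal is correct and takes essentially the same route as the paper: the proposition is stated there as a summary of the discussion immediately preceding it, and you have organised that discussion into the same argument, with the conormal sheaf $\Escr$, the verification of its local freeness at critical points via the coordinate normalisation $k_0<k_j$, and the identification of isotropic liftings with sections of $\P E$ serving as the backbone. Your observation that $f$ is automatically an immersion in case (a) because $\pi\circ f=g$ is one is a small but correct completion of a point left implicit in the paper.
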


The following natural example deserves to be mentioned; compare with 
Bryant \cite[Theorem G]{Bryant1982JDG} concerning Legendrian embeddings into $\CP^3$.

%
%
\begin{example}\label{ex:CP2}
Every Riemann surface $M$ (either compact or open) admits a holomorphic Legendrian 
embedding into $\P(T^* \CP^2)$. 

Indeed, it is classical that every such $M$ admits a holomorphic immersion 
$M\to \CP^2$ with simple double points (see e.g.\ \cite{GriffithsHarris1994}), 
and it suffices to take its unique Legendrian lifting 
to $X=\P(T^* \CP^2)$ furnished by Proposition \ref{prop:summary} (d). 

Note that $\P(T^*\CP^2)$ is birationally equivalent to $\CP^3$;
however, $\CP^3$ is not the projectivised cotangent bundle of any manifold
(see Boothby \cite{Boothby1962}). 
Recall that $\CP^3$ admits a unique holomorphic contact structure (up to isotopy), induced 
by the $1$-form $\alpha=z_0dz_1-z_1dz_0 + z_2dz_3-z_3dz_2$ on $\C^4$. 
In 1982, Bryant showed that every compact Riemann surface $M$ admits 
a holomorphic Legendrian embedding into $\CP^3$ of the form
\begin{equation}\label{eq:CP3}
	[1: f-\frac{1}{2}g (df/dg) : g : \frac{1}{2}(df/dg)]
\end{equation}
for suitably chosen meromorphic functions $f$ and $g$ on $M$
(see \cite[Theorems F and G]{Bryant1982JDG}). Comparing with the formula
\eqref{eq:liftingcurve} for a Legendrian curve in  $\P(T^*\CP^2)$, which can be written as
%
%
$[1:f:g: -df/dg]$, we see that the curve \eqref{eq:CP3} is obtained from it 
by the quadratic transformation $(x,y,z)\mapsto (x+yz/2,y,-z/2)$ which is an
automorphism on the affine part $\C^3$ of both manifolds.
\qed\end{example}

We wish to point out that the operation of taking the (unique) Legendrian lifting of a nonconstant
holomorphic curve $g\colon M\to Z$ in Proposition \ref{prop:summary} (d) 
may not depend continuously on $g$ at points where the differential  $dg$ vanishes. 
In local coordinates $(z_0,z_1)$ on the surface $Z$ we have $g(t)=(z_0(t),z_1(t))$,
and  the vertical component  equals 
$[-\dot z_1(t):\dot z_0(t)]\in \CP^1$ (see \eqref{eq:liftingcurve}). 
Since the map $\C^2\ni (w_0,w_1)\mapsto [-w_1:w_0]\in\CP^1$
has a point of indeterminacy at $(w_0,w_1)=(0,0)$, this may cause discontinuity of liftings
at branch points of $g$. We have the following more precise observation.

\begin{proposition}\label{prop:singularity}
Assume that $U\subset \C$ is a connected open neighbourhood of $0\in\C$ and
$g^\epsilon = (g^\epsilon_0, g^\epsilon_1):U\to\C^2$ is a family of nonconstant holomorphic
maps, depending continuously on $\epsilon\in\R$ in a neighbourhood of $0\in\R$, 
such that $(g^0)'(0)=(0,0)$. If the meromorphic map
$h^\epsilon=[-(g^\epsilon_1)':(g^\epsilon_0)'] : U\to \CP^1$ 
depends continuously on $\epsilon$,
then for every $\epsilon$ sufficiently close to $0$ there is a point $z=z_\epsilon\in U$ 
close to $0$ such that $(g^\epsilon)'(z_\epsilon)=0$. 
\end{proposition}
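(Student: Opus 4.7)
The plan is to argue by contradiction. First I would suppose there is a sequence $\epsilon_n\to 0$ in $\R$ such that for each $n$, the map $g^{\epsilon_n}$ has no critical point in a fixed closed disk $\overline D=\{|z|\le r\}\subset U$ centred at $0$. Since $g^0$ is nonconstant, at least one of $(g^0_0)'$, $(g^0_1)'$ is not identically zero; after possibly swapping the two components of $g^\epsilon$ (which only swaps the homogeneous coordinates of $h^\epsilon$ and does not affect any hypothesis), I could assume $(g^0_0)'\not\equiv 0$ and
\[
k:=\mathrm{ord}_{z=0}\,(g^0_0)'\le \mathrm{ord}_{z=0}\,(g^0_1)'=:l,
\]
with the convention that the order of the zero function is $\infty$. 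The hypothesis $(g^0)'(0)=(0,0)$ forces $k,l\ge 1$ with $k<\infty$.

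The crux is to compute $h^0(0)\in\CP^1$. Writing $(g^0_0)'(z)=z^ka(z)$ with $a(0)\ne 0$ and, when $l<\infty$, $(g^0_1)'(z)=z^lb(z)$ with $b(0)\ne 0$, I would find $h^0(0)=[0:1]$ when $l>k$ (including $l=\infty$), and $h^0(0)=[-b(0):a(0)]$ when $l=k$. The common feature I need is $h^0(0)\ne[1:0]$.

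Next I would shrink $r$ so that $(g^0_0)'$ has no zeros in $\overline D\setminus\{0\}$. Continuous dependence of $(g^\epsilon_0)'$ on $\epsilon$ (via Cauchy's formula applied to the given continuous dependence of $g^\epsilon$) together with Hurwitz's theorem yields, for $n$ large, that $(g^{\epsilon_n}_0)'$ has exactly $k\ge 1$ zeros in $\{|z|<r\}$, all converging to $0$ as $n\to\infty$. Pick such a zero $w_n$; since $g^{\epsilon_n}$ has no critical point in $\overline D$, necessarily $(g^{\epsilon_n}_1)'(w_n)\ne 0$, whence
\[
h^{\epsilon_n}(w_n)=\bigl[-(g^{\epsilon_n}_1)'(w_n):(g^{\epsilon_n}_0)'(w_n)\bigr]=\bigl[-(g^{\epsilon_n}_1)'(w_n):0\bigr]=[1:0].
\]
The assumed continuous dependence of $h^\epsilon$ on $\epsilon$, uniformly on $\overline D$ in the Fubini--Study distance, combined with $w_n\to 0$ and continuity of $h^0$, forces $h^{\epsilon_n}(w_n)\to h^0(0)$, contradicting $h^{\epsilon_n}(w_n)=[1:0]$ and $h^0(0)\ne[1:0]$. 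The main point to watch is the choice of component at the start: had one begun with $l<k$ instead, the forced value at the Hurwitz zero would be $[1:0]=h^0(0)$ and no contradiction would follow, which is exactly why the initial swap is essential.
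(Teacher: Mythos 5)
Your proof is correct and takes essentially the same approach as the paper's: both identify that $h^0(0)\ne[1:0]$ after a suitable normalisation (you by a coordinate swap with a two-case analysis; the paper by a linear change making the tangent cone the $z_0$-axis), then use Hurwitz/the argument principle to produce zeros of $(g^\epsilon_0)'$ near $0$, and finally invoke joint continuity of $h^\epsilon$ to force $(g^\epsilon_1)'$ to vanish there as well. Framing the argument as a proof by contradiction is a cosmetic difference.
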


It follows that, given a continuous family $g^\epsilon :M\to Z$ of nonconstant holomorphic maps
into a complex surface $Z$ such that $g^\epsilon $ is an immersion for $\epsilon\ne 0$ but $g^0$ 
has a branch point, the family of their Legendrian liftings $f^\epsilon : M\to \P(T^*Z)$
is discontinuous at $\epsilon=0$. 

\begin{proof}
We may assume that $g^0(0)=(0,0)$ and the tangent cone of $g^0$ at $0$ equals $\C\times \{0\}$.
This means that there are integers $2\le k<m$ and numbers $a,b\in\C\setminus \{0\}$ such that
\[	
	g^0_0(z)=a z^k + O(z^{k+1}),\qquad g^0_1(z)=bz^m + O(z^{m+1}).
\]
Hence, 
\[
	h^0(z)=[-mbz^{m-1}+ O(z^{m}): kaz^{k-1}+ O(z^{k})].
\]
Dividing both components by $z^{k-1}$ and letting $z\to 0$ we obtain $h^0(0)=[0:1]\in\CP^1$. 

Pick $r>0$ such that the closed disc $D_r=\{|z|\le r\}$ is contained in $U$ and
$z=0$ is the only zero of the derivative $(g^0_0)'$ on $D_r$. Hence, there is $\epsilon_0>0$ 
such that for every $\epsilon \in [-\epsilon_0,\epsilon_0]$ the function $(g^\epsilon_0)'$ has no zeros 
on $bD_r$. By the argument principle, each of these functions has precisely  
$k-1>0$ zeros on $\mathring D_r$ counted with multiplicities. 
Decreasing $r>0$ and $\epsilon_0>0$ if necessary, the continuity of the map 
$h^\epsilon = [-(g^\epsilon_1)':(g^\epsilon_0)'] : D_r \to \CP^1$ implies that it 
does not assume the value $[1:0]$ for any $\epsilon \in [-\epsilon_0,\epsilon_0]$.
It follows that $(g^\epsilon_1)'$ vanishes at each point $z\in D_r$ where $(g^\epsilon_0)'$
vanishes (to the same or higher order), so the map $g^\epsilon$ has $k-1$ branch points  on $D_r$
counted with multiplicities.
\end{proof}

%
%
%
%
\section{Isotropic Stein submanifolds in projectivised cotangent bundles}\label{sec:Stein}

So far we have mainly focused on Legendrian curves in projectivised cotangent bundles $X=\P(T^*Z)$.
In this section we indicate some results on the existence of isotropic holomorphic maps
from higher dimensional Stein manifolds. (Recall that Stein manifolds of dimension 1 are
precisely the open Riemann surfaces.) 

\begin{proposition}\label{prop:liftingStein}
(Assumptions as above.) 
Let $M$ be a Stein manifold and $Z$ be a complex manifold.
If $\dim Z=\dim M+1$ or $\dim Z > \left[\tfrac 3 2 \dim M\right]$, then every holomorphic immersion 
$g\colon M\to Z$ lifts to a holomorphic isotropic immersion $f\colon M\to X$.
\end{proposition}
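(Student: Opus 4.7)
The plan is to reduce the problem to the existence of a holomorphic line subbundle of a particular vector bundle on $M$. By the discussion preceding Proposition \ref{prop:summary}, the conormal sheaf of the immersion $g \colon M \to Z$ is locally free of rank $r = n+1-m$, where $n+1 = \dim Z$ and $m = \dim M$, and determines a holomorphic vector subbundle $E \subset g^*(T^*Z)$. Holomorphic isotropic liftings $f \colon M \to X$ of $g$ correspond bijectively to holomorphic line subbundles $L \subset E$, equivalently to holomorphic sections of $\P E \to M$, and any such lift $f$ is automatically an immersion since $\pi \circ f = g$ is one.

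In the case $\dim Z = \dim M + 1$ we have $r = 1$, so $E$ itself is the required line subbundle; this is Proposition \ref{prop:summary}(a).

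Assume henceforth $\dim Z > [\tfrac{3}{2}\dim M]$. A short parity check in $m$ converts this hypothesis into $2r = 2(n+1-m) > m$. By the Andreotti--Frankel theorem, the Stein manifold $M$ has the homotopy type of a CW complex of real dimension at most $m$. Since $\C^r \setminus \{0\}$ is homotopy equivalent to the $(2r-2)$-connected sphere $S^{2r-1}$ and $m < 2r$, standard obstruction theory produces a continuous nowhere-zero section of $E$. As every short exact sequence of continuous complex vector bundles splits, $E$ decomposes topologically as a direct sum of a trivial line bundle and a continuous rank-$(r-1)$ bundle $F$.

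By Grauert's Oka--Grauert classification, $F$ corresponds to a unique holomorphic vector bundle $F'$ on $M$ of the same topological type, and the holomorphic bundles $E$ and $\Ocal_M \oplus F'$ are topologically isomorphic; hence by Grauert they are also holomorphically isomorphic. This furnishes a nowhere-vanishing holomorphic section $s$ of $E$, whose image $\Ocal_M \cdot s$ is the desired holomorphic line subbundle of $E$ and produces the required isotropic holomorphic immersion $f$. The main obstacle is the dimension inequality $2r > m$ and its translation via obstruction theory into a topological section; everything beyond is a standard invocation of the Oka--Grauert principle.
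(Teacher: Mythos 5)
Your proof is correct and follows essentially the same route as the paper: both reduce to finding a nowhere-vanishing holomorphic section of the rank-$r$ conormal bundle $E$ over $M$, using exactly the numerical hypothesis $2r>\dim M$ (equivalently $r>\left[\tfrac12\dim M\right]$). The only difference is presentational: the paper invokes the stable rank theorem together with the Oka--Grauert principle as a single citation (\cite[Corollary 8.3.9]{Forstneric2017E}), while you unpack that citation into the underlying obstruction-theoretic argument (via Andreotti--Frankel and the $(2r-2)$-connectedness of $S^{2r-1}$) followed by Oka--Grauert.
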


\begin{proof}
Write $\dim Z=n+1$. When $n=\dim M$, this is Proposition \ref{prop:summary} (a). 
Assume now that $n\ge \left[\tfrac 3 2\dim M\right]$.
The conormal vector bundle $E \to M$ of $g$, defined in local coordinates by the condition \eqref{eq:lifting},
is then a holomorphic vector bundle of rank $\dim Z-\dim M= n+1-\dim M > \left[\tfrac 1 2 \dim M\right]$.
It follows from the stable rank theorem and the Oka-Grauert
principle that $E$ admits a nowhere vanishing holomorphic section 
(see \cite[Corollary 8.3.9]{Forstneric2017E}). As explained in Sect.\ \ref{sec:basic}, such a section 
determines a holomorphic isotropic lifting $f\colon M\to X$ of $g$.
\end{proof}

%
%

Our next result is a general position theorem for holomorphic isotropic submanifolds
in the projectivised cotangent bundle of an Oka manifold (see \cite[Sect.\ 5.4]{Forstneric2017E}).

\begin{theorem}\label{th:genposOka}
Let $M$ be a Stein manifold and $Z$ be an Oka manifold. 
If $\dim Z\ge 2\dim M$, then every holomorphic map $g_0\colon M\to Z$ can  
be approximated uniformly on compacts in $M$ by holomorphic immersions
$g\colon M\to Z$ which lift to isotropic holomorphic embeddings
$f\colon M\hra X=\P(T^*Z)$.
\end{theorem}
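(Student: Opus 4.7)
The plan is to proceed in three steps: (1) approximate $g_0$ by a holomorphic immersion $g\colon M\to Z$ whose self-intersections (if any) are transverse; (2) lift $g$ to an isotropic holomorphic immersion via Proposition \ref{prop:liftingStein}; and (3) verify that the dimension hypothesis forces this lift to be injective, hence an embedding.

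For step (1), set $m=\dim M$ and $n+1=\dim Z$, so $n+1\ge 2m$. In the holomorphic 1-jet bundle $J^1(M,Z)$ the nonimmersive locus is a closed complex analytic subvariety of codimension $n-m+2>m$, so the jet transversality theorem for holomorphic maps from a Stein source to an Oka target (see, e.g., \cite[Chapter 8]{Forstneric2017E}) will produce a holomorphic immersion $g\colon M\to Z$ approximating $g_0$ uniformly on compacts in $M$. A second application of transversality, now to the diagonal of $Z\times Z$ via $(x,y)\mapsto (g(x),g(y))$ on $(M\times M)\setminus \Delta_M$, ensures that either $g$ has no self-intersections at all (the case $n+1>2m$) or its self-intersections are isolated transverse double points at which the two branches span $T_z Z$ (the case $n+1=2m$). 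This is the step in which the Oka hypothesis on $Z$ is essential.

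For step (2), the hypothesis $n+1\ge 2m$ implies $n+1>\left[\tfrac{3}{2}m\right]$ when $m\ge 2$, while for $m=1$ we have $n+1\ge 2=m+1$; in either case Proposition \ref{prop:liftingStein} delivers a holomorphic isotropic immersion $f\colon M\to X$ with $\pi\circ f=g$.

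For step (3), since $\pi\circ f=g$ the map $f$ can fail to be injective only at pairs $x_0\ne x_1\in M$ with $g(x_0)=g(x_1)=:z$. There are no such pairs when $n+1>2m$. When $n+1=2m$, the values $f(x_0),f(x_1)\in \P(T^*_z Z)$ are complex hyperplanes $\Sigma_0,\Sigma_1\subset T_z Z$ containing $dg_{x_0}(T_{x_0}M)$ and $dg_{x_1}(T_{x_1}M)$ respectively; transversality of the two branches yields $dg_{x_0}(T_{x_0}M)+dg_{x_1}(T_{x_1}M)=T_z Z$, whence $\Sigma_0=\Sigma_1$ would have to equal all of $T_z Z$, which is absurd. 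Hence $f(x_0)\ne f(x_1)$ and $f$ is an embedding.

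The principal difficulty lies in step (1): producing, via Oka theory, a transverse holomorphic immersion that uniformly approximates $g_0$ on compacts. Once such a $g$ is available, step (2) is a direct citation and step (3) is a simple linear-algebra check in the fibres of $\pi\colon X\to Z$.
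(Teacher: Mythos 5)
Your proposal is correct and takes essentially the same approach as the paper: approximate $g_0$ by a generic holomorphic immersion with normal crossings (the paper cites \cite[Corollary 8.9.3]{Forstneric2017E} in one stroke, whereas you unpack the two jet-transversality steps underlying it), lift via Proposition \ref{prop:liftingStein}, and argue that transversality at a double point forces the two lifted hyperplanes $\Sigma_0,\Sigma_1$ to be distinct. Your explicit check that $\dim Z\ge 2\dim M$ implies the hypothesis $\dim Z>\left[\tfrac32\dim M\right]$ of Proposition \ref{prop:liftingStein} is a useful detail that the paper leaves implicit.
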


\begin{proof}
By the general position theorem for maps from Stein manifolds to Oka manifolds
(see \cite[Corollary 8.9.3]{Forstneric2017E}), a generic
holomorphic map $g\colon M\to Z$ is an immersion with normal crossings when $\dim Z\ge 2\dim M$,
and is an embedding when $\dim Z> 2\dim M$. By Proposition \ref{prop:liftingStein}, such $g$
admits an isotropic lifting $f\colon M\to X$. The lifted map is an embedding 
since at a normal crossing point $g(x_0)=g(x_1)$ for $x_0\ne x_1 \in M$ the associated complex 
hyperplanes $\Sigma_{x_i}\supset dg_{x_i}(T_{x_i}M)$ for $i=0,1$ (the fibre points of $f$
over $x_0,x_1\in M$) are necessarily distinct. 
\end{proof}

%
%
Next, we give some results on the existence of {\em proper} holomorphic isotropic immersions
of Stein manifolds into projectivised cotangent bundles. 
The notion of (volume) density property has already been
mentioned before (see \cite[Sect.\ 4.10]{Forstneric2017E}).

\begin{corollary}\label{cor:Steinisotropic}
Assume that $Z$ is a Stein manifold of dimension $\ge 2$ with the density or the 
volume density property. Then, every Stein manifold $M$ such that $2\dim M \le \dim Z$ admits 
a proper isotropic holomorphic embedding $M\to X=\P(T^*Z)$.
\end{corollary}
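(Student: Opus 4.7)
The plan is to reduce the problem to the existence of a proper holomorphic embedding $g\colon M\hookrightarrow Z$ and then to observe that its canonical holomorphic isotropic lift $f\colon M\to X$ is automatically a proper isotropic embedding.

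First, I would invoke the embedding theorem for Stein manifolds with the density or volume density property: under the hypothesis $2\dim M\le\dim Z$, such a target $Z$ admits a proper holomorphic embedding from every Stein manifold $M$. This is a direct application of the Andersén-Lempert machinery in its embedding form (see, e.g., \cite[Chapter~9]{Forstneric2017E} and the original references quoted there). Denote the resulting proper embedding by $g\colon M\hookrightarrow Z$.

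Next, I would apply Proposition~\ref{prop:liftingStein} to $g$ to produce a holomorphic isotropic immersion $f\colon M\to X=\P(T^*Z)$ with $\pi\circ f=g$. The hypothesis $\dim Z\ge 2\dim M$ implies $\dim Z>\left[\tfrac{3}{2}\dim M\right]$ for every $\dim M\ge 1$, so the proposition applies. Injectivity of $f$ is immediate from injectivity of $g=\pi\circ f$, and $f$ is an immersion because $g$ is (via $d(\pi\circ f)_x=(d\pi)_{f(x)}\circ (df)_x$). For any compact $K\subset X$, the projection $\pi(K)\subset Z$ is compact, so $g^{-1}(\pi(K))\subset M$ is compact by properness of $g$, and $f^{-1}(K)$ is a closed subset of this compact set, hence compact. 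This shows that $f$ is proper, completing the argument.

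The principal obstacle is the first step: producing a proper holomorphic embedding $g\colon M\hookrightarrow Z$ in the given codimension range. This is a deep consequence of the density property (via Andersén-Lempert theory), but it is by now a standard result and is the only nontrivial input needed. The isotropic lifting and the verification of properness and injectivity then reduce to formal properties of the fibre bundle $\pi\colon X\to Z$ and of the lifting construction of Section~\ref{sec:basic}.
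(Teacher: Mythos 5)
Your proposal diverges from the paper at the very first step, and this is where the gap lies. You claim that under the hypothesis $2\dim M \leq \dim Z$ a Stein manifold $Z$ with the density (or volume density) property admits a \emph{proper holomorphic embedding} of every Stein manifold $M$ in that dimension range. This is not what the cited results give. For $\dim M = 1$, Andrist--Wold do produce proper embeddings into any such $Z$ of dimension $\geq 2$, but for $\dim M \geq 2$ the result of \cite{Forstneric2017immersions} gives a proper holomorphic \emph{embedding} only when $\dim Z > 2\dim M$ (strict inequality); at the boundary case $\dim Z = 2\dim M$ one only obtains a proper holomorphic \emph{immersion with transverse double points}. Since the corollary includes this boundary case, your first step fails there, and the paper accordingly works with immersions rather than embeddings.

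This change ripples through the rest of the argument. With an immersion $g$ that has transverse double points, Proposition~\ref{prop:liftingStein} yields an isotropic immersion $f\colon M\to X$, but injectivity of $f$ is no longer automatic: you must show that the isotropic lifting can be chosen so that at each double point $z = g(x_0) = g(x_1)$ the associated contact elements $\Sigma_{x_0}\supset dg_{x_0}(T_{x_0}M)$ and $\Sigma_{x_1}\supset dg_{x_1}(T_{x_1}M)$ are distinct. When $2\dim M = \dim Z$, transversality forces $dg_{x_0}(T_{x_0}M) + dg_{x_1}(T_{x_1}M) = T_zZ$, so no hyperplane contains both tangent spaces and the contact elements are automatically distinct; when $2\dim M < \dim Z$ one can arrange distinctness by a suitable choice of the section of the conormal bundle, as in the proof of Theorem~\ref{th:genposOka}. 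Your verification that the lift is proper is correct and unchanged. So the fix is: replace your appeal to an embedding theorem with the immersion-with-transverse-double-points result, then add the argument that the lift separates the fibres over the double points.
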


\begin{proof} By \cite{Forstneric2019JAM} (see also \cite{AndristWold2014} for $\dim M=1$),
there exists a proper holomorphic immersion $M\to Z$ with transverse double points, 
and the conclusion follows from Proposition \ref{prop:liftingStein}. 
\end{proof}

\begin{corollary}\label{cor:Steinisotropic2}
Assume that $Z$ is a Stein manifold.  If $M$ is a smoothly bounded strongly pseudoconvex domain
in a Stein manifold such that $2\dim M \le \dim Z$,  then $M$ admits a proper isotropic holomorphic 
immersion $M\to X=\P(T^*Z)$.
\end{corollary}

\begin{proof}
By the main result of \cite{DrinovecForstneric2010AJM}, $M$ admits a proper holomorphic immersion 
$M\to Z$, and the conclusion follows as in the previous corollary.
\end{proof}

%
%

\section{A general position theorem for holomorphic maps}
\label{sec:gen-pos}

Assume that $M$ and $Z$ are connected complex manifolds. 
In this section we prove the following parametric h-principle for the natural inclusion
\begin{equation}\label{eq:nonconstant}
	\Oscr_\mathrm{nc}(M,Z)\longhookrightarrow \Oscr(M,Z)
\end{equation} 
of the space of all nonconstant holomorphic maps  $M\to Z$ into the space of all holomorphic maps.   
This result is important for our subsequent analysis, and certain aspects of its proof will be 
used in the proof of the parametric general position theorem for Legendrian curves given by 
Theorem \ref{th:B}.

\begin{theorem} \label{th:A}
Assume that one of following conditions holds:
\begin{enumerate}[\rm (a)] 
\item $M$ is a compact strongly pseudoconvex domain with Stein interior and $Z$ is a complex manifold. 
\item $M$ is a Stein manifold and $Z$ is an Oka manifold. 
\end{enumerate}
Assume that $Q\subset P$ are compact sets in a Euclidean space and $f \colon M\times P\to Z$ is a 
continuous map satisfying the following two conditions:
\begin{itemize}
\item[\rm (i)] $f_p=f(\cdotp,p)\colon M\to Z$ is a holomorphic map for every $p\in P$, and
\item[\rm (ii)] $f_p\in \Oscr_{\mathrm{nc}}(M,Z)$ is a nonconstant holomorphic map for every $p\in Q$.
\end{itemize}
Then there exists a homotopy $f^t \colon M\times P\to Z$ $(t\in [0,1])$ such that 
$f^t_p := f^t(\cdotp,p)\in \Oscr(M,Z)$ for every $(p,t)\in P\times [0,1]$ and the following conditions hold:
\begin{itemize}
\item[\rm (1)]  $f^t_p =f_p$ \ for every $(p,t)\in (P\times \{0\}) \cup (Q\times [0,1])$, and 
\item[\rm (2)]  
$f^1_p\in \Oscr_\mathrm{nc}(M,Z)$  is nonconstant for every $p\in P$.
\end{itemize}
\end{theorem}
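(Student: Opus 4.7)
The plan is to identify the bad set
\[
P' = \{ p \in P : f_p \text{ is constant} \},
\]
which is closed in $P$ and, by hypothesis (ii), disjoint from $Q$. Fix an open neighbourhood $W$ of $P'$ in $P$ with $\overline{W} \cap Q = \emptyset$. The homotopy $f^t$ will be supported in $W$, meaning $f^t_p = f_p$ for $p \notin W$ and for all $t$, so condition (1) will be automatic; the real task is to arrange $f^1_p$ nonconstant for every $p \in W$.

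For each $p_0 \in P'$, the image $f_{p_0}(M)$ is a single point $z_0 \in Z$. I would pick a holomorphic chart $\psi_0 \colon V_0 \to B \subset \C^n$, with $n = \dim Z$, around $z_0$. Continuity of $f$, together with compactness of $M$ in case (a), or with a compact exhaustion $K_1 \subset K_2 \subset \cdots$ of $M$ in case (b), produces a neighbourhood $U_{p_0}$ of $p_0$ in $W$ on which $f(M \times U_{p_0}) \subset V_0$ (respectively $f(K_j \times U_{p_0}) \subset V_0$ for prescribed $j$). I extract a finite (case (a)) or locally finite (case (b)) subcover $\{U_i\}$ of $P'$ with charts $\{(V_i, \psi_i)\}$, fix a subordinate smooth partition of unity $\{\chi_i\}$ on $P$ vanishing on $Q$, and fix a nonconstant holomorphic function $\phi \colon M \to \C$, available because $M$ has Stein interior in case (a) and is Stein in case (b).

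In case (a) I would perturb inductively on the finite cover. At stage $i$, having built $f^{(i-1)}$, set
\[
f^{(i)}_p(x) = \psi_i^{-1}\bigl(\psi_i(f^{(i-1)}_p(x)) + \chi_i(p)\, \phi(x)\, v_i \bigr)
\quad \text{for } p \in U_i,
\]
and $f^{(i)}_p = f^{(i-1)}_p$ otherwise, for a small vector $v_i \in \C^n$ to be chosen. Wherever $\chi_i(p) > 0$, the term $\chi_i(p)\phi(x)v_i$ is nonconstant in $x$, so $f^{(i)}_p$ becomes nonconstant on $P' \cap U_i$. To avoid killing nonconstancy elsewhere, I would enlarge the perturbation to a sum $\sum_j \chi_i(p)\,\phi_j(x)\,v_{i,j}$ over several nonconstant $\phi_j$ and several directions $v_{i,j}$, and invoke Sard's theorem: for each fixed $p$ the bad parameter locus that makes $f^{(i)}_p$ constant is an analytic subset of positive codimension, and with enough directions this codimension exceeds $\dim P$, so a generic simultaneous choice works. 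Scaling the chosen parameters by $t \in [0,1]$ gives the homotopy at stage $i$, and concatenating the $N$ stages yields $f^t$.

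Case (b) requires two additional ingredients. First, noncompactness of $M$ is handled by an inductive exhaustion scheme together with Runge-type approximation for holomorphic maps from Stein manifolds into Oka manifolds, securing convergence in the compact-open topology. Second, since $Z$ is not globally a chart, the addition ``$\psi_i(\cdot) + \chi_i \phi v_i$'' must be replaced by the action of a dominating holomorphic spray; such sprays exist on $Z$ because $Z$ is Oka, and local spray perturbations can be assembled into a global holomorphic perturbation by the parametric Oka principle (cf.\ \cite[Sect.\ 5.4]{Forstneric2017E}). The main obstacle in both cases is the transversality step that prevents the perturbation from creating new constant members of the family, which is resolved by building a sufficiently rich finite-dimensional family of perturbation directions so that Sard's theorem secures a uniform generic choice over the finite-dimensional compact parameter set $P$.
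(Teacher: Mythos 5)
Your proposal and the paper's proof share the broad strategy: locate the closed set $P' = \{p\in P:f_p\text{ constant}\}$ (which the paper calls $P_0$), cover it by finitely many parameter neighbourhoods whose maps land in coordinate charts of $Z$, and perturb inductively with bump functions supported in those neighbourhoods. They differ at the crucial transversality step. The paper fixes finitely many test points $x_1,\ldots,x_n\in M$ with $2n-2>\dim P$, perturbs the finite-dimensional evaluation map $p\mapsto(f_p(x_1),\ldots,f_p(x_n))$ off the diagonal $\Delta\subset\C^n$ (a set of real codimension $2n-2$, so a generic small perturbation of the map $P\to\C^n$ misses it), and then realises the perturbed values by adding $\sum_j(G_j(p)-F_j(p))\phi_j(x)$ with $\phi_j(x_i)=\delta_{ij}$; by construction, the new maps are injective on $\{x_1,\ldots,x_n\}$ for every $p$, so new constant maps cannot appear. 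You instead perturb the maps directly by $\chi_i(p)\sum_j\phi_j(x)v_j$ and appeal to a Sard-type genericity argument. This can be made to work, but your phrasing glosses over what must be arranged: the $\phi_j$ must be chosen linearly independent modulo constants, and their number $N$ must satisfy $2N\dim Z>\dim P$, so that for each $p$ with $\chi_i(p)>0$ the set of bad $v=(v_j)\in\C^{N\dim Z}$ is at most a single point, and the union over $p\in P$ of these points projects to a null set; one must also keep the cumulative perturbations small so that the maps at later stages still land in the corresponding charts. The paper's test-point device is cleaner precisely because it excludes new constant maps by design rather than by genericity, and it chooses the codimension parameter $n$ freely (number of test points) rather than having it tied to $\dim Z$. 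For case (b), the paper simply applies the result on a compact strongly pseudoconvex $\Oscr(M)$-convex domain $M'\subset M$ and then extends by the parametric Oka--Weil theorem and parametric Oka principle; your invocation of sprays and local-to-global gluing is more machinery than is required here, though not incorrect.
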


Under the assumption (a) of the theorem, a holomorphic map from $M$ may either mean a
map holomorphic on an open neighbourhood of $M$ in the ambient complex manifold,
or a map of class $\Ascr^r(M,Z)$ for some $r\in \{1,2,\ldots,\infty\}$; the result holds for all classes.
To simply the notation we shall write $\Oscr(M,Z)$ in both cases. When speaking of continuous 
families of functions or maps holomorphic on neighbourhoods of $M$, it is always 
assumed that all maps in the family are holomorphic on the same neighbourhood which 
however may shrink in the course of a proof. (See Sect. \ref{sec:h-principle} for a
justification of this point.)

The next corollary follows in a standard way from Theorem \ref{th:A};
see e.g.\ \cite[Proof of Corollary 5.5.6]{Forstneric2017E}.

%
%
\begin{corollary}  \label{cor:gen-pos-A}
If $M$ and $Z$ satisfy one of the assumptions {\rm (a), (b)} in Theorem \ref{th:A}, then the inclusion 
\eqref{eq:nonconstant} of the space of all nonconstant holomorphic maps $M\to Z$ into the space of all
holomorphic maps is a weak homotopy equivalence. 
\end{corollary}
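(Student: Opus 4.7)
The plan is to apply the standard characterisation of weak homotopy equivalences by relative lifting. It suffices to show that for every $k \geq 0$ and every continuous map $\phi : D^k \to \Oscr(M,Z)$ with $\phi(\partial D^k) \subset \Oscrnc(M,Z)$ (with the conventions $D^0 = \{\mathrm{pt}\}$, $\partial D^0 = \emptyset$), there is a homotopy $\phi^t : D^k \to \Oscr(M,Z)$ from $\phi^0 = \phi$ to a map $\phi^1$ with $\phi^1(D^k) \subset \Oscrnc(M,Z)$, keeping $\phi^t|_{\partial D^k} = \phi|_{\partial D^k}$ throughout. The case $k=0$ gives surjectivity of $\pi_0(\Oscrnc) \to \pi_0(\Oscr)$, while the cases $k \geq 1$ give the vanishing of $\pi_k(\Oscr, \Oscrnc)$ for all choices of basepoint, equivalently that the inclusion induces an isomorphism on all $\pi_{k-1}$ and a surjection on $\pi_k$.

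Next, I would translate each such lifting problem into an instance of Theorem~\ref{th:A} via the exponential correspondence. A continuous map $\phi : D^k \to \Oscr(M,Z)$ is the same data as a continuous map $f : M \times D^k \to Z$ such that $f_p := f(\cdot,p)$ is holomorphic for every $p \in D^k$, and under this correspondence the hypothesis $\phi(\partial D^k) \subset \Oscrnc(M,Z)$ is precisely condition (ii) of Theorem~\ref{th:A} with $P = D^k$ and $Q = \partial D^k$. Theorem~\ref{th:A} then directly produces a continuous homotopy $f^t : M \times D^k \to Z$ whose fibres $f^t_p$ are holomorphic for every $(p,t)$, which agrees with $f$ at $t = 0$ and on $\partial D^k$ at every $t$, and which satisfies $f^1_p \in \Oscrnc(M,Z)$ for all $p \in D^k$. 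Passing back through the exponential correspondence yields the desired homotopy $\phi^t$.

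The only point requiring a small verification is the exponential correspondence itself, namely that continuous maps $D^k \to \Oscr(M,Z)$ correspond bijectively to continuous maps $M \times D^k \to Z$ with holomorphic fibres. Under assumption (b) of Theorem~\ref{th:A}, $M$ is locally compact Hausdorff and $\Oscr(M,Z)$ carries the compact-open topology, so this is the classical exponential law restricted to fibrewise holomorphic families. Under assumption (a), the analogous statement must be checked for the $\Ascr^r$ (or $\Oscr$-germ) topology on $\Oscr(M,Z)$, reducing to the joint continuity in $(x,p)$ of partial derivatives of $f$ up to order $r$; this is routine and is the sole technical ingredient beyond Theorem~\ref{th:A}. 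I do not expect any genuine obstacle: once the translation is in place, the corollary is immediate from Theorem~\ref{th:A} applied with $(P,Q) = (D^k, \partial D^k)$.
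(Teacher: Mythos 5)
Your proposal is correct and is essentially the argument the paper invokes by pointing to a ``standard'' derivation from Theorem~\ref{th:A} (citing Corollary 5.5.6 of \cite{Forstneric2017E}): one applies Theorem~\ref{th:A} with $(P,Q)=(D^k,\partial D^k)$ and translates via the exponential law, exactly as you describe. The only point worth flagging is that the exponential correspondence in case~(a) for the germ topology is not entirely immediate; in the paper this is precisely the content of Proposition~\ref{p:map-from-compact}, which shows a continuous map from a (sequentially) compact space into $\Oscr(M,Z)$ factors through $\Oscr(U,Z)$ for a single neighbourhood $U$ of $M$.
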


\begin{proof}[Proof of Theorem \ref{th:A}]
By the hypothesis we have that  $Q\subset P\subset \R^m$ for some $m\in\N$. 

We begin by considering case (a) with $Z=\C$.
Since the function $f_p\in\Oscr(M)$ is assumed to be
nonconstant for every $p$  in the compact set $Q$, 
there are finitely many pairwise distinct points $x_1,\ldots, x_n\in M$  such that 
$f_p$ is nonconstant on the set $\{x_1,\ldots, x_n\}$ for every $p\in Q$. 
Clearly we may assume that $2n-2 >m$. Consider the map $F=(F_1,\ldots,F_n)\colon P \to \C^n$ defined by
\[ 
	F(p) = \bigl(f_p(x_1),\ldots, f_p(x_n)\bigr) \in \C^n,\qquad p\in P.
\] 
Let 
\begin{equation}\label{eq:Delta}
	\Delta=\{(z_1,\ldots,z_n) \in \C^n: z_1=z_2=\ldots = z_n\}. 
\end{equation}
For a fixed $p\in P$ the map $f_p\colon M\to Z$ is nonconstant on the set
$\{x_1,\ldots, x_n\}\subset M$ if and only if $F(p)\notin \Delta$.
In particular, we have that $F(p)\notin \Delta$ for all $p\in Q$; equivalently, 
$F(Q)\cap \Delta=\varnothing$.  Using Tietze's theorem we may extend $F$ 
from $P$ to a continuous map $F:\R^m\to \C^n$, and by Weierstrass's 
theorem we can approximate $F$ as closely as desired uniformly on a neighbourhood of $P$
by a polynomial map $\wt F\colon \R^m\to\C^n$. 
The diagonal $\Delta\subset \C^n$ \eqref{eq:Delta} is a complex line of real codimension $2n-2$. 
Since $2n-2 >m$, the transversality theorem shows that a generic smooth
perturbation of $\wt F$ satisfies $\wt F(P)\cap \Delta=\varnothing$. 
There is a neighbourhood $Q_0\subset P$ 
of $Q$ such that $F(Q_0)\cap \Delta=\varnothing$. Assuming as we may that 
$\wt F$ is close enough to $F$ on $Q_0$, every map $F_t=(1-t)F+t\wt F$ $(t\in[0,1])$ 
satisfies $F_t(Q_0)\cap \Delta=\varnothing$. 

Choose a continuous function $\chi\colon P \to [0,1]$ supported on $Q_0$
which equals $1$ on $Q$. Consider the map 
\[
	G=(G_1,\ldots, G_n) = \chi F+(1-\chi)\wt F\colon P \to \C^n.
\]
Clearly, $G=F$ on $Q$, $G=\wt F$ on $P\setminus Q_0$, and $G(p)$ is a convex combination
of $F(p)$ and $\wt F(p)$ for every $p\in Q_0\setminus Q$. By the choice of $\wt F$ 
it follows that $G(P)\cap \Delta=\varnothing$.
 
Let $\phi_i \in \Oscr(M)$ for $i=1,\ldots, n$ be chosen such that 
$\phi_i(x_j)=\delta_{i,j}$ (Kronecker's delta) for $i,j=1,\ldots, n$. Consider the 
continuous function $\tilde f \colon M\times P\to \C$ defined by
\[
	\tilde f(x,p)= \tilde f_p(x) := f_p(x) + \sum_{j=1}^n (G_j(p)-F_j(p)) \phi_j(x),\qquad x\in M,\ p\in P.
\]
Note that $\tilde f_p\in \Oscr(M)$ for every $p\in P$,  $\tilde f_p=f_p$ for $p\in Q$, and 
\[
	\tilde f_p(x_i)=f_p(x_i)+G_i(p)-F_i(p) = G_i(p),\quad  i=1,\ldots,n,\ p\in P.
\]
Since $G(P)\cap \Delta=\varnothing$, $\tilde f_p$ is nonconstant on $\{x_1,\ldots, x_n\}$ for every 
$p\in P$. Note that $\tilde f$ approximates $f$ as closely as desired uniformly
on $M\times P$ provided $\wt F$ is close enough to $F$ on $P$. 
A homotopy from $f_p$ to $\tilde f_p$ is obtained by taking 
$(1-t)f_p+t\tilde f_p$ for $t\in [0,1]$.

This establishes case (a) of the theorem when $Z=\C$. 
An obvious modification of the proof applies when $Z=\C^k$ for some $k\in\N$.
If $M$ is a Stein manifold and $Z=\C^k$, we apply the already proven result
on any nontrivial compact strongly pseudoconvex and $\Oscr(M)$-convex domain $M'\subset M$;
the conclusion then follows from the parametric Oka-Weil theorem (see \cite[Theorem 2.4.8]{Forstneric2017E}).

Assume now that $Z$ is an arbitrary complex manifold (in case (a)), resp.\ an Oka manifold (in case (b)). 
Consider first the case when $M$ is a strongly pseudoconvex domain.  Set 
\[
	P_0=\{p\in P: f_p\ \ \text{is constant}\}.
\]
If $p\in P_0$ and $q\in P$ is close to $p$, then $f_q(M)$ is contained in a coordinate
neighbourhood of the point $f_p(M)$ in $Z$. Hence, by compactness of $Q$ 
there are finitely many pairs of open sets $V'_j\Subset V_j\subset P$ 
and coordinate neighbourhoods $U_j\subset Z$ for $j=1,\ldots,r$ such that 
\[
	f_p(M)\subset U_j\ \ (p\in \overline V_j), \qquad P_0 \subset \bigcup_{j=1}^r V'_j. 
\]
This means that for all $p\in \overline V_j$ we may consider $f_p$ as a map with values in 
$\C^k$ with $k=\dim Z$. Choose a compact set $Q_0\subset P$ such that
\[
	Q\subset \mathring Q_0,\qquad P_0\cap Q_0=\varnothing,\qquad 
	P\setminus Q_0 \subset \bigcup_{j=1}^r V'_j.
\]

The desired homotopy $f^t_p$ will be constructed in $r$ steps. At the $j$-th step we shall deform
the given family of maps from the $(j-1)$-st step for the parameter values $p\in V_j$, 
and the homotopy will be fixed for $p\in P\setminus V_j$. 

In the first step, we apply the already explained argument in the special case  $Z=\C^k$
to the initial family of maps $f_p\colon M\to U_1$ $(p\in \overline V_1)$ into the coordinate neighbourhood
$U_1\subset Z$, approximating it as closely as desired by a continuous family of 
nonconstant holomorphic maps  $\tilde f_p\colon M\to U_1$ $(p\in \overline V_1)$. 

We shall now modify the family $\{\tilde f_p\}_{p\in \overline V_1}$ in order to glue it with the initial
family $\{f_p\}_{p\in P}$ for the parameter values $p\in P\setminus V_1$. Choose an open set $V''_1$ in $P$, 
with $V'_1\Subset V''_1\Subset V_1$, and a continuous function $\chi\colon P\to[0,1]$ such that 
\[
	\chi=1\ \text{on}\ V'_1\setminus Q_0 \quad \text{and}
	\quad \chi=0\  \text{on}\ (V_1\cap Q) \cup (V_1\setminus V''_1).
\]
Define a new continuous family  $f^1_p\in\Oscr(M)$ $(p\in V_1)$ by
\[
	f^1_p=\chi(p) \tilde f_p + (1-\chi(p)) f_p.
\]
Since $\chi=0$ on $V_1\setminus V''_1$, the family $f^1_p$ extends 
continously to all $p\in P$ by setting $f^1_p=f_p$ for $p\in P\setminus V_1$. 
For $p\in V'_1\setminus Q_0$ we have that $\chi(p)=1$ and hence $f^1_p=\tilde f_p$ 
which is nonconstant on $\{x_1,\ldots, x_n\} \subset M$ 
by the choice of $\tilde f_p$. For $p\in V_1\cap Q_0$ the map $f'_p$ is nonconstant since $f_p$ is 
nonconstant and $\tilde f_p$ is chosen close enough to $f_p$ so that any convex combination of the
two maps (in particular, $f^1_p$) is nonconstant. Thus, $f^1_p$ is nonconstant for all 
$p\in Q_1:=Q_0\cup \overline{V'_1}$. It follows that 
\[
	P_1:=\left\{ p\in P : f^1_p\ \text{is constant} \right\} 
	\subset P\setminus Q_1 \subset \bigcup_{j=2}^r V'_j. 
\]
Assuming as we may that $\tilde f_p$ approximates $f_p$ sufficiently closely 
uniformly on $M$ for all $p\in \overline{V_1}$,
it follows that $f^1_p(M)\subset U_j$ for all $p\in \overline {V_j}$, $j=2,\ldots, r$.
Hence, we can apply  the same argument as in the first step to the family $f^1_p$ 
on the parameter set $p\in V_2$, thereby getting a new family $f^2_p\in \Oscr(M)$ $(p\in P)$
such that $f^2_p$ is nonconstant for all 
$p\in Q_2:=Q_1\cup\overline {V'_2}=Q_0 \cup \overline {V'_1} \cup \overline {V'_2}$ 
and $f^2_p=f^1_p$ for all $p\in P\setminus V_2$. 
After $r$ steps of this kind we obtain a family $f^r_p\in \Oscrnc(M)$ $(p\in P)$ 
of nonconstant holomorphic maps such that $f^r_p=f_p$ for all $p\in Q_0$, as well as a homotopy from $f_p$ to $f^r_p$. Renaming $f^r_p$ to $f^1_p$ we thus obtain the desired conclusion.

Finally, if $M$ is a Stein manifold and $Z$ is an Oka manifold, we first apply the above construction
on any compact strongly pseudoconvex and $\Oscr(M)$-convex domain $M'\subset M$;
the conclusion then follows from the parametric Oka principle, with approximation on $M'$,
for holomorphic maps $M\to Z$ with respect to the pair of compact parameter spaces $Q\subset P$
(see \cite[Theorem 5.4.4]{Forstneric2017E}). 
\end{proof}

%
%

\section{Approximating vertical Legendrian curves by horizontal ones}
\label{sec:approx}

Let $Z$ be a connected complex manifold and $X=\P(T^*Z)$ be its projectivised 
cotangent bundle, endowed with the standard complex contact structure (see Sect.\ \ref{sec:intro}). 
Let $\pi\colon X\to Z$ denote the base projection. 
Given a connected Riemann surface $M$, we denote by $\Lscr(M,X)$ the space of all 
Legendrian holomorphic maps $M\to X$, endowed with the compact-open topology. 
We have seen in the introduction that the postcomposition map
\[ 
	\pi_* : \Lscr(M,X) \longrightarrow \Oscr(M,Z), \qquad f \longmapsto \pi\circ f, 
\]
has two kinds of fibres.  The fibre over a constant map $M\to \{z^0\}\subset Z$ consists of 
all holomorphic maps into the fibre $\pi^{-1}(z^0)$; we call such maps \emph{vertical}.  
Clearly, every vertical map is Legendrian. Nonvertical Legendrian maps $f\in \Lscr(M,X)$, 
i.e., those with nonconstant projection $g=\pi\circ f\colon M\to Z$, are called \emph{horizontal}.

In this section we prove several results concerning the existence of approximations
and deformations of vertical Legendrian maps to horizontal ones. 
In the introduction we have indicated some reasons
why such approximations do not exist in general. 
On the other hand, we will now show that approximation by horizontal Legendrian curves is 
always possible if $M$ is a compact bordered Riemann surface (with nonempty boundary). 
It is classical (see e.g.\ Stout \cite{Stout1965}) 
that such a surface is conformally equivalent to a compact domain 
with real analytic boundary in an open Riemann surface $\wt M$.
We denote by $\Oscr(M, X)$ the space of germs of holomorphic maps from open neighbourhoods of $M$ 
in $\widetilde M$ into $X$, endowed with the colimit topology; see Sect.\ \ref{sec:h-principle} for more details.
When speaking of homotopies of maps in $\Oscr(M,X)$,
we assume that all maps in the family are defined on the same neighbourhood of $M$;
this is justified by Proposition \ref{p:map-from-compact}.
Our arguments and results also apply to Legendrian curves of  class 
$\Ascr^r(M,X)=\Cscr^r(M,X)\cap\Oscr(\mathring M,X)$ for any integer $r\ge 1$.
We shall denote the space of Legendrian curves $M\to X$ 
in any of these classes by $\Lscr(M,X)$; it will be clear from 
the context which class is meant.

We begin with the following basic result on approximation of vertical holomorphic 
curves by horizontal holomorphic Legendrian curves.

%
%
\begin{proposition} \label{prop:approx1}
Let $Z$ be a connected complex manifold of dimension at least $2$ and let $X=\P(T^*Z)$.
Assume that $M$ is a compact bordered Riemann surface. 
Then, every vertical holomorphic map $f\in \Oscr(M,X)$ into a fibre $X_{z^0}$ can be 
approximated uniformly on $M$ by horizontal Legendrian maps $f_1\in \Lscr(M,X)$.
Furthermore, $f_1$ can be chosen such that there is a homotopy $f_t\in \Lscr(M, X)$ $(t\in [0,1])$ 
satisfying the following conditions:
\begin{enumerate}[\rm (i)]
\item $f_0=f$, 
\item the map $f_t$ is horizontal and uniformly close to $f$ for every $t\in (0,1]$, and 
\item every map $f_t$ in the family has the same vertical component as $f=f_0$
with respect to a chosen trivialisation of the restricted bundle $X|_U\to U$ over 
a contractible open neighbourhood $U\subset Z$ of $z^0$ containing $\bigcup_{t\in [0,1]} f_t(M)$.
\end{enumerate}
\end{proposition}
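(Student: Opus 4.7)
The plan is to reduce to a local problem near $z^0$ and deform only the horizontal projection of $f$ while keeping its vertical component rigidly fixed. Choose a contractible coordinate chart $U\subset Z$ about $z^0$ with local coordinates $(z_0,\ldots,z_n)$ vanishing at $z^0$, so that $X|_U$ is trivialised as $U\times\CP^n$ with contact form \eqref{eq:affine}. In this trivialisation, $f$ has the form $f(x)=(0,[\zeta(x)])$ for a holomorphic map $[\zeta]\colon M\to\CP^n$. I seek the homotopy in the explicit form
\[
    f_t(x) \,=\, \bigl(t\epsilon\, h(x),\ [\zeta(x)]\bigr), \qquad x\in M,\ t\in[0,1],
\]
where $\epsilon>0$ is a small constant and $h=(h_0,\ldots,h_n)\colon M\to\C^{n+1}$ is a nonconstant holomorphic map satisfying the Legendrian condition $\sum_{j=0}^n \zeta_j(x)\, dh_j(x)=0$ on $M$. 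Granting the existence of such $h$, the map $f_t$ agrees with $f$ at $t=0$, is horizontal for every $t\in(0,1]$ since $t\epsilon h$ is nonconstant, is Legendrian by construction, and has vertical component $[\zeta(x)]$ in the fixed trivialisation independently of $t$; and $\|f_t-f\|_\infty\le \epsilon\|h\|_\infty$ makes the approximation arbitrarily close and keeps $\bigcup_{t\in[0,1]} f_t(M)\subset X|_U$ after shrinking $\epsilon$.

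To construct $h$, I first lift $[\zeta]$ to a nowhere vanishing holomorphic map $\zeta=(\zeta_0,\ldots,\zeta_n)\colon M'\to\C^{n+1}_*$ on an open neighbourhood $M'\subset\wt M$ of $M$ in the ambient Riemann surface. This is possible because $M'$ is an open Riemann surface, hence Stein with $H^2(M',\Z)=0$, so the exponential sheaf sequence forces every holomorphic line bundle on $M'$ to be trivial; in particular $[\zeta]^*\Ocal_{\CP^n}(-1)$ admits a nowhere vanishing holomorphic section. The kernel $\Kcal=\ker\zeta\subset M'\times\C^{n+1}$ is a holomorphic vector subbundle of rank $n$, also trivial by Grauert's theorem on open Riemann surfaces; fix a global holomorphic frame $e_1,\ldots,e_n\colon M'\to\C^{n+1}$ for $\Kcal$. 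For any $n$-tuple of holomorphic $1$-forms $(\alpha_1,\ldots,\alpha_n)$ on $M'$, the $\C^{n+1}$-valued $1$-form $\omega=\sum_{i=1}^n \alpha_i e_i$ takes values in $\Kcal$ and therefore automatically satisfies $\sum_j \zeta_j\omega_j=0$; since the $e_i$ are pointwise linearly independent, $\omega$ is nonzero whenever $(\alpha_i)\neq 0$.

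It remains to arrange that $\omega$ is exact on $M$, so that it integrates to a single-valued map $h\colon M\to\C^{n+1}$ with $dh=\omega$. Consider the $\C$-linear period map
\[
    \Phi\colon H^0(M',\Omega^1)^n\lra \mathrm{Hom}\bigl(H_1(M,\Z),\,\C^{n+1}\bigr),\qquad (\alpha_i)\longmapsto \Bigl(\gamma\mapsto \int_\gamma \textstyle\sum_i\alpha_i e_i\Bigr).
\]
The domain is infinite-dimensional because the space of holomorphic $1$-forms on an open Riemann surface is infinite-dimensional, while the target is finite-dimensional since $H_1(M,\Z)$ has finite rank for a bordered surface. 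Hence $\ker\Phi$ is infinite-dimensional, and any nonzero element $(\alpha_i)\in\ker\Phi$ produces a nonzero closed holomorphic $\C^{n+1}$-valued $1$-form $\omega$ all of whose periods vanish. Integrating from a chosen base point gives the required nonconstant holomorphic $h\colon M\to\C^{n+1}$ with $dh=\omega$. The main conceptual difficulty lies precisely in preserving the vertical component: rather than perturbing $\pi\circ f$ and taking Legendrian lifts of the perturbations, which would generically change the vertical component and, by Proposition~\ref{prop:singularity}, may fail to depend continuously on the deformation parameter near critical points, I fix the vertical lift $\zeta$ up front and operate entirely within the prescribed hyperplane distribution $\Kcal$.
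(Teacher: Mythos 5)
Your proof is correct and takes a genuinely different route from the paper's. Both arguments fix the vertical component $[\zeta]$ and seek a small nonconstant horizontal component $\tilde g$ satisfying $\sum_j \zeta_j\, d\tilde g_j=0$, but you solve this constraint \emph{geometrically} rather than \emph{computationally}. You observe that the differential of the sought horizontal component must take values in the rank-$n$ holomorphic subbundle $\Kcal=\ker\zeta\subset M'\times\C^{n+1}$, trivialise $\Kcal$ by Grauert, and then the Legendrian condition becomes automatic: any $\Kcal$-valued holomorphic $1$-form works, so the whole problem reduces to exactness, handled by a soft dimension count on the period map. The paper instead normalises to $h_n\not\equiv 0$, solves $d\tilde g_n = -\sum_{i<n}(h_i/h_n)\,d\tilde g_i$, substitutes $\tilde g_i = h_n^2 g_i$ to kill the poles at the zeros of $h_n$, integrates by parts to isolate the $1$-form $\omega=h_n^2\,d(h_0/h_n)$, and then invokes Gromov's convex integration lemma together with Mergelyan approximation to construct explicit period-controlling functions $\xi_1,\ldots,\xi_l$. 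Your argument is cleaner and shorter; the paper's pays for its extra machinery by producing an explicit and uniformly controlled family of period correctors, which is what makes the argument directly reusable in the parametric Remark~\ref{rem:approx1} (via the implicit function theorem) and hence in the proof of Theorem~\ref{th:B}. To extend your version parametrically one would need a continuous family of trivialising frames for $\Kcal_p=\ker\zeta_p$ and a continuous selection in the kernels of the period maps $\Phi_p$; this is doable but requires more care than the paper's explicit construction, particularly where the $\Kcal_p$ vary. Two small points worth making explicit: the $1$-forms $\alpha_i$ on the open Riemann surface $M'$ are automatically closed, so $\omega$ is closed with no further argument; and since $M\hookrightarrow M'$ is a homotopy equivalence, vanishing of periods over $H_1(M,\Z)$ already gives a single-valued primitive on all of $M'$, not merely on $M$.
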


\begin{proof}
By the hypothesis, the image $f(M)$ lies in the fibre of the projection $\pi\colon X\to Z$
over some point $z^0\in Z$. Let $\dim Z=n+1\ge 2$. 
Choose holomorphic coordinates $z=(z_0,\ldots,z_n)$ on 
a neighbourhood $U\subset Z$ of $z^0$ such that $z(U)\subset \C^{n+1}$ is a ball
centred at $z(z^0)=0\in\C^{n+1}$. 
Let $[\zeta_0:\cdots:\zeta_n]$ be the corresponding homogeneous coordinates
on $\P(T_{z}^* Z)\cong \CP^n$ for $z\in U$. In these coordinates, the contact structure
on $X_U =\P(T^* U)$ equals $\xi=\ker\eta$ where $\eta= \sum_{i=0}^n \zeta_i \, dz_i$
(see \eqref{eq:eta}), and $f=(0,\tilde h)$ for some holomorphic map 
$\tilde h\colon M\to\CP^n$.  Let $\tau\colon \C^{n+1}_*\to \CP^n$ be the tautological projection.
Since $M$ has the homotopy type of a bouquet of circles, $\tilde h$ lifts 
to a map $h=(h_0,\ldots, h_n) \colon M\to \C^{n+1}_*$ which may be chosen holomorphic by the
Oka principle (see \cite[Lemma 5.1]{AlarconForstnericLopez2019JGEA}).
To prove the proposition, we shall find a nonconstant holomorphic map 
$\tilde g=(\tilde g_0,\ldots,\tilde g_n)\colon M\to z(U)\subset \C^{n+1}$ such that 
\begin{equation}\label{eq:main}
	\sum_{i=0}^n h_i \, d\tilde g_i =0\quad \text{on}\ M.
\end{equation}
The holomorphic map $f_1=(z^{-1}\circ \tilde g,\tilde h):M\to X$ is then horizontal Legendrian, 
and it approximates $f$ provided $\tilde g$ is small. 
Furthermore, $f_t=(z^{-1}\circ (t\tilde g),\tilde h)$ for $t\in [0,1]$ is a homotopy 
of holomorphic Legendrian maps from $f_0=f$ to $f_1$ satisfying the proposition.

Note that at least one of the component functions $h_i$ of $h$ is not identically zero;
assume that this holds for $h_n$. The equation \eqref{eq:main} is then equivalent to
\begin{equation}\label{eq:main2}
	d\tilde g_n = - \sum_{i=0}^{n-1} \frac{h_i}{h_n} \, d\tilde g_i =: \beta.
\end{equation}
We shall find functions $\tilde g_0,\ldots,\tilde g_{n-1}\in\Oscr(M)$ with arbitrarily small norm, 
not all constant, such that the $1$-form $\beta$ is holomorphic and exact on $M$,  i.e., it 
has vanishing periods on a basis of the homology group $H_1(M;\Z)$. 
A function $\tilde g_n$ satisfying \eqref{eq:main2} is then obtained by integration,
thereby providing a nonconstant solution to \eqref{eq:main}.

Note that the map $\tilde h\colon M\to \CP^n$ is constant if and only if all
quotients $h_i/h_n$ for $i=0,\ldots, n-1$ are constant on $M$; 
in this case the $1$-form $\beta$ \eqref{eq:main2}
is exact for every choice of functions $\tilde g_0,\ldots,\tilde g_{n-1}\in\Oscr(M)$. 

Suppose now that $\tilde h$ is not constant, 
and assume without loss of generality that $h_0/h_n$ is nonconstant.
In order to ensure that the $1$-form 
$\beta$ \eqref{eq:main2} is holomorphic on $M$, we seek functions $\tilde g_i$ 
$(i=0,\ldots,n-1)$ of the form $\tilde g_i=h_n^2 g_i$ with $g_i\in\Oscr(M)$.
Then, $d\tilde g_i=h_n^2 dg_i + 2h_n g_i dh_n$ and hence 
\begin{equation}\label{eq:beta}
	\beta = - \sum_{i=0}^{n-1} h_i(h_n dg_i + 2g_i dh_n).
\end{equation}
This reduction is unnecessary if the function $h_n$ is nowhere vanishing on $M$.

When $g_i=0$ for all $i=0,\ldots,n-1$, we have $\beta =0$ which is exact.
We will now show that for an arbitrary choice of functions $g_1,\ldots,g_{n-1}\in \Oscr(M)$
with sufficiently small norm, the function $g_0=g_0(g_1,\ldots,g_{n-1})\in\Oscr(M)$ 
can be chosen such that $\beta$ is exact.

Let $C_1,\ldots, C_l\subset M$ be smooth closed curves forming
a basis of the homology group $H_1(M;\Z)\cong \Z^l$ and such that
the compact set $C=\bigcup_{j=1}^l C_j$ is $\Oscr(M)$-convex. The condition 
that $\beta$ have vanishing periods is then 
\[
	\int_{C_j} h_0h_n dg_0 + 2g_0 h_0 dh_n = \alpha_j\in \C,\qquad j=1,\ldots,l,
\]
where the constants $\alpha_j\in \C$ are obtained by integrating the remaining terms 
with indices $i=1,\ldots, n-1$ in $\beta$. (In particular, these constants 
depend on  the functions $g_1,\ldots,g_{n-1}$.) Furthermore, integrating the first term 
$h_0h_n dg_0$ by parts we obtain the condition
\begin{equation}\label{eq:omega}
	\int_{C_j} g_0 \, \omega = \wt \alpha_j,\qquad j=1,\ldots, l,
\end{equation}
for some other numbers $\wt \alpha_j\in \C$ depending on the functions  $g_1,\ldots,g_{n-1}\in \Oscr(M)$, 
where
\begin{equation}\label{eq:omega1}
	\omega= 2h_0 dh_n - d(h_0h_n) = h_0 dh_n - h_n dh_0 = h_n^2\, d(h_0/h_n). 
\end{equation}
Since $h_0/h_n$ is nonconstant by the assumption, the holomorphic $1$-form 
$\omega$ does not vanish identically on $M$, and hence it does not vanish identically 
on any nontrivial arc in $M$ in view of the identity principle. 
This allows us to find functions $\xi_1,\ldots,\xi_l \in \Oscr(M)$ such that 
\begin{equation}\label{eq:omega2}
	\int_{C_j} \xi_k \, \omega = \delta_{j,k}, \qquad j,k=1,\ldots, l.
\end{equation}
(As before, $\delta_{i,j}$ denotes Kronecker's delta.) In addition, the $\xi_k$'s may be chosen to vanish at
a given pair of distinct points $x^0, x^1\in M$. To find such functions, we first construct smooth functions 
$\xi_k \colon C=\bigcup_{j=1}^l C_j \to\C$ satisfying \eqref{eq:omega2} and vanishing at $x^0, x^1\in M$.
This can easily be done using Gromov's {\em convex integration lemma}
\cite[Lemma 2.1.7]{Gromov1973IZV}; see also Spring \cite[Theorem 3.4]{Spring2010}
for a parametric version. (Similar results have been proved in \cite[Lemma 7.3]{AlarconForstneric2014IM} 
and in \cite[Lemma 2.3]{AlarconForstnericLopez2019JGEA}; see also 
\cite[Lemma 3.8]{AlarconForstneric2019JAMS}.) 
Since $C$ is $\Oscr(M)$-convex, we can apply the Mergelyan approximation theorem to find
holomorphic functions $\tilde \xi_k\in \Oscr(M)$ $(k=1,\ldots, l)$ vanishing at $x^0$ and $x^1$
such that the period $l\times l$ matrix $A=\left(\int_{C_j} \tilde \xi_k \, \omega\right)=(a_{j,k})$ is very close 
to the identity. Finally, taking $\xi_k= \sum_{i=1}^l a^{i,k}\tilde \xi_i$ $(k=1,\ldots,l)$ with $A^{-1}=(a^{i,k})$ gives
functions satisfying condition \eqref{eq:omega2}.

It follows from \eqref{eq:omega2} that for every collection of functions $\xi_0,g_1,\ldots,g_{n-1}\in \Oscr(M)$ 
there is a unique $t=(t_1,\ldots,t_l)\in\C^l$ such that the function
\begin{equation}\label{eq:g0}
	g_0:=\xi_0 +\sum_{k=1}^l t_k \xi_k \in \Oscr(M) 
\end{equation}
satisfies the period conditions \eqref{eq:omega}. By choosing $\xi_0,g_1,\ldots,g_{n-1}\in \Oscr(M)$ 
close to the zero function, $t$ is also close to $0\in \C^l$ and hence $g_0$ is as small as desired. 
Furthermore, choosing $\xi_0$ such that $\xi_0(x^0)\ne \xi_0(x^1)$ 
we get that  $g_0(x^0)\ne g_0(x^1)$, so $g_0$ is nonconstant. 
\end{proof}

%
%
\begin{remark}\label{rem:approx1}
The proof of Proposition \ref{prop:approx1} generalizes to the following parametric case. 
Let $P$ be a compact Hausdorff space and $\{f_p:p\in P\}\subset  \Lscr(M,X)$ be a continuous family of 
holomorphic Legendrian curves whose base projections 
$g_p=\pi\circ f_p\in \Oscr(M,Z)$ $(p\in P)$ take values in a contractible coordinate chart $U$ of $Z$. Assume that the vertical
components $\tilde h_p\colon M\to\CP^n$ of $f_p$ admit a continuous family of liftings 
$h_p= (h_{p,0},\ldots, h_{p,n}) \colon M\to \C^{n+1}_*$ for $p\in P$. 
(By \cite[Lemma 5.1]{AlarconForstnericLopez2019JGEA} this holds
if $P$ is contractible.) Assume  that one of the components of $h_p$, 
say $h_{p,n}$, is not identically zero and one of the quotients $h_{p,i}/h_{p,n}$ 
is nonconstant for every $p\in P$. (The indices $i$ and $n$ should not depend on $p\in P$.)
Then, there is a homotopy $f^t_p\in \Lscr(M,X_U)$ $(p\in P,\ t\in [0,1])$ 
such that for each $p\in P$ we have that
\begin{enumerate}[\rm (i)]
\item $f^0_p=f_p$, 
\item the map $f^t_p$ is horizontal and uniformly close to $f_p$ for every $t\in (0,1]$, and 
\item every map $f^t_p$ for $t\in [0,1]$ has the same vertical component as $f_p=f^0_p$
(compare with Proposition \ref{prop:approx1} (iii)).
\end{enumerate}
To prove this result, we apply the proof of Proposition \ref{prop:approx1} with continuous 
dependence of all objects on the parameter $p\in P$. In particular, we find finitely many continuous families 
of functions $\xi_{p,k}\in \Oscr(M)$ $(p\in P)$ for $k=1,\ldots, N$ such that 
the vectors
\[
	\biggl( \int_{C_j} \xi_{p,k} \omega_p \biggr)_{\! j=1,\ldots,l} \in \C^l,\qquad k=1,\ldots,N, 
\] 
span $\C^l$ for every $p\in P$. (Here, $\omega_p$ is the $1$-form \eqref{eq:omega1}
depending on $p\in P$.) The concluding argument is the same as before, except 
that we now use the implicit function theorem to find numbers $t_1,\ldots,t_N\in \C$,
depending continuously on $p\in P$,  which ensure that the functions
$g_{p,0}$ defined by \eqref{eq:g0} satisfy the period conditions \eqref{eq:omega}
for all $p\in P$. Note that families of nonconstant perturbations
of the component functions $g_{p,j}$, depending continuously on $p\in P$, exist by Theorem \ref{th:A}.
\qed\end{remark}

Proposition \ref{prop:approx1} implies the following result for an open bordered Riemann surface $M$ (the interior of a compact bordered Riemann surface $\overline M$).

\begin{corollary}\label{cor:approx1}
Assume that $M$ is an open bordered Riemann surface. Then, every vertical holomorphic
map $f\in \Oscr(M, X)$ can be approximated uniformly on compacts in $M$ by horizontal
holomorphic Legendrian maps $f_1\in \Lscr(M,X)$. If in addition 
$f\in \Ascr^k(M,X) = \Cscr^k(\overline M,X)\cap \Oscr(M,X)$, $k\geq 1$, then the approximation is possible in $\Cscr^k(\overline M,X)$.
\end{corollary}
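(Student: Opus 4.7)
The plan is to deduce the corollary from Proposition \ref{prop:approx1} via a Mittag-Leffler--type exhaustion argument.  Since $M$ is Stein by the Behnke-Stein theorem, choose a normal exhaustion $M_1 \Subset M_2 \Subset \cdots$ of $M$ by compact bordered Riemann sub-surfaces, each Runge in $M$ (and hence in $M_{k+1}$).  Fix a compact $K \subset M$, which we may assume lies in $M_1$, fix $\epsilon > 0$, and fix a compatible distance $d$ on $X$.  I would construct inductively a sequence of horizontal holomorphic Legendrian maps $f_k \in \Lscr(M_k, X)$ satisfying $\sup_{M_1} d(f_1, f) < \epsilon/2$ and $\sup_{M_k} d(f_{k+1}, f_k) < \epsilon/2^{k+1}$ for every $k \ge 1$.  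The telescoping estimates then force the sequence to converge uniformly on every compact subset of $M$ to a horizontal holomorphic Legendrian map $f_\infty \in \Lscr(M,X)$ with $\sup_K d(f_\infty, f) < \epsilon$, which is the required approximation.  The base case $k=1$ is an immediate application of Proposition \ref{prop:approx1} to $f|_{M_1}$.

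The main obstacle is the inductive step, which amounts to the following Runge-type approximation theorem for horizontal Legendrian curves: given a horizontal Legendrian $f_k$ on $M_k$, there exists a horizontal Legendrian $f_{k+1}$ on $M_{k+1}$ approximating $f_k$ uniformly on $M_k$.  I would establish this by revisiting the construction in the proof of Proposition \ref{prop:approx1}.  Pick a coordinate chart $U \subset Z$ about $z^0$ and fibre coordinates so that $f = (0, \tilde h)$ with $\tilde h \colon M \to \CP^n$ holomorphic; by the Oka principle on the Stein surface $M$, the map $\tilde h$ lifts to a globally defined holomorphic map $h = (h_0, \ldots, h_n) \colon M \to \C^{n+1}_*$.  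In these coordinates $f_k = (\tilde g^{(k)}, \tilde h|_{M_k})$ with $\sum_{i=0}^n h_i \, d\tilde g^{(k)}_i = 0$ on $M_k$.  First, extend the components $\tilde g^{(k)}_1, \ldots, \tilde g^{(k)}_{n-1}$ from $M_k$ to $M_{k+1}$ by Runge approximation.  Second, following the proof of Proposition \ref{prop:approx1}, correct the zeroth component by adding a small term $\sum_j t_j\, \xi_j$, where $\xi_1, \ldots, \xi_{l_{k+1}} \in \Oscr(M_{k+1})$ are holomorphic functions produced via Gromov's convex integration lemma and Mergelyan approximation so that the period matrix $\bigl(\int_{C_i} \xi_j\, \omega\bigr)_{i,j}$ is invertible on a homology basis $C_1, \ldots, C_{l_{k+1}}$ of $M_{k+1}$, with $\omega$ as in \eqref{eq:omega1}; the $t_j \in \C$ are then chosen, via the implicit function theorem, to kill the periods of the resulting $1$-form $\beta$ on $M_{k+1}$.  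Integrating $\beta$ with integration constant matching $\tilde g^{(k)}_n$ at a base point of $M_k$ furnishes the last component $\tilde g^{(k+1)}_n$, and all corrections can be made arbitrarily small on $M_k$, so that the nonconstancy of $\tilde g^{(k)}$ is preserved in the limit.

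For the final $\Cscr^1$ assertion, when $M$ is a compact bordered Riemann surface and $f \in \Ascr^1(M,X)$, essentially the same construction as in Proposition \ref{prop:approx1} applies verbatim, provided that the Mergelyan approximation and convex integration steps are performed with $\Cscr^1$ control up to the boundary.  These are standard strengthenings of the results used and cause no new difficulty, so the same horizontal Legendrian approximant lies in $\Ascr^1(M,X)$ and is $\Cscr^1$-close to $f$ on $\overline M$.
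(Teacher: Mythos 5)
Your approach departs substantially from the paper's, and in a way that both overcomplicates matters and leaves a genuine gap. The crucial observation you miss is that an \emph{open bordered} Riemann surface $M$ has compact closure $\overline M$ which is itself a compact bordered Riemann surface, so no infinite exhaustion is needed at all: one simply approximates $f$ uniformly on the given compact $\Oscr(M)$-convex set $K\subset M$ by a \emph{vertical} holomorphic map $\tilde f$ defined on a neighbourhood of $\overline M$ (possible by the Oka--Grauert principle, the fibre being $\CP^n$), and then applies Proposition \ref{prop:approx1} to $\tilde f$ on $\overline M$. The $\Cscr^1$ case is the same, with $\Cscr^1$-approximation of $f$ by $\tilde f$ in the first step. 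Your exhaustion argument treats $M$ as a general open Riemann surface, which it is not.

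The real trouble is your inductive step, which requires a Runge-type approximation theorem for \emph{horizontal} Legendrian curves: given a horizontal $f_k\in\Lscr(M_k,X)$, find a horizontal $f_{k+1}\in\Lscr(M_{k+1},X)$ close to $f_k$ on $M_k$. This is not proved in the paper, and your sketch does not close it. Proposition \ref{prop:approx1} starts from the vertical curve, where $\beta\equiv 0$ is trivially exact; here you start from a nonzero $\beta$. After Runge-approximating the components into $\Oscr(M_{k+1})$, you must kill the periods of $\beta$ over a homology basis of $M_{k+1}$. Over the old cycles (in $M_k$) the periods are small, but over the new cycles in $M_{k+1}\setminus M_k$ they need not be, so you need period-dual functions $\xi_j$ that have period $1$ over the new cycles while being small on all of $M_k$ (not just on the cycles). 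You assert this without justification, and it does not follow ``verbatim'' from Proposition \ref{prop:approx1}. You also implicitly assume that $f_k$ retains the ansatz $\tilde g^{(k)}_i=h_n^2 g_i^{(k)}$ (needed so that $\beta$ is holomorphic despite zeros of $h_n$), which holds at stage $1$ by construction but must be propagated explicitly through the induction. These points are likely repairable, but as written the argument has holes that the paper's much simpler proof avoids entirely.
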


\begin{proof}
We may assume that $M$ is a domain with smooth  boundary in a Riemann surface $\wt M$.
Let $K$ be a compact $\Oscr(M)$-convex set in $M$. We first approximate $f$ uniformly
on $K$ by a holomorphic vertical map $\tilde f\in \Oscr(\overline M,X)$; this is possible by the
Oka principle since the fibres of $X\to Z$ are projective spaces $\P(T^*_z Z)\cong \CP^n$
(see \cite[Theorem 5.4.4]{Forstneric2017E}). It remains to apply Proposition \ref{prop:approx1}
to the map $\tilde f$. If $f\in \Ascr^k(M,X)$, then we approximate $f$ in 
$\Cscr^k(M,X)$ by a holomorphic vertical map $\tilde f\in \Oscr(\overline M,X)$ and proceed as before \cite[Theorem 8.11.4]{Forstneric2017E}.
\end{proof}

Assuming that the base $Z$ is an Oka manifold, 
we now give a couple of global approximation results for Legendrian curves in $\P(T^*Z)$
parameterised by an arbitrary open Riemann surface. The first one pertains to vertical maps.

%
%
\begin{corollary}\label{cor:approx2}
If $M$ is an open Riemann surface and $Z$ is an Oka manifold, then every vertical holomorphic map 
$f:M\to X=\P(T^*Z)$ can be approximated uniformly on compacts in $M$ 
by horizontal Legendrian maps $f_1:M\to X$. If in addition $Z$ is a 
Stein manifold with the density or the volume density property, then $f_1$ can be chosen proper.
\end{corollary}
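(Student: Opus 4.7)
The plan is to use an exhaustion argument based on Proposition \ref{prop:approx1}. Exhaust $M$ by a sequence of smoothly bounded $\Oscr(M)$-convex compact bordered Riemann surfaces $M_1\Subset M_2\Subset\cdots$ with $\bigcup_j M_j=M$, assuming the given compact $K\subset M$ lies in $M_1$. For the base step, apply Proposition \ref{prop:approx1} (or Corollary \ref{cor:approx1}) to the vertical map $f$ restricted to $M_1$, producing a horizontal Legendrian $h_1:M_1\to X$ approximating $f$ on $M_1$ as closely as desired. Inductively, given a horizontal Legendrian $h_j:M_j\to X$ with nonconstant base projection $g_j=\pi\circ h_j:M_j\to Z$, I would use the Oka property of $Z$ together with the $\Oscr(M)$-convexity of $M_j$ in $M_{j+1}$ (\cite[Theorem 5.4.4]{Forstneric2017E}) to find a holomorphic approximation $g_{j+1}:M_{j+1}\to Z$ of $g_j$ on $M_j$; Cauchy estimates then promote this to $\Cscr^1$ approximation on a slightly smaller compact. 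By Proposition \ref{prop:summary}(d), the nonconstant map $g_{j+1}$ admits a holomorphic Legendrian lift $h_{j+1}:M_{j+1}\to X$. With the errors chosen summably small, the sequence $\{h_j\}$ converges uniformly on compact subsets of $M$ to the desired horizontal Legendrian map $f_1:M\to X$ approximating $f$ on $K$.

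The main obstacle is to ensure that the lift $h_{j+1}$ is close to $h_j$ on $M_j$. When $\dim Z=2$, the lift of a nonconstant map is unique and given by formula \eqref{eq:liftingcurve}, so $\Cscr^1$ approximation of the base yields $\Cscr^0$ approximation of the lift away from critical points of $g_j$; however, Proposition \ref{prop:singularity} shows that closeness genuinely fails at branch points, so I would arrange throughout the induction that each $g_j$ is an immersion on $M_j$. This is a generic property that I would preserve at each step by a small general-position perturbation of the base projection, realised as a modification of the $g_{j+1}$ produced above. When $\dim Z\geq 3$, the lift is not unique; the ambiguity is parameterised by sections of the projectivised conormal bundle $\P E\to M_{j+1}$ of $g_{j+1}$, which is a holomorphic $\CP^{n-1}$-bundle over the Stein manifold $M_{j+1}$. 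Using the $\Cscr^1$ closeness of $g_{j+1}$ to $g_j$ on $M_j$ to approximately identify the conormal bundles of the two base maps, I would transplant the section of $\P E_{g_j}$ determined by $h_j$ to a section of $\P E_{g_{j+1}}|_{M_j}$ and extend it to $M_{j+1}$ via the Oka--Grauert principle, thereby specifying a lift $h_{j+1}$ close to $h_j$ on $M_j$.

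For the second assertion, when $Z$ is Stein with the density or volume density property, I would augment each inductive step by composing $g_{j+1}$ with a suitable Andersén--Lempert automorphism of $Z$ (abundant under the density property), chosen so as to push the image of $M_{j+1}\setminus M_j$ outside the $j$-th compact of a fixed exhaustion of $Z$ while remaining close to the identity on a neighbourhood of $g_{j+1}(M_j)$. Since the base projection $\pi:X\to Z$ has compact fibres, the limit map $f_1:M\to X$ is then proper because its base projection is proper by construction.
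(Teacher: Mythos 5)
Your exhaustion scheme is structurally different from the paper's proof, which handles the passage from a neighbourhood of $K$ to all of $M$ in a \emph{single} application of the Oka principle rather than by an inductive limit. That is not in itself a problem, but there is a genuine gap in how you propose to deal with critical points of the base projection.

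The crux is the base step. Proposition~\ref{prop:approx1} produces a base projection $\tilde g$ whose components are of the form $\tilde g_i = h_n^2\, g_i$ (for $i<n$, and $\tilde g_n$ obtained by integration), where $h=(h_0,\ldots,h_n)\colon M\to\C^{n+1}_*$ lifts the given vertical component. If $h_n$ (or whichever coordinate plays this role) has zeros of multiplicity $\ge 2$ at some $x_0$, then $d\tilde g$ \emph{necessarily} vanishes at $x_0$, since both $h_n(x_0)=0$ and $dh_n(x_0)=0$ force $d\tilde g_i(x_0)=0$ for all $i$, including $\beta(x_0)=d\tilde g_n(x_0)=0$. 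These critical points are forced by the structure of $h$, not artifacts of a generic choice, and in general one cannot remove all of them by a coordinate change on $\C^{n+1}$ (the image of $[h]$ in $\CP^n$ can meet every hyperplane, and can meet some of them tangentially). You assert that ``each $g_j$ is an immersion on $M_j$'' is a ``generic property'' that you will arrange ``by a small general-position perturbation of the base projection'', but this is exactly the move that Proposition~\ref{prop:singularity} rules out: a small perturbation of a branched $g_1$ into an immersion causes the Legendrian lift to jump discontinuously at the branch points, destroying the approximation of $f$ on $K$. You cite Proposition~\ref{prop:singularity} yourself and then propose a fix that the proposition refutes. So the inductive hypothesis ``$g_j$ is an immersion'' is never established for $j=1$.

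The paper's proof handles this differently and, once you see it, more economically. It keeps the critical points of $\tilde g$ and extends $\tilde g$ in a single step to a global map $g_1\colon M\to Z$ using the Oka principle \emph{with interpolation}: $g_1$ agrees with $\tilde g$ to an order exceeding the vanishing order of $d\tilde g$ at each of its finitely many critical points in $K$. This jet-matching guarantees that the tangent cone of $g_1$ at each critical point coincides with that of $\tilde g$, so the Legendrian lift of $g_1$ (via Proposition~\ref{prop:summary}) agrees with $\tilde f$ at those points and stays close to it nearby — precisely circumventing the discontinuity identified in Proposition~\ref{prop:singularity}. There is no exhaustion, no need for $\Cscr^1$ control across a limit, and no conormal-bundle transplantation: the lift is constructed once, globally, and the non-uniqueness in the case $\dim Z\ge 3$ is irrelevant since only one lift needs to be chosen. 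The properness statement is then obtained by simply choosing $g_1$ proper (available when $Z$ is Stein with the (volume) density property), without any explicit Anders\'en--Lempert manipulation. To repair your argument you would need to import this jet-interpolation idea at the base step of your induction; the ``make $g_j$ immersive'' strategy as stated does not go through.
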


Recall that a complex manifold $Z$ is said to enjoy the {\em density property} if the Lie algebra generated 
by all $\C$-complete holomorphic vector fields is dense in the Lie algebra of all holomorphic vector fields on 
$Z$ (see Varolin \cite{Varolin2001,Varolin2000} or \cite[Sect.\ 4.10]{Forstneric2017E}; in the latter source 
and in \cite{KalimanKutzschebauch2015} the reader can find a list of known examples of such manifolds).  
Similarly one defines the {\em volume density property} of a complex manifold endowed with a holomorphic 
volume form, as well as the algebraic versions of these properties (see Kaliman and Kutzschebauch 
\cite{KalimanKutzschebauch2010IM}).  Every Stein manifold with the density property is an Oka manifold 
(see \cite[Proposition 5.6.23]{Forstneric2017E}).

\begin{proof}
Assume that $f(M)\subset X_{z^0}$ for some $z^0\in Z$; 
then $f=(z^0,h)$ with $h\colon M\to X_{z^0}\cong\CP^n$.
Let $V\subset Z$ be a contractible coordinate neighbourhood  of the point $z^0$.
Choose a compact $\Oscr(M)$-convex set $K\subset M$.
By Proposition \ref{prop:approx1} we can find a neighbourhood $U\subset M$ of $K$ and
a holomorphic Legendrian map $\tilde f = (\tilde g,h) \colon U\to X_V$ such that $\tilde g\colon U\to V$ is 
nonconstant and the vertical component of $\tilde f$ agrees with the vertical component of $f$
(with respect to the coordinates on $V$).
Let $x_1,\ldots, x_m\in K$ be the finitely many points of $K$ at which the differential $d\tilde g$ vanishes. 
Thus, there is a neighbourhood $U'\subset U$ of $K$ such that $\tilde g$ is an immersion on 
$U'\setminus  \{x_1,\ldots, x_m\}$. Since $V$ is contractible, there is a 
continuous map $M\to V$ which agrees with $\tilde g$ in some neighbourhood of $K$.
Since $Z$ is an Oka manifold, there is a 
holomorphic map $g_1:M\to Z$ which approximates $\tilde g$ as closely as desired on 
a neighbourhood of $K$ and agrees with $\tilde g$ to any given order $k_j\in\N$ 
at the point $x_j$ for each $j=1,\ldots,m$. (See \cite[Theorem 5.4.4]{Forstneric2017E}.)
Let $f_1\colon M\to X$ be a lifting of $g_1$ furnished by Proposition \ref{prop:summary}.
If the integer $k_j$ is chosen bigger than the order of vanishing of the differential
$d\tilde g$ at the point $x_j$ for every $j=1,\ldots,m$, 
then the tangent plane to $g_1$ at $x_j$ (a complex line in $T_{g_1(x_j)}Z$)  
agrees with the tangent plane of $\tilde g$ at $x_j$ for every $j=1,\ldots,m$. Hence, the 
Legendrian lifting $f_1$ of $g_1$ can be chosen to approximate $\tilde f$ (and hence $f$)
uniformly on $K$ and to agree with $\tilde f$ at each of the points $x_1,\ldots, x_m$.
We can also arrange that $f_1$ is an embedding
on $M\setminus U$ where $U$ is a neighbourhood of $K$. Indeed, the map $g_1\colon M\to Z$
can be chosen an immersion with simple double points (an embedding if $\dim Z\ge 3$)
on $M\setminus K$; see \cite[Corollary 8.9.3]{Forstneric2017E}. 
By Proposition \ref{prop:summary} (b), such a map $g_1$ admits a Legendrian lifting $f_1$ 
which is an embedding on $M\setminus K$. 

If $Z$ is a Stein manifold with the density or the volume density property, 
then the map $g_1:M\to Z$ as above can be chosen proper (see Andrist and Wold \cite{AndristWold2014} 
or \cite[Theorem 9.8.6]{Forstneric2017E}). Hence, any Legendrian lifting 
$f_1\colon M\to X$ of $g_1$ is also proper.
\end{proof}

\begin{problem}
In the context of Corollary \ref{cor:approx2}, is it possible to connect 
a given vertical holomorphic map $f\colon M\to X$ to a horizontal Legendrian 
map $f_1:M\to X$ by a homotopy of Legendrian maps $M\to X$ 
as in Proposition \ref{prop:approx1}?
\end{problem}

The proof of Corollary \ref{cor:approx2} also yields the following result concerning 
the approximation of horizontal Legendrian curves when the base manifold $Z$ is Oka
or a Stein manifold with the density property. 
(We refer to \cite[Sects.\ 4.10 and 5.4]{Forstneric2017E} for these notions.)

%
%
\begin{corollary}\label{cor:approx3}
Assume that $M$ is an open Riemann surface and $Z$ is an Oka manifold.
Let $K$ be a compact $\Oscr(M)$-convex subset of $M$, and let $f\colon U\to X=\P(T^*Z)$
be a horizontal holomorphic Legendrian map from an open neighbourhood $U\subset M$ of $K$.
Assume that the map $g=\pi\circ f\colon U\to Z$ extends from a smaller neighbourhood of $K$ to a 
continuous map $M\to Z$. Then, $f$ can be approximated uniformly on $K$ by horizontal holomorphic 
Legendrian maps $\tilde f\colon M\to X$. If in addition $Z$ is a Stein manifold with the density property,
then $\tilde f$ can be chosen proper and an embedding on $M\setminus K$.
\end{corollary}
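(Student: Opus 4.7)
The plan is to adapt the argument used in the proof of Corollary \ref{cor:approx2}, with the difference that here the starting data is horizontal rather than vertical. The horizontal base projection $g=\pi\circ f\colon U\to Z$ is by hypothesis a nonconstant holomorphic map extending continuously to $M$, so everything is set up for a direct application of the Oka property of $Z$ followed by Legendrian lifting via Proposition \ref{prop:summary}.

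First, I would list the (finitely many) critical points $x_1,\ldots,x_m\in K$ of $g$ and fix positive integers $k_j$ strictly larger than the order of vanishing of $dg$ at $x_j$. By the parametric Oka principle with jet interpolation \cite[Theorem 5.4.4]{Forstneric2017E}, applied to the continuous extension of $g$, I obtain a nonconstant holomorphic map $g_1\colon M\to Z$ that approximates $g$ uniformly on $K$ as closely as desired and agrees with $g$ to order $k_j$ at each $x_j$. In particular the tangent cones of $g_1$ and $g$ coincide at each $x_j$, so the conormal sheaves (see the discussion preceding Proposition \ref{prop:summary}) agree in a strong sense near these points.

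Second, I would lift $g_1$ to a horizontal holomorphic Legendrian map $\tilde f\colon M\to X$ by Proposition \ref{prop:summary} (d). When $\dim Z=2$, the lifting is unique and given explicitly by \eqref{eq:liftingcurve}; the uniform approximation of $g$ by $g_1$, combined with the high-order tangency at the critical points, then forces $\tilde f$ to approximate $f$ uniformly on $K$. When $\dim Z\geq 3$, liftings correspond to holomorphic sections of the projectivised conormal bundle $\P E_{g_1}$; since every holomorphic vector bundle on an open Riemann surface is trivial \cite[Theorem 5.3.1]{Forstneric2017E}, this bundle is a trivial $\CP^{n-1}$-bundle. The section of $\P E_g$ defining $f$ over a neighbourhood of $K$ can be transported to a close section of $\P E_{g_1}$ (using the jet agreement at the critical points), and then extended holomorphically to all of $M$ by the Oka property of $\CP^{n-1}$, yielding the desired lifting. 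For the final assertion, when $Z$ is Stein with the density property, choose $g_1$ to additionally be proper and an immersion with simple double points on $M\setminus K$ using \cite[Theorem 9.8.6]{Forstneric2017E} and \cite[Corollary 8.9.3]{Forstneric2017E}; any Legendrian lifting of such $g_1$ is then automatically proper, and Proposition \ref{prop:summary} (b) (or a general position argument when $\dim Z\geq 3$) arranges that $\tilde f$ is an embedding on $M\setminus K$.

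The main obstacle is that, as Proposition \ref{prop:singularity} shows, the operation $g\mapsto \tilde f$ is \emph{discontinuous} at branch points of $g$. The jet interpolation at the points $x_1,\ldots,x_m$ is designed precisely to bypass this: by arranging that $g_1$ shares not only the location but also the full higher-order singularity data of $g$ at each critical point, the local Legendrian liftings of $g$ and $g_1$ can be made to match there, so uniform approximation can be pushed across the singular fibres. Once this local matching is in place, the global extension from a neighbourhood of $K$ to all of $M$ is a routine consequence of the Oka-Grauert principle on the open Riemann surface $M$.
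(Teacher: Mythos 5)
Your proposal is correct and follows essentially the same route as the paper, which reduces Corollary~\ref{cor:approx3} to the argument of Corollary~\ref{cor:approx2}: use the Oka principle with jet interpolation at the critical points of $g$ to produce a global approximating map $g_1\colon M\to Z$ (proper, and in general position off $K$, when $Z$ has the density property), and then take a Legendrian lifting, the jet agreement at branch points being exactly what is needed to bypass the discontinuity from Proposition~\ref{prop:singularity}. Your added discussion of the $\dim Z\geq 3$ case via sections of the trivialised conormal bundle $\P E_{g_1}$ fills in a detail the paper leaves implicit, but the overall strategy is the same.
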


\begin{proof}
The conditions imply that the projection $g=\pi\circ f\colon U\to Z$ can be approximated 
uniformly on a neighbourhood of $K$ by a globally defined holomorphic map $\tilde g\colon M\to Z$
which agrees with $g$ to an arbitrary finite order at each of the finitely many branch points
of $g$ on $K$. If in addition $Z$ is a Stein manifold with the density property,
then such $g$ can be chosen proper. 
The rest of the argument is exactly as in the proof of Corollary \ref{cor:approx2}.
\end{proof}

\begin{problem}
Let $Z$ be a complex manifold of dimension $n+1\ge 3$ and $M$ be a compact strongly
pseudoconvex domain in a Stein manifold $S$ with $\dim S\le n$.
Assume that $f\colon M\to X=\P(T^*Z)$ is a vertical holomorphic map.
Is it possible to approximate $f$ uniformly on $M$ by horizontal isotropic
holomorphic maps $M\to X$?
\end{problem}

%
%

\section{A parametric general position theorem for Legendrian curves in $\P(T^*Z)$} 
\label{sec:HPLegendrian}

In this section we prove a parametric version of Proposition \ref{prop:approx1}
provided that all vertical Legendrian maps in the given family are nondegenerate in the following sense.

%
%
\begin{definition}\label{def:nondegenerate}
Let $M$ be a Riemann surface. A vertical holomorphic map $f\colon M\to X=\P(T^*Z)$
with $f(M)\subset X_{z^0}=\P(T^*_{z^0} Z)$ for some point $z^0\in Z$ is {\em degenerate} 
if the image $f(M)$ is contained in a proper projective linear subspace of $\P(T^*_{z^0}Z)\cong \CP^{n}$; 
otherwise $f$ is {\em nondegenerate}.  In particular, if $\dim Z=2$ then a vertical map $M\to \P(T^*M)$ 
is degenerate if and only if it is constant.
\end{definition}

Let $z=(z_0,\ldots, z_n)$ be holomorphic coordinates on a neighbourhood $U\subset Z$
of the point $z^0\in Z$. A vertical holomorphic curve $f\colon M\to X$ with image in 
the fibre $X_{z^0}$ is of the form $f(x)=(z^0,[h_0(x):\cdots : h_n(x)])$ for some holomorphic map
$h=(h_0,\ldots, h_n)\colon M\to \C^{n+1}_*$, and $f$ is 
degenerate if and only if the image of $h$ is contained in a proper complex subspace of $\C^{n+1}$. 
The set of all degenerate holomorphic maps $M\to \CP^n$ 
from an open or compact bordered Riemann surface $M$ is closed and nowhere dense 
in the space of all holomorphic maps $\Oscr(M,\CP^n)$. 


In the sequel we assume that $M$ is a compact bordered Riemann surface, and we 
use the same notation $\Oscr(M,X)$ and conventions as in Sect.~\ref{sec:approx}.

%
%
\begin{theorem} \label{th:B}
Assume that $Z$ is a complex manifold, $X=\P(T^*Z)$ is the complexified cotangent bundle of $Z$ with 
its canonical complex contact structure, and $Q\subset P$ are compact sets in a Euclidean space $\R^m$
such that every point $p_0\in P$ has a basis of connected contractible neighbourhoods.  
Let $M$ be a compact bordered Riemann surface.
Assume that $f \colon M\times P\to X$ is a continuous map satisfying the following conditions:
\begin{enumerate}[\rm (a)]
\item $f_p=f(\cdotp,p)\colon M\to X$ is a holomorphic Legendrian map for every $p\in P$, 
\item $f_p$ is horizontal for every $p\in Q$, and
\item every vertical map $f_p$ in the family is nondegenerate (see Definition \ref{def:nondegenerate}).
\end{enumerate}
Then, there exists a homotopy $f^t \colon M\times P\to X$ $(t\in [0,1])$ such that 
$f^t_{p} := f^t(\cdotp,p)\in \Lscr(M,X)$ for every  $(p,t)\in P\times [0,1]$ 
and the following conditions hold:
\begin{enumerate}[\rm (1)]
\item $f^t_{p} =f_p$\ \ for every $(p,t)\in (P\times \{0\}) \cup (Q\times [0,1])$, 
\item each vertical map $f^t_{p}$ for $(p,t)\in P\times [0,1]$ is nondegenerate, and
\item $f^1_{p} \in  \Lscr(M,X)$  is horizontal for every $p\in P$. 
\end{enumerate}
\end{theorem}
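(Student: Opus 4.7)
The plan is to follow the chart-patching induction of the proof of Theorem \ref{th:A}, using Remark \ref{rem:approx1} as the parametric Legendrian deformation tool. Set
\[
	P_0:=\{p\in P:f_p \text{ is vertical}\}.
\]
This set is closed in $P$ and disjoint from $Q$, since horizontality (nonconstancy of $g_p:=\pi\circ f_p$) is an open condition in $\Oscr(M,Z)$. For each $p_0\in P_0$, writing $z^0=\pi(f_{p_0}(M))$, we use the contractible-neighbourhood hypothesis on $P$ to choose a contractible chart $U_{p_0}\subset Z$ around $z^0$ and a connected contractible open set $V'_{p_0}\subset P\setminus Q$ containing $p_0$ such that for every $p\in\overline{V'_{p_0}}$: (i) $f_p(M)\subset X_{U_{p_0}}$; (ii) the fibre component $\tilde h_p\colon M\to\CP^n$ in $U_{p_0}$'s trivialisation is nondegenerate; and, after a linear change of the fibre coordinates, (iii) a fixed index pair $(i,n)$ satisfies $h_{p,n}\not\equiv 0$ and $h_{p,i}/h_{p,n}$ nonconstant. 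Clause (ii) at $p_0$ is hypothesis (c); it extends to a neighbourhood because nondegeneracy is open in $\Oscr(M,\CP^n)$ for compact bordered $M$ (compactness of $M$ and of the dual projective space provides a positive quantitative lower bound on nondegeneracy). Compactness of $P_0$ yields a finite subcover $V'_1,\ldots,V'_r$ with associated charts $U_j$ and relatively compact shrinkings $V_j\Subset V''_j\Subset V'_j$ still covering $P_0$.

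Over each $V'_j$, contractibility and the Oka principle (\cite[Lemma 5.1]{AlarconForstnericLopez2017JGA}) furnish a continuous family of lifts $h_p\colon M\to\C^{n+1}_*$, and Remark \ref{rem:approx1} produces, for any prescribed $\varepsilon_j>0$, a continuous family $(p,s)\mapsto\tilde f^s_p=(g_p+s\tilde g_p,\tilde h_p)$, $p\in V'_j$, $s\in[0,1]$, of holomorphic Legendrian curves into $X_{U_j}$ equal to $f_p$ at $s=0$, horizontal for every $s\in(0,1]$, preserving the fibre component $\tilde h_p$, and with $\|\tilde g_p\|\le\varepsilon_j$ uniformly in $p$. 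Since the Legendrian relation $\sum_i h_{p,i}\,d(g_{p,i}+s\tilde g_{p,i})=0$ is linear in $s$ at fixed $h_p$, for any continuous cut-off $\chi_j\colon P\to[0,1]$ equal to $1$ on $V_j$ and supported in $V''_j$, the scaled family $(p,s)\mapsto(g_p+s\chi_j(p)\tilde g_p,\tilde h_p)$ still lies in $\Lscr(M,X_{U_j})$. Build the required homotopy by induction on $j\in\{1,\ldots,r\}$: on the time interval $[(j-1)/r,j/r]$, apply this scaled homotopy to the family at the end of step $j-1$ (re-expressed in $U_j$'s trivialisation) for $p\in V'_j$, and leave it unchanged for $p\notin V''_j$; reparameterise time to run over $[0,1]$ at the end.

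The main obstacle is verifying that each inductive step preserves what has already been achieved: horizontality on $V_1\cup\cdots\cup V_{j-1}$ and on $Q$, the Legendrian property in the current chart $U_j$, and nondegeneracy of the fibre component of the entering family in $U_j$'s coordinates. All three follow from taking the $\varepsilon_k$ sufficiently small. Each step-$k$ modification is an $\varepsilon_k$-perturbation of the base projection in $U_k$; horizontality is open and hence persists; the Legendrian property is inherited because the modification is itself Legendrian; and in any other chart $U_j$ the fibre component changes only by an $O(\varepsilon_k)$ action of the bundle transition functions on the fixed, originally nondegenerate, fibre component, so nondegeneracy — itself open — is preserved. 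Condition (2) is automatic: the fibre component is never altered appreciably and was arranged to be nondegenerate throughout the region where any deformation takes place, while on $P\setminus\bigcup V''_j$ no deformation occurs. After $r$ steps, $f^1_p$ is horizontal on $\bigcup V_j\supset P_0$ by construction and remains horizontal on $P\setminus\bigcup V''_j$, where $f^1_p=f_p$; together these sets cover $P$, so conditions (1)--(3) hold.
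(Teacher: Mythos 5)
Your proof is correct and follows essentially the same chart-patching induction as the paper's argument: the same set $P_0$, the same finite cover of $P_0$ by contractible parameter neighbourhoods with coordinate charts in $Z$, the parametric Remark~\ref{rem:approx1} as the deformation engine, and cut-off functions to localise each step (the paper writes the cut-off as a time reparametrisation $\tilde f^{\,t\chi(p)}_p$, you scale the base increment $s\chi_j(p)\tilde g_p$; these coincide given the linear form of the homotopy, which you correctly exploit). The only imprecision is the closing line ``together these sets cover $P$'': $\bigcup V_j$ and $P\setminus\bigcup V''_j$ do \emph{not} cover $P$, but the missing region $\bigcup V''_j\setminus\bigcup V_j$ lies in $P\setminus P_0$, where $f_p$ is horizontal and your earlier persistence-of-horizontality argument applies, so the conclusion still holds.
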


Theorem \ref{th:B} also holds, with the same proof, if a holomorphic map $M\to X$ is understood as a map
of class $\Ascr^r(M,X)$ for some $r\in \{1,2,\ldots,\infty\}$.
On the other hand, we do not know whether it still holds
if the family $f_p$ includes degenerate vertical curves. The main problem is 
to handle the proof of  Proposition \ref{prop:approx1} with a continuous dependence
on the parameter in a neighbourhood of such curves. 

Theorem \ref{th:B} immediately implies the following result, where $\Lscr_\mathrm{hor}(M,X)$ 
denotes the space of all horizontal holomorphic Legendrian curves $M\to X$. 

\begin{corollary}
The inclusion
\[
	\Lscr_\mathrm{hor}(M,X) \longhookrightarrow 
	\Lscr(M,X)\setminus\{\textrm{degenerate vertical curves}\}
\]
is a weak homotopy equivalence.
\end{corollary}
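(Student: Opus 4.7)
The plan is an iterative parametric approximation argument, structurally parallel to the proof of Theorem~\ref{th:A} but arranged so as to preserve the Legendrian condition at every stage. The central analytic observation is that, in a local trivialisation $\pi^{-1}(U)\cong U\times\CP^n$ of the cotangent projectivisation over a contractible chart $U\subset Z$, a Legendrian family $f_p=(g_p,\tilde h_p)$ admits Legendrian deformations of the form
\[
	f^t_p=\bigl(g_p+t\chi(p)\tilde g_p,\;\tilde h_p\bigr)
\]
whenever $\tilde g_p\colon M\to\C^{n+1}$ satisfies the \emph{linear} Legendrian compatibility equation $\sum_{i=0}^n h_{p,i}\,d\tilde g_{p,i}=0$ for a lift $h_p$ of $\tilde h_p$, and $\chi\colon P\to[0,1]$ is any cutoff depending only on the parameter. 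Because $d\chi$ carries no component along $M$, the construction is stable under parameter-space patching, which is the missing ingredient needed to promote Proposition~\ref{prop:approx1} from a single-map statement to a parametric one.

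First I would isolate the closed set $P_0=\{p\in P:f_p\text{ is vertical}\}$, which is contained in $P\setminus Q$ by hypothesis~(b). Using compactness of $P_0$ together with the assumption that each point of $P$ has a basis of contractible neighborhoods, I would cover $P_0$ by finitely many nested triples $V'_j\Subset V''_j\Subset V_j$ of open sets in $P\setminus Q$, $j=1,\ldots,r$, each $V_j$ contractible, such that:
\begin{enumerate}[\rm(i)]
\item for every $p\in\overline{V_j}$, $f_p(M)$ lies in a trivialising chart $\pi^{-1}(U_j)$ with $U_j\subset Z$ a contractible coordinate neighborhood;
\item for every $p\in\overline{V_j}$, the fibre component $\tilde h_p\colon M\to\CP^n$ of $f_p$ in this chart is nondegenerate, and the same indices $i,n$ from the hypotheses of Remark~\ref{rem:approx1} work uniformly on $\overline{V_j}$.
\end{enumerate}
Property~(ii) is an open condition in the compact-open topology, and by hypothesis~(c) it holds on $P_0$; shrinking the $V_j$ if necessary forces it on the closures. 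Contractibility of $V_j$ together with the Oka principle produces a continuous lift $h_p\colon M\to\C^{n+1}_*$ of $\tilde h_p$ for $p\in\overline{V_j}$.

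Then I would process the chart neighborhoods one at a time. At stage $j$, starting from the family $f^{j-1}$ produced by the previous stages, I would apply the parametric analogue of Proposition~\ref{prop:approx1} stated in Remark~\ref{rem:approx1} over $\overline{V_j}$. This produces a continuous family of small nonconstant holomorphic maps $\tilde g_p\colon M\to\C^{n+1}$ solving $\sum_i h_{p,i}\,d\tilde g_{p,i}=0$; nonconstancy of the free functions $g_{p,1},\ldots,g_{p,n-1}$ entering the construction via \eqref{eq:main2} is guaranteed by invoking Theorem~\ref{th:A}. Selecting a continuous cutoff $\chi_j\colon P\to[0,1]$ with $\chi_j\equiv1$ on $\overline{V'_j}$ and $\mathrm{supp}\,\chi_j\subset V''_j$, I would set
\[
	f^{j,t}_p=\bigl(g^{j-1}_p+t\chi_j(p)\tilde g_p,\;\tilde h^{j-1}_p\bigr)\quad\text{on }\pi^{-1}(U_j),
\]
for $p\in V_j$, and $f^{j,t}_p=f^{j-1,1}_p$ for $p\in P\setminus V_j$. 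The resulting family is Legendrian by the observation above, equals $f^{j-1}$ wherever $\chi_j$ vanishes, and at $t=1$ is horizontal on $\overline{V'_j}$ (because $\tilde g_p$ is nonconstant there) while remaining horizontal elsewhere on $V_j$ (because $g^{j-1}_p$ was already nonconstant and $\tilde g_p$ is small). Since the fibre component is preserved at every $t$, the nondegeneracy required by conclusion~(2) propagates automatically.

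After $r$ stages, the horizontal locus contains $\overline{V'_1}\cup\cdots\cup\overline{V'_r}\supset P_0$ together with $Q$ and with every parameter at which $f$ was already horizontal, i.e.\ all of $P$; concatenating the stage homotopies in $t$ yields the required $f^t_p$. The chief obstacle, and the precise reason nondegeneracy is imposed, lies in Remark~\ref{rem:approx1}: producing a continuous family $\tilde g_p$ with $\sum_i h_{p,i}\,d\tilde g_{p,i}=0$ on all of $\overline{V_j}$ requires the holomorphic $1$-form $\omega_p=h_{p,n}^2\,d(h_{p,0}/h_{p,n})$ of \eqref{eq:omega1} to be nontrivial for every $p$, which is exactly nonconstancy of $h_{p,0}/h_{p,n}$ and is forced by nondegeneracy. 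Without this, the period matrix in \eqref{eq:omega2} could degenerate along a subset of the parameter space and no continuous choice of corrections $t_1,\ldots,t_l$ in \eqref{eq:g0} would exist; this is the analytic difficulty alluded to by the authors in the discussion following the statement.
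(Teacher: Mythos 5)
Your argument is essentially a re-derivation of Theorem~\ref{th:B}, which is exactly how the paper obtains this corollary (the paper simply states it follows immediately from Theorem~\ref{th:B}); your chart-by-chart construction — covering the vertical parameter locus by contractible charts, invoking the parametric form of Proposition~\ref{prop:approx1} via Remark~\ref{rem:approx1}, and patching with parameter-only cutoffs so that the linear compatibility equation $\sum_i h_{p,i}\,d\tilde g_{p,i}=0$ is preserved — matches the paper's proof of that theorem. You should, however, add the routine final step you elide: apply the parametric deformation result with $(P,Q)=(S^k,\varnothing)$ for surjectivity on $\pi_k$ and with $(P,Q)=(D^{k+1},S^k)$ for injectivity, which is what converts Theorem~\ref{th:B} into the stated weak homotopy equivalence.
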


In the proof of Theorem \ref{th:B} we shall use the following observation.

%
%
\begin{lemma}\label{lem:nonconstant}
If a holomorphic map $f=(z^0,[h_0:\cdots : h_n])\colon M\to X_{z^0}$  is nondegenerate, 
then none of the component functions $h_i$ is identically zero on $M$ and each quotient $h_i/h_j$ 
for $i\ne j$ is nonconstant on $M$.
\end{lemma}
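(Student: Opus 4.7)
The plan is to prove the contrapositive of both assertions simultaneously, observing that each possible failure produces a proper projective linear subspace of $\P(T^*_{z^0}Z) \cong \CP^n$ containing the image $f(M)$.

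First, suppose that some component function $h_i$ vanishes identically on $M$. Then at every point $x \in M$ the homogeneous coordinates $[h_0(x):\cdots:h_n(x)]$ satisfy $\zeta_i = 0$, so $f(M)$ is contained in the projective hyperplane
\[
H_i = \{[\zeta_0:\cdots:\zeta_n] \in \CP^n : \zeta_i = 0\},
\]
and hence $f$ is degenerate, contradicting the hypothesis. This disposes of the first claim and, in particular, guarantees that the meromorphic quotients $h_i/h_j$ are well defined and not identically zero.

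Next, assume all $h_k \not\equiv 0$ and suppose toward a contradiction that $h_i/h_j$ is constant on $M$ for some pair $i \neq j$, say $h_i/h_j \equiv c \in \C$. Since both $h_i$ and $h_j$ are holomorphic on the connected surface $M$, the identity principle yields $h_i = c\, h_j$ identically. Therefore, at every $x \in M$ the homogeneous coordinates of $f(x)$ satisfy the linear equation $\zeta_i - c\zeta_j = 0$, so $f(M)$ lies in the projective hyperplane
\[
\{[\zeta_0:\cdots:\zeta_n] \in \CP^n : \zeta_i - c\zeta_j = 0\},
\]
again contradicting nondegeneracy.

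There is no real obstacle here; the argument is a direct application of the definition of projective linear degeneracy and the identity principle. The only mild care is to note that ``$h_i/h_j$ is nonconstant'' is meaningful as a statement about the meromorphic quotient precisely because the first part of the lemma ensures the denominator is not identically zero, so the two conclusions are logically ordered and jointly equivalent to the absence of any linear relation among the components of $(h_0,\ldots,h_n)$ of the two specific forms ``$h_i = 0$'' or ``$h_i - c\,h_j = 0$.''
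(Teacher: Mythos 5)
Your proof is correct and takes essentially the same approach as the paper: the paper's one-line argument observes that each equation $h_i=0$ or $h_i/h_j=\mathrm{const}$ defines a projective hyperplane in $\CP^n$ containing $f(M)$, and you simply spell out that contrapositive in detail, including the (routine but worth-noting) remark that $h_j\not\equiv 0$ rules out the constant $\infty$.
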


\begin{proof}
Each equation $h_i=0$, or $h_i/h_j=\textrm{constant}$ for $i\ne j$, determines a  projective subspace 
of $\CP^n$, so a map satisfying such an equation is degenerate.
\end{proof}

\begin{remark}\label{rem:nondeg}
We record another observation for future reference. 
Let $U\subset Z$ be a contractible coordinate neighbourhood of a point $z^0\in Z$, and
let $f=(z^0,h)\colon M\to X$ be a vertical holomorphic map from a 
compact bordered Riemann surface. Any holomorphic map $\tilde f\colon M\to X$ sufficiently close to 
$f$ is contained in $X_U$, so it is of the form $\tilde f=(\tilde g,\tilde h)$ with respect 
to the coordinates on $U$, with $\tilde h\colon M\to \CP^n$. If $f$ is nondegenerate 
and $\tilde f$ is close enough to $f$, then the vertical component $\tilde h\colon M\to\CP^n$ 
of $\tilde f$ is also nondegenerate. 
\qed\end{remark} 

\begin{proof}[Proof of Theorem \ref{th:B}]
Let $\pi:X=\P(T^*Z)\to Z$ denote the base projection. Write 
\[
	g_p=\pi \circ f_p: M\to Z,\qquad p\in P,
\] 
and set
\[
	P_0=\{p\in P: f_p\ \ \text{is vertical}\} = \{p\in P: g_p\ \ \text{is constant}\}.
\]
Note that $P_0$ is a compact set disjoint from $Q$.
If $p\in P_0$ and $q\in P$ is close to $p$, then $g_q(M)$ is contained in a coordinate
neighbourhood of the point $g_p(M)$ in $Z$. Recall that the parameter 
space $P$ is assumed to be locally contractible. Hence, by compactness of $Q$ 
there are finitely many pairs of contractible open sets $V'_j\Subset V_j\subset P$ 
and contractible coordinate neighbourhoods $U_j\subset Z$ for $j=1,\ldots,r$ such that 
\[
	g_p(M)\subset U_j\ \ (p\in \overline V_j,\ j=1,\ldots,r), \qquad P_0 \,\subset\, \bigcup_{j=1}^r V'_j. 
\]

Fix an index $j\in\{1,\ldots, r\}$. Let $z=(z_0,\ldots,z_n)$ be holomorphic coordinates  on 
$U_j\subset Z$, and let $[\zeta_0:\cdots:\zeta_n]$ be the corresponding homogeneous coordinates
on $\P(T_{z}^* Z)\cong \CP^n$. The contact structure
on $X_{U_j} =\P(T^* U_j)$ is then given by the kernel of the $1$-form $\eta= \sum_{i=0}^n \zeta_i \, dz_i$
\eqref{eq:eta}. For every $p\in \overline V_j$ we have that 
\[
	f_p=(g_p,\tilde h_p),\qquad g_p:M\to U_j,\qquad \tilde h_p:M\to \CP^n.
\]
Let $\tau\colon \C^{n+1}_*\to \CP^n$ denote the standard projection.
Since $M$ has the homotopy type of a bouquet of circles and the set $V_j\subset P$ 
is contractible, there is a continuous family of holomorphic maps 
\begin{equation}\label{eq:hp}
	h_p=(h_{p,0},\ldots, h_{p,n}) \colon M\to \C^{n+1}_*,\qquad p\in V_j,
\end{equation}
such that $\tau\circ h_p=\tilde h_p$ for all $p\in V_j$
(see \cite[Lemma 5.1]{AlarconForstnericLopez2019JGEA}).

The desired homotopy $f^t_p$ satisfying Theorem \ref{th:B} will be constructed in $r$ steps.
The scheme of proof is similar to the proof of Theorem \ref{th:A}.

Let us consider the initial step with $j=1$. Since the map $\tilde h_p\colon M\to\CP^n$ 
is assumed to be nondegenerate for every $p\in P_0\cap V_1$, 
there exists a neighbourhood $W_1\Subset V_1$ of the compact set $P_0 \cap \overline{V'_1}$ 
such that for every point $p\in W_1$ the map $\tilde h_p$  is nondegenerate
and its lifting $h_p\colon M\to \C^{n+1}_*$  \eqref{eq:hp} 
satisfies the assumptions in Remark \ref{rem:approx1}.
Hence, the remark (which is a parametric version of Proposition \ref{prop:approx1}) 
furnishes a homotopy of Legendrian curves $\tilde f^t_p\in \Lscr(M,X_{U_1})$ $(p\in W_1,\ t\in [0,1])$ 
satisfying the following conditions:
\begin{enumerate}[\rm (i)]
\item $\tilde f^0_p=f_p$, 
\item the map $\tilde f^t_p$ is horizontal and uniformly close to $f_p$ for every $t\in (0,1]$
(also uniformly with respect to $p\in W_1$ and $t\in [0,1]$), and 
\item every map $\tilde f^t_p$ $(p\in W_1,\ t\in [0,1])$ has the same vertical component as $f_p$,
and this component is nondegenerate (by the definition of the set $W_1$).
\end{enumerate}

We now explain how modify the homotopy $\{\tilde f^t_p : p\in W_1,\ t\in [0,1]\}$ 
in order to make it constant (equal to $\{f_p\}$) for the parameter values $p$ near $bW_1$. 
At the same time, we shall take care not to create any degenerate vertical maps 
in the resulting family.

Choose open sets $W'_1,W''_1\subset P$ such that 
\begin{equation}\label{eq:W1}
	 P_0\cap \overline {V'_1}\, \subset\, W'_1 \,\Subset\, W''_1 \,\Subset\, W_1,
\end{equation}
and then choose a compact set $Q_0\subset P$ such that
\begin{equation}\label{eq:Q0}
	Q\subset \mathring Q_0,\qquad P_0\cap Q_0=\varnothing,\qquad 
	P\setminus Q_0 \,\subset\, W'_1 \cup \bigcup_{j=2}^r V'_j.
\end{equation}
Let $\chi\colon P\to [0,1]$ be a continuous function satisfying
\begin{equation}\label{eq:chi}
	\chi=1\ \text{on}\ W'_1 \setminus Q_0  \quad \text{and}
	\quad \chi=0\  \text{on}\ (W_1\cap Q) \cup (W_1\setminus W''_1).
\end{equation}
In particular, we have that $\chi=1$ on $P_0\cap \overline {V'_1}$ 
in view of \eqref{eq:W1} and \eqref{eq:Q0}.
We define a new homotopy $f^t_p\in\Lscr(M,X)$ $(p\in W_1)$ by
\begin{equation}\label{eq:ftp}
	f^t_p= \tilde f^{t \chi(p)}_p,\qquad p\in   W_1, \ t\in [0,1].  
\end{equation}
Clearly, $f^0_p=f_p$ for all $p\in   W_1$.
Note also that the vertical component of $f^t_p$ equals the vertical component of $f_p$ 
by condition (iii) above, i.e., it is independent of $t\in [0,1]$.

Since $\chi=0$ on $W_1\setminus W''_1$ (see \eqref{eq:chi}), 
the homotopy $f^t_p$ is independent of $t$ there, so it extends to all $p\in P$ by setting 
$f^t_p=f_p$ for $p\in P\setminus W_1$ and $t\in[0,1]$. 

For $p\in \overline{W'_1} \setminus Q_0$ we have that $\chi(p)=1$ (see the first condition in \eqref{eq:chi}) 
and hence $f^t_p = \tilde f^t_p$ is horizontal for all $t\in (0,1]$ by condition (ii).
 
For every $p\in W_1\cap Q_0$ the map $f_p$ is horizontal by the definition of $Q_0$.
Assuming as we may that $\tilde f^t_p$ approximates $f_p$ sufficiently closely 
for every $t\in [0,1]$ (see condition (ii)), all maps $\tilde f^t_p$ remain horizontal,
and by \eqref{eq:ftp} the maps $f^t_p$ ($t\in [0,1]$) are also horizontal.

Thus, the curve $f^1_p\in \Lscr(M,X)$ is horizontal for all $p\in Q'_1:=Q_0\cup \overline{W'_1}$. 
It follows that 
\[
	P_1:=\left\{ p\in P : f^1_p\ \text{is vertical} \right\} 
	\,\subset\, P\setminus Q'_1 \,\subset\, \bigcup_{j=2}^r V'_j. 
\]
(In the last inclusion we also used the third condition in \eqref{eq:Q0}.)
Furthermore, the proof shows that any vertical map in the family $f^t_p$ $(p\in W_1,\ t\in [0,1])$ 
is nondegenerate since it equals the vertical component of $f_p$ which is 
nondegenerate by the definition of $W_1$.

In the second step we apply the same argument to the family $f^1_p$ and the set $V_2$.
First, we choose a neighbourhood $W_2 \Subset V_2$ 
of the compact set $P_1 \cap \overline{V'_2}$ such that for every point $p\in W_2$
the map $h^1_p\colon M\to \C^{n+1}_*$  \eqref{eq:hp} (a lifting of 
the vertical component $\tilde h^1_p$ of $f^1_p$) satisfies the assumptions 
in Remark \ref{rem:approx1}. Next, choose open sets $W'_2,W''_2\subset P$ such that 
\[
	 P_1\cap \overline {V'_2} \subset W'_2 \Subset W''_2 \Subset W_2,
\]
and then choose a compact set $Q_1\subset P$ such that
\[
	Q'_1 \subset \mathring Q_1,\qquad P_1\cap Q_1=\varnothing,\qquad 
	P\setminus Q_1 \,\subset\, W'_2 \cup \bigcup_{j=3}^r V'_j.
\]
By following the same arguments as in step 1, we obtain a new 
homotopy  $f^t_p\in \Lscr(M,X)$ $(p\in P,\ t\in[1,2])$ such that $f^2_p$ is horizontal for all 
$p\in Q_2:=Q_1\cup\overline {W'_2}$ and $f^2_p=f^1_p$ for all $p\in P\setminus W_2$,
and the homotopy does not contain any degenerate vertical maps. 

Clearly this construction can be continued inductively.
After $r$ steps of this kind we obtain a family $f^r_p\in \Lscr_\mathrm{hor}(M,X)$ $(p\in P)$ 
of horizontal Legendrian maps such that $f^r_p=f_p$ for all $p\in Q_0$, 
as well as a homotopy from $f_p$ to $f^r_p$. 
Renaming $f^r_p$ to $f^1_p$ we obtain the desired conclusion.
\end{proof}

%
%

\section{The h-principle for closed holomorphic Legendrian curves}
\label{sec:h-principle}

As before, let $X$ be the projectivised cotangent bundle of a complex manifold $Z$ of dimension 
$n+1\geq 2$, endowed with the standard complex contact structure $\xi\subset TX$
(see Sect.\ \ref{sec:intro}).  Let $\pi:X\to Z$ be the base projection.  
Let $M$ be a compact bordered Riemann surface, sitting as a smoothly bounded compact domain in an 
ambient noncompact surface $\widetilde M$.  We denote by $\Oscr(M, X)$ the space  of germs of 
holomorphic maps from open neighbourhoods of $M$ in $\widetilde M$ into $X$. We call such germs 
{\em closed holomorphic curves} in $X$ (modelled on $M$).  
Let $\Lscr(M,X)$ be the closed subspace of Legendrian curves in $\Oscr(M, X)$.

The space $\Oscr(M, X)$ carries the usual colimit topology defined as follows.  
Let $(U_k)_{k\geq 1}$ be a decreasing basis of connected open neighbourhoods of $M$ in 
$\widetilde M$ such that $U_{k+1}$ is relatively compact in $U_k$ for all $k\geq 1$.  
The colimit topology on $\Oscr(M,X)$ is the finest topology that makes all the inclusions 
$j_k:\Oscr(U_k,X) \hookrightarrow \Oscr(M, X)$ continuous.

The following result will be proved at the end of this section. 

\begin{proposition}  \label{p:map-from-compact}
Let $K$ be a sequentially compact first countable space, $X$ be a complex manifold, 
and $f:K\to \Oscr(M, X)$ be a continuous map.  
Then there are an integer $k\geq 1$ and a continuous map $g:K\to \Oscr(U_k, X)$ such that $f=j_k\circ g$.  
\end{proposition}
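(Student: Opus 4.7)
The plan is to argue in two steps: first that $f$ factors set-theoretically through some $\Oscr(U_k,X)$, and then that the factorisation is continuous. Write $A_k = j_k(\Oscr(U_k,X))$, so that $A_1\subset A_2\subset\cdots$ and $\Oscr(M,X)=\bigcup_k A_k$. Note that each $j_k$ is injective: since $U_k$ is connected, the identity theorem forces two holomorphic maps agreeing on a neighbourhood of $M$ to agree throughout $U_k$.

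For the set-theoretic factorisation, I would proceed by contradiction. If $f(K)\not\subset A_k$ for every $k$, pick $p_k\in K$ with $f(p_k)\notin A_k$. Sequential compactness of $K$ yields a subsequence $p_{k_n}$ converging to some $p\in K$, and continuity of $f$ gives $f(p_{k_n})\to f(p)$ in the colimit topology. The key input is the following regularity property of the colimit $\Oscr(M,X)=\mathrm{colim}_k \Oscr(U_k,X)$: any sequence convergent in $\Oscr(M,X)$ is eventually contained in a single $A_\ell$ and converges there in the topology of $\Oscr(U_\ell,X)$. Applying this to $(f(p_{k_n}))$ produces such an $\ell$, and then for $n$ large enough that $k_n\ge\ell$ one has $U_{k_n}\subset U_\ell$, so $A_\ell\subset A_{k_n}$ and therefore $f(p_{k_n})\in A_{k_n}$, contradicting the choice of $p_{k_n}$.

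With $k$ as above, define $g:=j_k^{-1}\circ f\colon K\to\Oscr(U_k,X)$. First countability of $K$ reduces continuity of $g$ to sequential continuity. Given $p_n\to p$ in $K$, continuity of $f$ gives $f(p_n)\to f(p)$ in $\Oscr(M,X)$ with all terms lying in $A_k$. The complementary topological input I would use is that $j_k$ is a topological embedding, i.e.\ the subspace topology on $A_k$ from $\Oscr(M,X)$ coincides with the $\Oscr(U_k,X)$-topology transported by $j_k$; this forces $g(p_n)\to g(p)$ in $\Oscr(U_k,X)$, completing the argument.

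The main obstacle is justifying the two topological properties of the colimit: regularity of convergent sequences, and the embedding character of each $j_k$. For scalar-valued holomorphic functions these are standard features of a Silva ((LS)-)space, stemming from the compactness of the restriction operators $\Oscr(U_k)\to\Oscr(U_{k+1})$, which is Montel's theorem applied to $U_{k+1}\Subset U_k$. For the manifold-valued space $\Oscr(U,X)$ I would reduce to this linear setting by working in coordinate charts on $X$ and locally parameterising $\Oscr(U_k,X)$ around a given map $h_0$ by the Fr\'echet space of holomorphic sections of $h_0^*TX$ via an exponential-type map, after which the Silva-space arguments apply in a neighbourhood of every point in $\Oscr(M,X)$.
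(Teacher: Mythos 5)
The set-theoretic factorisation you give is correct and is in fact essentially the first paragraph of the paper's proof of Proposition \ref{p:compact}. The gap is in the continuity step: your argument rests on the claim that $j_k$ is a topological embedding, i.e.\ that the subspace topology on $A_k$ inherited from $\Oscr(M,X)$ agrees with the topology of $\Oscr(U_k,X)$ transported by $j_k$. This is false in general for the colimit topology, and the paper points this out explicitly: the restriction maps $E_k\to E_{k+1}$ (hence the maps $j_k$) are continuous but in general \emph{not} inclusions of subspaces, since they often have dense, non-closed image. The colimit topology is defined to be the finest one making all $j_k$ continuous; it gives no lower bound on how coarse the topology restricted to $A_k$ may be. Concretely, if $f(p_n)\to f(p)$ in $\Oscr(M,X)$ with all terms in $A_k$, the regularity of the Silva space only yields some $\ell$ (possibly with $\ell>k$) such that the sequence converges in $E_\ell$; since $U_\ell\subset U_k$, convergence in $E_\ell$ is a strictly weaker condition than convergence in $E_k$, so this does not give $g(p_n)\to g(p)$ in $E_k$.

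What is actually needed, and what the paper proves separately as Proposition \ref{p:compact}, is that the sequentially compact set $f(K)$ is not merely contained in some $E_k$ but is \emph{compact} in $E_k$ for $k$ large. That compactness is then what powers the continuity proof: any subsequence of $(g(a_\nu))$ has, by compactness in $E_k$, a further subsequence converging in $E_k$ to some $b$; this subsequence also converges to $b$ in $E$, and by continuity of $f$ and Hausdorffness of $E$ one concludes $b=f(a)$, whence the whole sequence $g(a_\nu)\to g(a)$ in $E_k$. Establishing compactness in $E_k$ requires more than the containment you proved: one uses the vollstetig (Montel) property of the inclusions $E_k\hookrightarrow E_{k+1}$ together with second countability to build an exhaustion of each $E_k$ by open sets relatively compact in $E_{k+1}$, shows that $L=f(K)$ is closed in each $E_k$, and runs a diagonal-type contradiction argument. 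Your proof omits this step entirely, and the embedding claim you substitute for it does not hold.

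Your final remark on reducing manifold-valued maps to the Fr\'echet-space setting via coordinates and the tangent bundle is in the right spirit; the paper does this via the Docquier--Grauert--Siu tubular neighbourhood theorem and triviality of holomorphic vector bundles on open Riemann surfaces. But that reduction only makes the Silva-space machinery available; it does not repair the embedding claim, which fails even in the scalar case.
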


Taking $K=[0,1]$, it follows in particular that every continuous path in $\Oscr(M, X)$, that is, 
every homotopy of closed holomorphic curves in $X$, is the image of a continuous path in 
$\Oscr(U_k, X)$ for some $k\geq 1$.  The analogous result for $\Lscr(M,X)$ is immediate.

The first main result of this section is the following basic h-principle for holomorphic Legendrian curves 
in $X=\P(T^*Z)$.  Note that $Z$ need not be an Oka manifold.

%
%
\begin{theorem} \label{th:basic-h-principle}
Let $X$ be the projectivised cotangent bundle of a complex manifold $Z$ of dimension at least 2.  
Let $M$ be a compact bordered Riemann surface.  Then, the inclusion
\[ 
	\Lscr(M,X) \hookrightarrow \Oscr(M, X) 
\]
induces a surjection of path components.  In other words, every closed holomorphic curve in $X$ can be deformed to a closed holomorphic Legendrian curve.  The analogous results hold for open curves 
and bordered curves.
\end{theorem}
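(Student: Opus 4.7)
The plan is to split the problem into two cases according to whether the base projection $g = \pi \circ f : M \to Z$ is constant. If $g$ is constant, then $f$ takes values in a single fiber of $\pi$, so $f$ is already vertical and therefore automatically Legendrian by the remarks in Section \ref{sec:intro}; there is nothing to do. The content of the theorem is thus in the case when $g$ is nonconstant.

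Assuming $g$ nonconstant, first note that $f$ is defined on some connected open neighborhood $V$ of $M$ in $\wt M$, and by the identity principle $g|_V$ is also nonconstant. Since $V$ is an open Riemann surface, hence Stein, Proposition \ref{prop:summary}(d) yields a holomorphic Legendrian lift $\tilde f : V \to X$ of $g|_V$. The next step is to connect $f|_V$ and $\tilde f$ by a path in $\Oscr(V, X)$; restricting this path to $M$ will then produce the desired homotopy in $\Oscr(M, X)$ from $f$ to $\tilde f|_M \in \Lscr(M, X)$. Both $f|_V$ and $\tilde f$ are holomorphic sections of the pulled-back holomorphic fiber bundle $g^* X \to V$, whose typical fiber is $\CP^n$. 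Since $V$ is Stein and $\CP^n$ is Oka, standard Oka theory for sections of holomorphic fiber bundles with Oka fibers (see, e.g., \cite[Chapter 5]{Forstneric2017E}) tells us that the inclusion of holomorphic sections into continuous sections of $g^*X\to V$ is a weak homotopy equivalence. The base $V$ has the homotopy type of a one-dimensional CW complex and the fiber $\CP^n$ is simply connected, so by elementary obstruction theory (the only potentially nonzero obstruction to homotoping two continuous sections lies in $H^2(V; \pi_2(\CP^n)) = 0$) any two continuous sections are homotopic through sections; consequently any two holomorphic sections are connected by a continuous path of holomorphic sections, producing the required path from $f|_V$ to $\tilde f$.

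The open-curve variant is identical, with $M$ itself (an open Riemann surface) playing the role of $V$. The bordered-curve variant for maps of class $\Ascr^r(M, X)$ follows by the same argument, applied after a Mergelyan-type extension or approximation of $f$ on a one-sided neighborhood of $M$, together with a final approximation back in the $\Ascr^r$ topology. The main obstacle is really the setup rather than any single estimate: one must identify the correct fiber bundle $g^* X \to V$, check that the Oka principle applies there, and verify that simple connectivity of the fiber $\CP^n$ combined with the one-dimensional homotopy type of $V$ kills all the obstructions. Once this framework is in hand, the proof compresses to the slogan \emph{take any Legendrian lift of $g$ from Proposition \ref{prop:summary}(d), then slide $f$ onto it within the fiber bundle of holomorphic lifts of $g$.}
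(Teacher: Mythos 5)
Your proposal is correct and takes essentially the same route as the paper: lift the nonconstant base projection $g=\pi\circ f$ to a Legendrian curve via Proposition \ref{prop:summary}(d), interpret $f$ and the lift as sections of a $\CP^n$-bundle over a Stein neighbourhood of $M$, and use the Oka principle together with the one-dimensional homotopy type of the base and the simple connectivity of the fibre to connect them by a path of holomorphic sections. The only cosmetic difference is that the paper first trivialises the pullback bundle $g^*(T^*Z)$ by Oka--Grauert and then applies the Oka principle for maps into $\CP^n$, whereas you apply the Oka principle directly to sections of the bundle; these are interchangeable.
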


\begin{remark}  \label{rem:h-principle}
This result is rightly called a basic h-principle.  Indeed, a formally Legendrian holomorphic map $M\to X$ is a 
holomorphic map $f:M\to X$ with a morphism $TM\to\xi$ over it, or in other words, a morphism $TM\to f^*\xi$ over $M$. 
Since $\xi$ is a vector subbundle of $TX$, such a morphism can be scaled down to the zero morphism.  
Therefore the projection $\Lscr_\mathrm{formal}(M,X)\to\Oscr(M,X)$ is a homotopy equivalence.  
This is a general fact for maps directed by a vector bundle.
\qed\end{remark}

\begin{proof}
Let $f\in\Oscr(M,X)$.  We need to show that $f$ can be deformed to a Legendrian curve, so we may 
assume that $f$ is not vertical.  Lift the nonconstant curve $g=\pi\circ f\in\Oscr(M,Z)$ to a horizontal 
Legendrian curve $h\in\Lscr(M,X)$ (see Proposition \ref{prop:summary}(d)).  
Now compare $f$ and $h$.  View them as holomorphic sections of the projectivisation of the pullback 
bundle $g^*(T^*Z)$ on $M$ (see Sect.\ \ref{sec:basic}).  
The pullback bundle is trivial by the Oka-Grauert principle, so we can view $f$ and $h$ as holomorphic
maps to $\C\P^n$ from a connected open neighbourhood $U$ of $M$ 
in $\widetilde M$.  Since $U$ is Stein and $\C\P^n$ is Oka, the inclusion 
$\Oscr(U,\C\P^n)\hookrightarrow \Cscr(U, \C\P^n)$ is a weak equivalence.  Also, $U$ has the 
homotopy type of a bouquet of circles and $\C\P^n$ is simply connected, so $\Cscr(U, \C\P^n)$ 
is path connected.  Hence, $\Oscr(U,\C\P^n)$ is path connected, so the maps $f$ and $h$ can be 
joined by a path in $\Oscr(M,X)$ as required.

For an open Riemann surface $M$, and with $X$ as above, the same argument shows that every 
holomorphic map $M\to X$ can be deformed to a holomorphic Legendrian map.  
(Note that the map $g=\pi\circ f\colon M\to Z$ does not change in the course of
the proof, so we need not assume that $Z$ is an Oka manifold. What is important 
is that the fibres of the projection $X\to Z$ are Oka.  Indeed, our argument is essentially an Oka principle for liftings.)

For a compact bordered Riemann surface $M$, using the Oka principle in 
\cite[Theorem 6.1]{DrinovecForstneric2008FM}  (see also \cite[Theorem 8.12.1]{Forstneric2017E}), 
a similar argument shows that every bordered holomorphic curve can be deformed to a 
bordered holomorphic Legendrian curve.  
\end{proof}

There are technical and possibly also conceptual obstacles  to proving a 1-parametric h-principle
that we are, so far, unable to overcome. One of the principal difficulties is 
the discontinuity of Legendrian liftings of branched curves, described in 
Proposition \ref{prop:singularity}. The situation is substantially simpler if we restrict ourselves 
to holomorphic curves $f:M\to X$ that are \emph{strong immersions}, 
meaning that the projection $\pi\circ f:M\to Z$ is an immersion (so $f$ itself is also an immersion).  
We denote by 
\[
	\Oscr_\mathrm{si}(M,X)
\]
the subspace of $\Oscr(M,X)$ consisting of strong immersions and let 
\[
	\Lscr_\mathrm{si}(M,X)=\Lscr(M,X)\cap\Oscr_\mathrm{si}(M,X).
\]
The following basic and 1-parametric h-principle is the second main result of this section.

%
%
\begin{theorem} \label{th:better-h-principle}
Let $X$ be the projectivised cotangent bundle of a complex manifold $Z$ of dimension at least 2.  
Let $M$ be a compact bordered Riemann surface.  Then the inclusion
\begin{equation} \label{eq:inclusion}
	\Lscr_\mathrm{si}(M,X) \hookrightarrow \Oscr_\mathrm{si}(M, X) 
\end{equation}
induces a bijection of path components.  If $\dim Z\geq 3$, then the inclusion also induces an epimorphism of fundamental groups, but this fails in general if $\dim Z=2$.
\end{theorem}

\begin{remark}\label{rem:strongimmersions}
Assuming that $\dim Z\ge 2$, it follows from the jet transversality theorem that for a generically chosen 
path of holomorphic maps $h_t:M\to X$ $(t\in[0,1])$, the path of projections $\pi\circ h_t:M\to Z$
consists of immersions. This implies that the inclusion 
$\Oscr_\mathrm{si}(M, X) \hookrightarrow \Oscr(M, X)$ satisfies the basic and the 1-parametric 
h-principle, and hence the conclusion of Theorem \ref{th:better-h-principle} also holds
for the inclusion $\Lscr_\mathrm{si}(M,X) \hookrightarrow \Oscr(M, X)$.
\qed\end{remark}

\begin{proof}
Surjectivity at the level of path components is proved by the same argument as in the proof of Theorem \ref{th:basic-h-principle}.  For the 1-parametric h-principle, let $\gamma\colon [0,1]\to \Oscr_\mathrm{si}(M,X)$ be a continuous path joining curves $\gamma(0), \gamma(1)\in \Lscr_\mathrm{si}(M,X)$.  When $\dim Z=2$, the path $\pi_*\circ\gamma$ in 
$\Oscr(M,Z)$ consists of immersions, so it has a unique lifting to a continuous path $\beta$ in $\Lscr_\mathrm{si}(M,X)$ joining $\gamma(0)$ and $\gamma(1)$.  This proves injectivity at the level of path components (but as shown at the end of the proof, $\beta$ need not be homotopic to $\gamma$).  Now let $\dim Z\geq 3$.  We need to deform $\gamma$, with fixed end points, to a path in $\Lscr_\mathrm{si}(M,X)$.

Recall that a Legendrian lifting $h:M\to X$ of a nonconstant holomorphic map $g:M\to Z$ looks like this in local coordinates $z_0,\ldots,z_n$ on $Z$ and the associated fibre coordinates $\zeta_0,\ldots,\zeta_n$ on $T^*Z$, given by the local frame $dz_0,\ldots,dz_n$ (see \eqref{eq:lifting}): 
\[ h=(g_0,\ldots,g_n,[\zeta_0:\cdots:\zeta_n]), \quad
	\textrm{where}\quad \zeta_0 dg_0+\cdots+\zeta_n dg_n=0. \]
Thus $h$ may be identified with a section on $M$ of the projectivisation of a vector subbundle of corank 1 of the pullback bundle $g^*(T^*Z)$ (whether or not $g$ is an immersion).  

By Proposition \ref{p:map-from-compact}, we may view the path $\pi_*\circ\gamma$ as a continuous map $U\times [0,1]\to Z$, holomorphic and immersive along $U$, where $U$ is a connected open neighbourhood of $M$ in $\widetilde M$.  We pull back $T^*Z$ by $\pi_*\circ\gamma$ and obtain a topological vector bundle $E$ of rank $n+1$ over $U\times[0,1]$ with a continuously varying holomorphic structure on the restriction $E_t$ over $U\times\{t\}$ for each $t\in [0,1]$.  By the Oka-Grauert principle, each bundle $E_t$ is holomorphically trivial.  After shrinking $U$, by a theorem of Leiterer \cite[Theorem 2.12]{Leiterer1990}, as a topological vector bundle, holomorphic along $U$, $E$ is isomorphic to the trivial bundle.  In other words, there is a continuous path of holomorphic isomorphisms $E_0\to E_t$, $t\in [0,1]$.

Now consider the subbundle $F$ of $E$ of rank $n$, locally defined by an equation of the form $\zeta_0 dg_0+\cdots+\zeta_n dg_n=0$ as above.  It is, similarly, trivial.  We would like to trivialise the pair $F\subset E$ together.  Considering $E$ as a constant trivial bundle on $U$, we may think of $F$ as a continuous path of holomorphic subbundles of corank 1, defined by a path in $\Oscr(U, \C\P^n)$.  To trivialise $F$ and $E$ together, we need to lift this path by the projection $\mathrm{GL}(n+1,\C)\to\C_*^{n+1}\to \C\P^n$ to, say, the first row.  The projection is the composition of holomorphic fibre bundles with path connected Oka fibres, so it is an Oka map (we do not need to worry about whether it is locally trivial).  Hence, noting that $U$ is homotopy equivalent to a bouquet of circles, a lifting into $\Cscr(U, \mathrm{GL}(n+1,\C))$ exists and can be deformed to a lifting into $\Oscr(U, \mathrm{GL}(n+1,\C))$, as desired.

Our problem may now be reformulated as follows.  We have a path in $\Oscr(U,\C\P^n)$ with end points in $\Oscr(U,\C\P^{n-1})$.  We wish to deform it, with fixed end points, to a path in $\Oscr(U,\C\P^{n-1})$.  For this we need the inclusion $\Oscr(U,\C\P^{n-1}) \hookrightarrow \Oscr(U,\C\P^n)$ to be a 1-equivalence.  Now $\C\P^n$ is $\C\P^{n-1}$ with a $2n$-cell glued to it.  Hence the inclusion $\C\P^{n-1}\hookrightarrow \C\P^n$ is a $(2n-1)$-equivalence; here, $2n-1\geq 3$.  Therefore the inclusion $\Cscr(U,\C\P^{n-1}) \hookrightarrow \Cscr(U,\C\P^n)$ is a 2-equivalence.  By the Oka-Grauert principle, so is the inclusion $\Oscr(U,\C\P^{n-1}) \hookrightarrow \Oscr(U,\C\P^n)$.

Finally, we give an example showing that the inclusion \eqref{eq:inclusion} does not 
induce an epimorphism of fundamental groups in general if $\dim Z=2$.

Let $Z=\C^2$, so $X = \C^2\times \C\P^1$, and consider the following commuting diagram, in which the vertical maps are homeomorphisms, the second row is a retraction diagram, and $\mathscr I(M,Z)$ is the space of holomorphic immersions $M\to Z$.
\[ 
	\xymatrix@C+=1.5cm{
	\Lscr_\mathrm{si}(M,X) \ar@{^{(}->}[r] \ar[d]_{\pi_*} & \Oscr_\mathrm{si}(M,X) \ar[d] & \\
	\mathscr I(M,Z) \ar[r]^{\hspace{-35pt} f\mapsto (f,[f'])} & \mathscr I(M,Z)\times\Oscr(M,\C\P^1) 
	\ar[r]^{\hspace{27pt} \rm proj} & \mathscr I(M,Z)
	} 
\]
We see that the inclusion $\Lscr_\mathrm{si}(M,X) \hookrightarrow \Oscr_\mathrm{si}(M, X)$ induces an epimorphism of fundamental groups if and only if the space $\Oscr(M,\C\P^1)$ (which is path connected) is simply connected.  We claim that it is not if $M$ is an annulus; in fact, invoking Proposition \ref{p:map-from-compact}, we claim that for every open annulus $U_1$ containing $M$, there is a loop in $\Oscr(U_1,\C\P^1)$ that cannot be deformed to a constant loop in $\Oscr(U_2,\C\P^1)$ for any smaller open annulus $U_2$ containing $M$.  This is true because the inclusion $\Oscr(U,\C\P^1) \hookrightarrow \Cscr(U,\C\P^1)$ is a weak homotopy equivalence for every open annulus $U$ by the Oka-Grauert principle, and the space $\Cscr(S^1,S^2)$ is not simply connected since there are continuous maps $S^1\times S^1\to S^2$ that are not nullhomotopic.
\end{proof}

We conclude this section by proving Proposition \ref{p:map-from-compact}.  In what follows, 
$X$ can be an arbitrary complex manifold of positive dimension.

We build on the results in \cite[Sect.~I.7]{GrauertRemmertAS}.  We need to take them a little further.  
With the notation established at the beginning of the section, write $E_k= \Oscr(U_k, X)$ and 
$E= \Oscr(M, X)$.  The inclusions $E_k\hookrightarrow E_{k+1}$ are the restriction maps.  
They are continuous, but in general not open, that is, not inclusions of subspaces.  
Often they will have a dense image.  Each inclusion is \lq\lq vollstetig\rq\rq.\footnote{A map $\phi:S\to T$ 
of topological spaces is vollstetig if for every open subset $V$ of $T$, every point in $\phi^{-1}(V)$ has a 
neighbourhood $U\subset \phi^{-1}(V)$ such that $\phi(U)$ is relatively compact in $V$ 
\cite[p.~22]{GrauertRemmertAS}.  A vollstetig map is continuous and the composition of maps 
that are vollstetig is vollstetig.}  Indeed, by Montel's theorem, each point in $E_k$ has a basis of 
neighbourhoods that are relatively compact in $E_{k+1}$.  (The proof of this observation requires the Docquier-Grauert-Siu tubular neighbourhood theorem 
and triviality of holomorphic vector bundles over an open Riemann surface, so that 
Montel's theorem can be applied; see \cite[Theorems 3.3.3 and 5.3.1]{Forstneric2017E}.)  
The spaces $E_k$ are second countable and Hausdorff (in fact, metrisable).  
Hence, $E$ is what is called a {\em Silva space} in \cite{GrauertRemmertAS}.  In particular, $E$ is 
Hausdorff.  Also, a sequence $(x_\nu)$ converges to a limit $x$ in $E$ if and only if there is $k$ 
such that $E_k$ contains the sequence and the limit and $x_\nu\to x$ in $E_k$.  For all this, 
see \cite[Sect. I.7]{GrauertRemmertAS}.  

\begin{proposition}  \label{prop:1countable}
The space $E= \Oscr(M, X)$ is not first countable and hence not metrisable.
\end{proposition}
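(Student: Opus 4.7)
The plan is to argue by contradiction using the sequential characterisation of convergence in $E$ recalled just above: any convergent sequence, together with its limit, lies in some single $E_k$.

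Suppose $E$ were first countable at some $f_0\in E$, and choose a decreasing countable neighbourhood basis $V_1\supset V_2\supset\cdots$ of $f_0$. I would select $f_n\in V_n\setminus E_n$ for each $n$. Since $E_k\subset E_n$ whenever $k\leq n$, this forces $f_n\notin E_k$ for every $n\geq k$, while the basis property gives $f_n\to f_0$ in $E$; in particular the whole sequence cannot be contained in any single $E_k$ (because $f_k\notin E_k$), contradicting the sequential characterisation.

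To produce such $f_n$ it suffices, for any fixed $m>n$, to show that $E_n$ has empty interior at $f_0$ in $E_m$, since by the colimit topology $V_n\cap E_m$ is an open neighbourhood of $f_0$ in $E_m$ and so would then fail to be contained in $E_n$. I would linearise via a holomorphic chart $\phi\colon V\to\phi(V)\subset\C^d$, $d=\dim X\geq 1$, on $X$ around $f_0(x_0)$ for some $x_0\in M$. After replacing $f_0$ if necessary by a translate with image contained in $V$, a sufficiently small neighbourhood of $f_0$ in $E_{m+1}$ consists of maps $f$ with $f(\overline{U_{m+1}})\subset V$, and postcomposition with $\phi$ identifies this neighbourhood homeomorphically with an open subset of the Fr\'echet space $\Oscr(U_{m+1},\C^d)$, carrying $E_n\cap E_{m+1}$ to the affine translate through $\phi\circ f_0|_{U_{m+1}}$ of the linear subspace of maps extending from $U_{m+1}$ to $U_n$.

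This subspace is \emph{proper}: since $\overline{U_{m+1}}\Subset U_n$ and $\widetilde M$ is noncompact and connected, the set $U_n\setminus\overline{U_{m+1}}$ is nonempty, and for any $z_0$ in it, Mittag-Leffler on the Stein surface $U_n$ produces a meromorphic function on $U_n$ with a simple pole at $z_0$ and no other poles, whose restriction to $U_{m+1}$ lies in $\Oscr(U_{m+1},\C)\setminus\Oscr(U_n,\C)$. A proper linear subspace of a topological vector space has empty interior, since continuity of scaling would otherwise exhaust the ambient space; transporting back through the chart homeomorphism, $E_n$ has empty interior at $f_0$ in $E_m$, as required.

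The main obstacle is the chart reduction in the third paragraph: one must shrink neighbourhoods carefully so that nearby maps remain trapped inside the chart, and verify that the resulting local homeomorphism truly intertwines the two colimit topologies. Once that is in place, the linear-algebraic core -- Mittag-Leffler producing a function that does not extend, combined with the fact that proper linear subspaces of topological vector spaces have empty interior -- closes the proof cleanly.
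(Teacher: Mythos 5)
Your high-level strategy is sound and genuinely different from the paper's: rather than constructing an explicit diagonal sequence out of inner functions, you use first countability directly to extract $f_n\in V_n\setminus E_n$ and then invoke the sequential characterisation. The reduction to \lq\lq $E_n$ is not a neighbourhood of $f_0$ in $E_m$\rq\rq\ is also correct, and that statement is in fact true (one can prove it from the \emph{vollstetig} observation in the paper: the image of $E_n$ in $E_m$ is $\sigma$-compact, compact subsets of $E_m$ have empty interior by a Montel/Marty argument, and $E_m$ is a Baire space). However, your proof of this key step has two genuine gaps.

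First, the chart identification fails. An element of $E_{m+1}=\Oscr(U_{m+1},X)$ is only defined on the open set $U_{m+1}$, so \lq\lq $f(\overline{U_{m+1}})\subset V$\rq\rq\ is not meaningful; more importantly, the compact-open topology on $\Oscr(U_{m+1},X)$ only constrains maps on compacts $K\Subset U_{m+1}$, so there is \emph{no} neighbourhood of $f_0$ in $E_{m+1}$ (or in $E_m$) consisting of maps with image inside a fixed chart $V$. Consequently, postcomposition with $\phi$ does not identify a neighbourhood of $f_0$ with an open subset of the Fr\'echet space $\Oscr(U_{m+1},\C^d)$; you only get a continuous injection with non-open image, which is not enough to transport the \lq\lq proper affine subspace has empty interior\rq\rq\ conclusion back. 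Second, and independently, even where the chart comparison is available the Mittag-Leffler function $s$ does not obstruct extension of the $X$-valued map: if $g$ extends to $\tilde g\colon U_n\to X$, the extension is free to leave the chart $V$, so $\phi\circ g$ need not extend. Concretely, take $X=\C\P^1$, $f_0\equiv 0$, $\phi$ the affine chart and $s$ with a simple pole at $z_0\in U_n$; then $g=\epsilon s\colon U_{m+1}\to\C\subset\C\P^1$ extends holomorphically across $z_0$ to a map $U_n\to\C\P^1$ (a meromorphic function), so $g\in E_n$ after all. This shows the simple-pole construction cannot, by itself, produce elements of $V_n\setminus E_n$ for general $X$, and a different argument (Baire category as above, or the paper's inner-function sequence) is needed to close the gap.
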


\begin{proof}
Consider first the special case when $M$ is the closed unit disc in $\C$ and $X=\C$.
Create a sequence $f_1^1, f_2^1, f_1^2, f_3^1, f_2^2, f_1^3, f_4^1,\ldots$ 
in $E=\Oscr(M,\C)$ by merging the sequences $(f_j^k)_{j\geq 1}$, where $f_j^k(z)=\big[(1+1/k)^{-1}z\big]^j$, 
for $k\geq 1$.  The $k^\textrm{th}$ sequence converges to $0$ locally uniformly on the open disc of radius 
$1+1/k$ (and on no bigger disc), so it converges to $0$ in $E$. If $E$ were first countable, we could extract 
a subsequence from the merged sequence, going to $0$ and having a subsequence in common with each of 
the sequences $(f_j^k)_{j\geq 1}$, $k\geq 1$.  This would contradict the fact, cited above, that a sequence 
$(x_\nu)$ converges to a limit $x$ in $E$ if and only if there is $k$ such that $E_k$ contains the sequence 
and the limit and $x_\nu\to x$ in $E_k$.

Consider now the general case. Let $\psi$ be a 
holomorphic (non-proper) embedding of the open disc of radius $2$ in $\C$ into $X$.
Take the neighbourhoods $U_k$ of $M$ in $ \wt M$
to be smoothly bounded domains with real analytic boundaries. 
Then $U_k$ admits an inner function $\phi_k$, that is, a proper holomorphic map of $U_k$ onto the unit disc
which extends holomorphically to some neighbourhood of the closure $\overline U_k$
(see e.g.~\cite{Ahlfors1950}).
Now proceed as before, taking $f_j^k(z)=\psi\circ \phi_k(z)^j\in X$ for $j, k\geq 1$. 
For each $k$, as $j\to\infty$, $f_j^k$ converges to the constant map $z\mapsto \psi(0)\in X$ on $U_k$, but diverges on any bigger domain. The proof is then completed just as in the special case.
\end{proof}

\begin{proposition}  \label{p:compact}
Let $L$ be a sequentially compact subset of $E$.  Then, for $k$ sufficiently large, $L$ is a subset of $E_k$ 
and is compact in $E_k$.
\end{proposition}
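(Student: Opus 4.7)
My plan is to argue in two steps: first establish that $L \subset E_k$ for some $k$, then upgrade this to compactness of $L$ in $E_k$ (possibly after replacing $k$ by $k+1$).

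For the containment step I argue by contradiction. If $L \not\subset E_k$ for every $k$, pick $x_k \in L \setminus E_k$ for each $k \geq 1$. By sequential compactness of $L$, some subsequence $(x_{k_j})$ with $k_j$ strictly increasing converges in $E$ to a point $x \in L$. The characterisation of sequential convergence in $E$ recalled before the proposition furnishes an integer $m$ such that the tail of the subsequence and the limit $x$ all lie in $E_m$, with convergence taking place in $E_m$. Since $U_{k_j} \subset U_m$ whenever $k_j \geq m$, restriction gives an inclusion $E_m \hookrightarrow E_{k_j}$, and therefore $x_{k_j} \in E_{k_j}$ for all sufficiently large $j$, contradicting the choice $x_{k_j} \notin E_{k_j}$.

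For the compactness step, fix $k$ with $L \subset E_k$ and work in $E_{k+1}$. Since $E_{k+1}$ is metrisable it suffices to show $L$ is closed and sequentially compact there. Closedness of $L$ in $E_{k+1}$ follows from $L$ being sequentially closed in the Hausdorff space $E$, combined with continuity of the inclusion $E_{k+1} \hookrightarrow E$. For sequential compactness in $E_{k+1}$, take $(x_n) \subset L$; sequential compactness in $E$ and the Silva characterisation give a subsequence $(x_{n_j})$ converging to some $x \in L \subset E_k$ in $E_m$ for some $m$. If $m \leq k+1$, the continuous restriction $E_m \to E_{k+1}$ yields convergence in $E_{k+1}$, and we are done.

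The delicate case is $m > k+1$, where convergence sits in the coarser $E_m$-topology while we need convergence in the finer $E_{k+1}$-topology. To handle it I would invoke the vollstetig property of $E_k \hookrightarrow E_{k+1}$ established in the excerpt via Montel's theorem, after locally linearising maps into $X$ by the Docquier--Grauert--Siu tubular neighbourhood theorem and triviality of holomorphic vector bundles on the open Riemann surface $U_k$: a neighbourhood basis of $x$ in $E_k$ consists of sets relatively compact in $E_{k+1}$. Combining this with $E_m$-convergence of $(x_{n_j}) \subset E_k$ to $x$ and the standard Silva space extraction argument of Grauert and Remmert yields a further subsequence converging in $E_{k+1}$, whose limit is $x$ by Hausdorffness. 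The principal obstacle is precisely this passage from coarse to fine topology, which is not formal from sequential compactness alone but is exactly the content of the vollstetig/Montel property.
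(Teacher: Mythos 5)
Your containment step is correct and coincides with the paper's argument. The closedness observation is also sound. The gap is in the compactness step, and it is a genuine one: the plan of fixing $k$ with $L\subset E_k$ and then proving that $L$ is compact in $E_{k+1}$ cannot work, because that statement is false. Here is a counterexample of the kind already suggested by Proposition \ref{prop:1countable}: take $M$ the closed unit disc, $X=\C$, let $U_j$ be the open disc of radius $r_j\downarrow 1$, choose $r$ with $r_{k+2}<r<r_{k+1}$, and set $f_j(z)=(z/r)^j$. Then each $f_j$ is a polynomial, so lies in $E_1\subset E_k$; $f_j\to 0$ in $E_{k+2}$ (hence in $E$), so $L=\{f_j:j\geq 1\}\cup\{0\}$ is sequentially compact in $E$ and $L\subset E_k$. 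But $\sup_{|z|\leq r'}|f_j|=(r'/r)^j\to\infty$ for any $r'\in(r,r_{k+1})$, so no subsequence of $(f_j)$ is locally bounded on $U_{k+1}$ and therefore no subsequence converges in $E_{k+1}$. Thus $L$ is not compact in $E_{k+1}$. The index for which compactness holds can be arbitrarily far beyond the first $k$ with $L\subset E_k$, and finding it is exactly the content of the proposition.

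The vollstetig property does not rescue the argument. It provides, around each $x\in E_k$, a basis of $E_k$-neighbourhoods $U$ that are relatively compact in $E_{k+1}$. But your subsequence $(x_{n_j})$ converges to $x$ only in the coarser $E_m$-topology ($m>k+1$), and an $E_k$-open set $U$ need not be $E_m$-open, so you cannot conclude that the sequence eventually enters $U$. Without that, no Montel-type extraction in $E_{k+1}$ is available, and, as the example shows, none can be. What the paper does instead is globalise the vollstetig property into an exhaustion $E_k=\bigcup_j V_j^k$ by open sets relatively compact in $E_{k+1}$, compatible across $k$, and then run a diagonal argument: if $L$ is not compact in any $E_k$, pick $y_k\in L\setminus V_k^k$, extract a subsequence converging in some $E_m$ to $c\in L$, place $c$ in some $V_\ell^m$, and derive a contradiction from the monotonicity $V_\ell^m\subset V_k^k$ for $k\geq\ell$. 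This is the mechanism that produces a single sufficiently large index uniformly over all sequences in $L$, and it has no local substitute.
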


This is proved in \cite{GrauertRemmertAS} for Silva vector spaces only and with compactness in place 
of sequential compactness \cite[Satz I.8.5]{GrauertRemmertAS}.  
The following proof relies on the spaces $E_k$ being second countable.  Since each $E_k$ is metrisable, 
a subset of $E_k$ is compact if and only if it is sequentially compact.

\begin{proof}
First suppose that $L\not\subset E_k$ for all $k$.  Choose $x_k\in L\setminus E_k$ for each $k$.  
The sequence $(x_k)$ in $L$ has a subsequence converging in $E$ (to a limit in $L$), so the subsequence 
must be contained in $E_m$ for some $m$, which is absurd.  For convenience, assume that $L\subset E_1$.

Since the spaces $E_k$ are second countable, each $E_k$ is the increasing union of open subsets 
$V_1^k\subset V_2^k\subset\cdots$ such that $V_j^k$ is relatively compact in $E_{k+1}$ and contained 
in $V_j^{k+1}$ for all $j,k\geq 1$.

Note that $L$ is closed in $E_k$ for each $k$.  Indeed, suppose that $(x_\nu)$ is a sequence in $L$ converging
to a limit $a$ in $E_k$.  Then $x_\nu\to a$ in $E$.  Since $L$ is sequentially compact in $E$, $(x_\nu)$ has a 
subsequence converging in $E$ to a limit $b$ in $L$.  Now $E$ is Hausdorff, so $a=b\in L$.

Suppose that $L$ is not compact in $E_k$ for any $k$.  Then, for each $k$, $L\not\subset V_k^k$, so there 
is $y_k\in L\setminus V_k^k$.  The sequence $(y_k)$ in $L$ has a subsequence -- say itself -- that converges 
in $E$ to a limit $c$ in $L$, so the sequence and the limit $c$ must be contained in $E_m$ for some $m$ 
with $y_k\to c$ in $E_m$.  Then there is $\ell\geq m$ such that $V_\ell^m$ contains $c$ and hence $y_k$ 
for all sufficiently large $k$.  But for such $k$ with $k\geq \ell$, $V_k^k$ contains $y_k$, which is absurd.
\end{proof}

\begin{proof}[Proof of Proposition \ref{p:map-from-compact}]
Let $K$ be a sequentially compact first countable space and $f:K\to E$ be continuous.  
Then the image $f(K)$ is sequentially compact in $E$.  By Proposition \ref{p:compact}, 
there is $k$ with $f(K)$ compact in $E_k$.  Write $g$ for the map $K\to E_k$ induced by $f$.  We need to 
show that $g$ is continuous.  Let $a_\nu\to a$ in $K$.  Every subsequence of the sequence $(g(a_\nu))$ in 
the compact subset $g(K)=f(K)$ of $E_k$ has a subsequence converging in $E_k$ to a limit $b$.  The latter 
subsequence also converges to $b$ in $E$.  Since $g(a_\nu)=f(a_\nu)\to f(a)$ in $E$, and $E$ is Hausdorff, 
$b=f(a)$.  Hence $g(a_\nu)\to f(a)=g(a)$ in $E_k$.
\end{proof}


\subsection*{Acknowledgements}
F.\ Forstneri\v c is supported by the research program P1-0291 and grants J1-7256 
and J1-9104 from ARRS, Republic of Slovenia.  
F.~L\'arusson is supported by Australian Research Council grant DP150103442.  
The work on this paper was begun while Forstneri\v c visited the University of Adelaide in May 2017, 
and was finished when L\'arusson visited the University of Ljubljana in September 2018.
The authors thank the respective institutions for the invitation, hospitality, and partial support.



\begin{thebibliography}{10}

\bibitem{Ahlfors1950}
L.~V. Ahlfors.
\newblock Open {R}iemann surfaces and extremal problems on compact subregions.
\newblock {\em Comment. Math. Helv.}, 24:100--134, 1950.

\bibitem{AlarconForstneric2014IM}
A.~Alarc{\'o}n and F.~Forstneri{\v{c}}.
\newblock Null curves and directed immersions of open {R}iemann surfaces.
\newblock {\em Invent. Math.}, 196(3):733--771, 2014.

\bibitem{AlarconForstneric2017IMRN}
A.~Alarc{\'o}n and F.~Forstneri{\v{c}}.
\newblock Darboux charts around holomorphic legendrian curves and applications.
\newblock {\em Internat. Math. Res. Not.}, 153(9):1945--1986, 2017.

\bibitem{AlarconForstnericLopez2017CM}
A.~Alarc{\'o}n, F.~Forstneri{\v c}, and F.~J. L{\'o}pez.
\newblock Holomorphic {L}egendrian curves.
\newblock {\em Compos. Math.}, 153(9):1945--1986, 2017.

\bibitem{AlarconForstneric2019JAMS}
A.~Alarc\'{o}n and F.~Forstneri\v{c}.
\newblock New complex analytic methods in the theory of minimal surfaces: a
  survey.
\newblock {\em J. Aust. Math. Soc.}, 106(3):287--341, 2019.

\bibitem{AlarconForstnericLopez2019JGEA}
A.~Alarc\'{o}n, F.~Forstneri\v{c}, and F.~J. L\'{o}pez.
\newblock Every meromorphic function is the {G}auss map of a conformal minimal
  surface.
\newblock {\em J. Geom. Anal.}, 29(4):3011--3038, 2019.

\bibitem{AndristWold2014}
R.~B. Andrist and E.~F. Wold.
\newblock Riemann surfaces in {S}tein manifolds with the density property.
\newblock {\em Ann. Inst. Fourier (Grenoble)}, 64(2):681--697, 2014.

\bibitem{Beauville2011}
A.~Beauville.
\newblock Holomorphic symplectic geometry: a problem list.
\newblock In {\em Complex and differential geometry}, volume~8 of {\em Springer
  Proc. Math.}, pages 49--63. Springer, Heidelberg, 2011.

\bibitem{Boothby1962}
W.~M. Boothby.
\newblock A note on homogeneous complex contact manifolds.
\newblock {\em Proc. Amer. Math. Soc.}, 13:276--280, 1962.

\bibitem{Bryant1982JDG}
R.~L. Bryant.
\newblock Conformal and minimal immersions of compact surfaces into the
  {$4$}-sphere.
\newblock {\em J. Differential Geom.}, 17(3):455--473, 1982.

\bibitem{CieliebakEliashberg2012}
K.~Cieliebak and Y.~Eliashberg.
\newblock {\em From {S}tein to {W}einstein and back. Symplectic geometry of
  affine complex manifolds}, volume~59 of {\em American Mathematical Society
  Colloquium Publications}.
\newblock American Mathematical Society, Providence, RI, 2012.

\bibitem{Darboux1882CRAS}
G.~{Darboux}.
\newblock {Sur le probl\`eme de Pfaff.}
\newblock {\em {C. R. Acad. Sci., Paris}}, 94:835--837, 1882.

\bibitem{DrinovecForstneric2008FM}
B.~Drinovec~Drnov{\v{s}}ek and F.~Forstneri{\v{c}}.
\newblock Approximation of holomorphic mappings on strongly pseudoconvex
  domains.
\newblock {\em Forum Math.}, 20(5):817--840, 2008.

\bibitem{DrinovecForstneric2010AJM}
B.~Drinovec~Drnov{\v{s}}ek and F.~Forstneri{\v{c}}.
\newblock Strongly pseudoconvex domains as subvarieties of complex manifolds.
\newblock {\em Amer. J. Math.}, 132(2):331--360, 2010.

\bibitem{Forstneric2017E}
F.~Forstneri{\v c}.
\newblock {\em Stein manifolds and holomorphic mappings (The homotopy principle
  in complex analysis)}, volume~56 of {\em Ergebnisse der Mathematik und ihrer
  Grenzgebiete. 3. Folge. A Series of Modern Surveys in Mathematics [Results in
  Mathematics and Related Areas. 3rd Series. A Series of Modern Surveys in
  Mathematics]}.
\newblock Springer, Cham, second edition, 2017.

\bibitem{ForstnericLarusson2019CAG}
F.~Forstneri{\v{c}} and F.~L{\'a}russon.
\newblock {The parametric \(h\)-principle for minimal surfaces in
  \(\mathbb{R}^n\) and null curves in \(\mathbb{C}^n\).}
\newblock {\em {Commun. Anal. Geom.}}, 27(1):1--45, 2019.

\bibitem{Forstneric2019JAM}
F.~Forstneri\v{c}.
\newblock Proper holomorphic immersions into {S}tein manifolds with the density
  property.
\newblock {\em J. Anal. Math.}, 139(2):585--596, 2019.

\bibitem{Geiges2008}
H.~Geiges.
\newblock {\em An introduction to contact topology}, volume 109 of {\em
  Cambridge Studies in Advanced Mathematics}.
\newblock Cambridge University Press, Cambridge, 2008.

\bibitem{GrauertRemmertAS}
H.~Grauert and R.~Remmert.
\newblock {\em Analytische {S}tellenalgebren}.
\newblock Springer-Verlag, Berlin-New York, 1971.
\newblock Unter Mitarbeit von O. Riemenschneider, Die Grundlehren der
  mathematischen Wissenschaften, Band 176.

\bibitem{GriffithsHarris1994}
P.~Griffiths and J.~Harris.
\newblock {\em Principles of algebraic geometry}.
\newblock Wiley Classics Library. John Wiley \& Sons, Inc., New York, 1994.
\newblock Reprint of the 1978 original.

\bibitem{Gromov1973IZV}
M.~L. Gromov.
\newblock Convex integration of differential relations. {I}.
\newblock {\em Izv. Akad. Nauk SSSR Ser. Mat.}, 37:329--343, 1973.

\bibitem{KalimanKutzschebauch2010IM}
S.~Kaliman and F.~Kutzschebauch.
\newblock Algebraic volume density property of affine algebraic manifolds.
\newblock {\em Invent. Math.}, 181(3):605--647, 2010.

\bibitem{KalimanKutzschebauch2015}
S.~Kaliman and F.~Kutzschebauch.
\newblock On the density and the volume density property.
\newblock In {\em Complex analysis and geometry}, volume 144 of {\em Springer
  Proc. Math. Stat.}, pages 175--186. Springer, Tokyo, 2015.

\bibitem{LeBrun1995IJM}
C.~LeBrun.
\newblock Fano manifolds, contact structures, and quaternionic geometry.
\newblock {\em Internat. J. Math.}, 6(3):419--437, 1995.

\bibitem{LeBrunSalamon1994IM}
C.~LeBrun and S.~Salamon.
\newblock Strong rigidity of positive quaternion-{K}\"ahler manifolds.
\newblock {\em Invent. Math.}, 118(1):109--132, 1994.

\bibitem{Leiterer1990}
J.~{Leiterer}.
\newblock {Holomorphic vector bundles and the Oka-Grauert principle}.
\newblock In {\em Several complex variables. IV. Algebraic aspects of complex
  analysis}, Encycl. Math. Sci., Vol. 10, pages 63--103. Springer-Verlag, 1990.
\newblock Translation from {\em Itogi Nauki Tekh., Ser. Sovrem. Probl. Mat.,
  Fundam. Napravleniya} 10, 75-121 (1986). Translated by D. N. Akhiezer.

\bibitem{Moser1965TAMS}
J.~Moser.
\newblock On the volume elements on a manifold.
\newblock {\em Trans. Amer. Math. Soc.}, 120:286--294, 1965.

\bibitem{Spring2010}
D.~Spring.
\newblock {\em Convex integration theory}.
\newblock Modern Birkh\"auser Classics. Birkh\"auser/Springer Basel AG, Basel,
  2010.
\newblock Solutions to the $h$-principle in geometry and topology, Reprint of
  the 1998 edition [MR1488424].

\bibitem{Stout1965}
E.~L. Stout.
\newblock Bounded holomorphic functions on finite {R}iemann surfaces.
\newblock {\em Trans. Amer. Math. Soc.}, 120:255--285, 1965.

\bibitem{Varolin2000}
D.~Varolin.
\newblock The density property for complex manifolds and geometric structures.
  {II}.
\newblock {\em Internat. J. Math.}, 11(6):837--847, 2000.

\bibitem{Varolin2001}
D.~Varolin.
\newblock The density property for complex manifolds and geometric structures.
\newblock {\em J. Geom. Anal.}, 11(1):135--160, 2001.

\end{thebibliography}
%


\vskip 5mm

\noindent Franc Forstneri\v c \\
\noindent Faculty of Mathematics and Physics, University of Ljubljana, Jadranska 19, SI--1000 Ljubljana, Slovenia\\
\noindent Institute of Mathematics, Physics and Mechanics, Jadranska 19, SI--1000 Ljubljana, Slovenia\\
\noindent e-mail: {\tt franc.forstneric@fmf.uni-lj.si}

\vskip 4mm

\noindent Finnur L\'arusson \\
\noindent School of Mathematical Sciences, University of Adelaide, Adelaide SA 5005, Australia \\
\noindent e-mail:  {\tt finnur.larusson@adelaide.edu.au}

\end{document}